\documentclass[11pt]{article}
\usepackage{amsmath,amsfonts,amsthm,amssymb,amscd,colordvi}
\usepackage{authblk}

\usepackage{color}
\usepackage{comment}

\newcommand{\p}{\partial}

\newcommand{\ga}{\gamma}

\newcommand{\si}{\sigma}

\newcommand{\R}{{\mathbb R}}
\newcommand{\F}{{\mathbb F}}

\newcommand{\Z}{{\mathbb Z}}

\newcommand{\N}{{\mathbb N}}

\newcommand{\cC}{{\cal C}}
 
\newcommand{\cF}{{\cal F}}

\newcommand{\cI}{\mathcal I}

\newcommand{\cL}{{\cal L}}

\newcommand{\supp}{\mathop{\rm supp}\nolimits}

\newcommand{\de}{\delta}

\newcommand{\NN}{{\mathbb N \cup \{0\}}}
\newcommand{\II}{\mathcal I}
\newcommand{\lappr}{\lesssim}
\newcommand{\dd}{d_1}

\def\12{\tfrac12}

\def\lan{\langle}
\def\ran{\rangle}

\def\eps{\varepsilon}
\theoremstyle{plain}
\newtheorem{theorem}{Theorem}[section]
\newtheorem{lemma}[theorem]{Lemma}

\newtheorem{proposition}[theorem]{Proposition}
\newtheorem{corollary}[theorem]{Corollary}
\theoremstyle{definition}

\theoremstyle{remark}

\newcommand{\be}{\begin{equation}}
\newcommand{\ee}{\end{equation}}
\newcommand{\lbl}{\label}
\newcommand{\non}{\nonumber}

\newcommand{\qu}{\quad}
\newcommand{\qmb}{\quad\mbox}

\newcommand{\fr}{\frac}

\newcommand{\x}{\mathbf  x}

\newcommand{\z}{\mathbf z}
\newcommand{\vu}{\mathbf u}
\newcommand{\vv}{\mathbf v}

\newcommand{\zz}{\frak z}

\newcommand{\vb}{\mathbf b}
\newcommand{\vc}{\mathbf  c}
\newcommand{\vw}{\mathbf  w}
\newcommand{\va}{\mathbf  a}
\newcommand{\vl}{\mathbf  l}

\newcommand{\ds}{\displaystyle}

\setcounter{section}{0}
\numberwithin{equation}{section}

\setcounter{tocdepth}{1}

\title{A refinement of Heath-Brown's theorem on quadratic forms}
\author[1]{Andrey  Dymov}
\author[2]{Sergei Kuksin}
\author[3]{Alberto Maiocchi}
\author[4]{Sergei  Vl\u adu\c t}
\affil[1]{Steklov Mathematical Institute of RAS, Moscow, Russia \\
	\& National Research University Higher School of
	Economics, Russia \\
		\& Skolkovo Institute of Science and Technology, Skolkovo, Russia\thanks{dymov@mi-ras.ru}} 

\affil[2]{Institut de Math\'emathiques de Jussieu--Paris Rive Gauche, CNRS, Universit\'e Paris Diderot, UMR 7586, Sorbonne Paris Cit\'e, F-75013, Paris, France \&  Peoples' Friendship University of Russia (RUDN University),
	6 Miklukho-Maklaya St, Moscow, 117198, Russia}

\affil[3]{Università degli Studi di Milano--Bicocca, Italy \thanks{alberto.maiocchi@unimib.it}}

\affil[4]{Aix Marseille Universit\'e, CNRS, Centrale Marseille, I2M UMR 7373, 13453, Marseille,
	France and IITP RAS, 19 B. Karetnyi, Moscow, Russia \thanks{serge.vladuts@univ-amu.fr}}

\date{}

\begin{document}

\maketitle
\begin{abstract}

In his paper from 1996 on quadratic forms Heath-Brown developed a version of the 
circle method to count points in the intersection of an unbounded quadric with a 
lattice of short period,  if each point is given a weight, and approximated this quantity 
 by the integral of the weight function against  a measure on the quadric. 
 The weight function is assumed to be $C_0^\infty$--smooth and vanish near the singularity of the 
quadric. In our work we allow the weight function to be finitely smooth, not vanish at the singularity  and  have  an explicit decay 
at infinity.

The paper uses only elementary  number theory and is available to readers without a number-theoretical
background.

\end{abstract}
\section{Introduction}

\subsection{Setting and result} \label{s_1.1}

Let us consider a  non-degenerate quadratic form with integer coefficients on $\R^d$,  $d\geq 4$,
\be\label{FA}
F(\z) = \tfrac12 A \z\cdot \z\ , 
\ee
  which implies that $A$ can be chosen as a non--degenerate 
symmetric matrix with integer elements whose  diagonal elements are even.  If $F$ is sign--definite, then 
for $t\in \R$  the quadric 
\be\label{st}
\Sigma_t =\{ \z: F^t(\z)=0\}, \quad  F^t := F-t, 
\ee
is either an ellipsoid or an empty set, while in the non sign--definite
case $\Sigma_t$ is an unbounded hyper-surface in $\R^d$, which is
smooth if $t\ne0$, while $\Sigma_0$ is a cone and  has a 
  singular point at
zero.  

Let $\Z^d_L$ be the lattice of a small period $L^{-1}$,
$$
\Z^d_L =L^{-1} \Z^d, \qquad    L\ge 1,
$$
and let $w$ be a {\it  regular } real function on $\R^d$ which means that $w$ and its Fourier transform $\hat w(\xi)$ are continuous functions which 
 decay at infinity {  sufficiently fast:
\be\label{regular}
|w(\z)| \le C  |\z|^{-d-\gamma}\ ,\quad\forall \z\in
  \R^d\ , \qquad |\hat w( \mathbf \xi)| \le C
|\xi|^{-d-\gamma}\ , \quad\forall \xi\in \R^d\ , 
\ee
for some $\gamma$, $C>0$, where $|\cdot|$ denotes the Euclidean
  norm. }Our goal is to study the behaviour of  series 
$$
N_L(w; A, m):=  \sum_{\z \in \Sigma_m \cap \Z^d_L} 
      w(\z)\,, 
$$
where $m\in \R$ is such that   ${L^2m}$ is an integer.\footnote{E.g., $m=0$ -- this case is the most important for us.} Let
$$
w_L(\z) := w(\z/L).
$$
Then, obviously,  
\be\label{obvious}
N_L(w{ ; A, m}) = N_1(w_L{ ;A, L^2m}) =: N(w_L{ ;A, L^2m}) \,.  
\ee
We will also write $N_L(w;A):=N_L(w;A,0)$ and $N(w_L;A):= N(w_L;A,0)$.
To study $N_L(w{ ;A, {m}})$ we closely follow the circle method in the form, given to it by  Heath-Brown in \cite{HB}. 
Our notation differs a bit from that in  \cite{HB}. Namely, under the scaling 
$z=z'/L$, $z'\in \Z^d$, we count (with weights) solutions of equation
$F(z') = mL^2$, 
$z'\in \Z^d,$
while Heath-Brown writes the equation as
$
F(z') = m$, $z'\in \Z^d,
$
so that his $m$ corresponds to our $L^2m$.

 We  start with a key theorem which expresses  the analogue of Dirac's
delta function on integers, i.e.  the function 
 $\delta:\Z\to\R$ such that
\begin{equation*}
  \delta(n):= \left\{ \begin{array}{cc}
    1 & \mbox{for } n=0\\
    0 & \mbox{for } n\neq 0 \end{array} \right. \,,
\end{equation*}
through a sort of Fourier representation. This result goes back at least  to Duke, Friedlander and Iwaniec \cite{DFI} (cf. also \cite{I}) , and we  state it in the form, given in
 \cite[Theorem~1]{HB}; basically, it replaces (a major arc decomposition of) the trivial identity
$\delta(n)= \int_0^1 e^{2\pi i\alpha n}d\alpha $
employed in the usual circle method. In
the theorem below for  $q\in { \N}$ we denote by  $e_q$ the exponential function  $e_q(x):= e^{\tfrac{2\pi i x}{q}}$, and 
denote by ${\sum\limits_{a(\operatorname{mod}
	q)}}^* $ the summation over residues $a$ with
$(a,q)=1$, i.e., over all integers  $a\in [1, q-1]$, relatively prime with $q$. 
  
\begin{theorem}

\label{th:1}
  For any $Q\geq1$, there exists $c_Q>0$ and a smooth function 
  $ h(x,y): \R_{>0}\times
  \R\to \R$,  such that
  \begin{equation}\label{gl}
    \delta(n) = c_QQ^{-2}\sum_{q=1}^{\infty}{\sum_{a(\!\!\!\!\!\!\mod
        q)}}^* e_q(an) h\left(\frac qQ,\frac n{Q^2}\right)\,.
  \end{equation}
  The constant $c_Q$ satisfies
  $\,
c_Q=1+O_N(Q^{-N})\,
$
for any $N>0$, while $h$ is such that 
$h(x,y)\le c/x$ and  $h(x,y)=0$ for $x>\max(1,2|y|)$ {  (so for each $n$ the sum in \eqref{gl} contains finitely many non zero terms). }
\end{theorem}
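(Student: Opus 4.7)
The plan is to build $h$ from a single fixed smooth weight and to verify \eqref{gl} by a bare-hands regrouping of the exponential sums, with no number-theoretic input beyond the orthogonality of additive characters mod $m$.

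First I would fix a non-negative $w\in C_0^\infty(\R)$ supported in $[1/2,1]$ with $\int_0^\infty w(x)\,dx=1$, and set
\[
h(x,y) \;:=\; \sum_{j=1}^\infty \frac{1}{xj}\Bigl(w(xj)-w\bigl(|y|/(xj)\bigr)\Bigr), \qquad x>0,\ y\in\R.
\]
The support of $w$ makes the series locally finite in $(x,y)$, so $h\in C^\infty$ on $\{x>0\}$. A short count of the contributing indices $j$ (splitting into the regimes $x\le 1$ vs.\ $x>1$, and $|y|\le x/2$ vs.\ $|y|>x/2$ for the second summand) gives the pointwise bound $h(x,y)\le c/x$. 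The vanishing once $x>\max(1,2|y|)$ is immediate: $x>1$ forces $j<1$ in the first term, while $x>2|y|$ forces $|y|/(xj)<1/2$ in the second, so both summands vanish.

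Next I would insert $h$ into the candidate right-hand side (without the prefactor $c_Q$) and compute
\[
S_Q(n) \;:=\; Q^{-2}\sum_{q=1}^{\infty}{\sum_{a(q)}}^{\!*}\, e_q(an)\, h(q/Q,\, n/Q^2).
\]
Substituting the definition of $h$, collecting $(q,j)$ into $m:=qj$, and interchanging sums yields
\[
S_Q(n) \;=\; Q^{-1}\sum_{m=1}^\infty \frac{1}{m}\Bigl(\sum_{q\mid m}{\sum_{a(q)}}^{\!*}\, e_q(an)\Bigr)\bigl(w(m/Q)-w(|n|/(mQ))\bigr).
\]
The bracketed inner sum is the elementary additive-character identity obtained by sorting the residues $a/m\bmod 1$ by their reduced denominator, namely $\sum_{a=1}^{m} e^{2\pi i an/m}=m\cdot \mathbf{1}[m\mid n]$, so the $1/m$ and the $m$ cancel and only $m$'s dividing $n$ (all $m$, if $n=0$) survive.

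For $n\ne 0$ the involution $m\leftrightarrow |n|/m$ on the positive divisors of $|n|$ matches the summands $w(m/Q)$ and $w(|n|/(mQ))$ term-by-term, so $S_Q(n)=0$. For $n=0$ the second term vanishes because $w(0)=0$, leaving $S_Q(0)=Q^{-1}\sum_{m\ge 1}w(m/Q)$; Poisson summation (equivalently Euler--Maclaurin applied to the smooth compactly supported $w$) then gives $S_Q(0)=\int_0^\infty w\,dx+O_N(Q^{-N})=1+O_N(Q^{-N})$. Setting $c_Q:=1/S_Q(0)$ delivers both \eqref{gl} and the stated estimate on $c_Q$. The only genuine content is the regrouping $(q,j)\mapsto m$ together with the recognition of the inner sum as the elementary character identity above; the size and support claims on $h$ are essentially built into its definition.
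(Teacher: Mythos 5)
The paper itself does not prove Theorem~\ref{th:1}: it only defines $h$ via \eqref{eq:or_weight}--\eqref{eq:h} and defers the verification of the identity to Section~3 of \cite{HB}. Your reconstruction is correct and is exactly that argument: the same $h=h_1-h_2$ built from a bump on $(1/2,1)$ of unit mass, the regrouping $(q,j)\mapsto m=qj$, the elementary identity $\sum_{q\mid m}{\sum_{a(q)}}^* e_q(an)=m\,\mathbf 1[m\mid n]$ obtained by sorting reduced fractions, the divisor involution $m\leftrightarrow |n|/m$ killing the $n\ne 0$ terms, and Poisson/Euler--Maclaurin giving $S_Q(0)=1+O_N(Q^{-N})$, whence $c_Q:=1/S_Q(0)$.
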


Since for any function $\tilde w$ on $\R^d$ the quantity
$N(\tilde w;A, t)$ may be written as
$
\sum_{\z\in\Z^d} \tilde w(\z) \delta (F^t(\z)),
$
then Theorem~\ref{th:1} allows to represent the series $N(\tilde w;A, t)$ as an iterated sum. Transforming that sum  further using the Poisson summation 
formula as in \cite[Theorem~2]{HB} we arrive at the following result:\,\footnote{In \cite{HB} the result below is stated for $\tilde w\in C_0^\infty$. But the argument there,
based on the Poisson summation,  applies as well to regular functions $\tilde w$.}

\begin{theorem}[Theorem 2 of \cite{HB}]\label{th:2}
  For any regular function $\tilde w$, any $t$ and any 
  $Q\geq 1$ 
  we have the expression
  \begin{equation}\label{eq:sum}
    N({\tilde w}{ ; A, t})=c_QQ^{-2}\sum_{\vc\in
      \Z^{d}}\sum_{q=1}^\infty q^{- d} 
    S_q(\vc) I^0_q(\vc)\,,
  \end{equation}
  with
  \begin{equation}\label{eq:S}  
 S_q(\vc)=S_q(\vc;A,t) :={\sum_{a(\!\!\!\!\!\!\mod
        q)}}^* \sum_{\vb(\!\!\!\!\!\!\mod q)} e_q(a{ F^t}(\vb) + \vc\cdot \vb)    
  \end{equation}
and
\begin{equation}\lbl{I^0_q}  
I^0_q(\vc)=I^0_q(\vc; A,t,Q) := \int_{\R^{d}} {\tilde w}(\z)h\left(\frac qQ,
\frac{{ F^t}(\z)}{Q^2}\right) e_q(-\z\cdot \vc)
\,d\z\,.
\end{equation}
\end{theorem}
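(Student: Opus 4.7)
The plan is to follow the Kloosterman-type argument of \cite{HB}: insert the arithmetic delta identity of Theorem \ref{th:1}, organize the resulting sum by residue classes modulo $q$, and apply Poisson summation to the inner lattice sum. The $C_0^\infty$ assumption of \cite{HB} enters only at this last step, and the two bounds in \eqref{regular} are arranged precisely to make it survive for merely regular $\tilde w$.

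First, since $A$ has integer entries with even diagonal and (under the implicit hypothesis that $t=L^2m\in\Z$) $F^t(\z)$ is an integer for every $\z\in\Z^d$, I would write
\[
N(\tilde w;A,t)=\sum_{\z\in\Z^d}\tilde w(\z)\,\delta\bigl(F^t(\z)\bigr)
\]
and substitute \eqref{gl}, obtaining a triple sum over $q\geq1$, coprime residues $a$ modulo $q$, and $\z\in\Z^d$. For each fixed $\z$ only finitely many $q$ contribute, by the support condition on $h$, and the decay in \eqref{regular} dominates the $\z$-sum uniformly in $q$, so all rearrangements are legitimate. I would then split $\z=\vb+q\mathbf m$ with $\vb$ running over residues modulo $q$ and $\mathbf m\in\Z^d$. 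A direct expansion using $A=A^\top$ with integer entries and even diagonal shows
\[
F^t(\vb+q\mathbf m)\equiv F^t(\vb)\pmod q,
\]
so the factor $e_q\bigl(aF^t(\z)\bigr)$ depends only on $\vb$ and detaches from the $\mathbf m$-sum.

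At this point I would apply Poisson summation to the remaining $\mathbf m$-sum, whose summand is $\tilde w(\vb+q\mathbf m)\,h\bigl(q/Q,F^t(\vb+q\mathbf m)/Q^2\bigr)$. Changing variables $\z=\vb+q\mathbf m$ converts the $\vc$-th Fourier coefficient into $q^{-d}e_q(\vc\cdot\vb)\,I^0_q(\vc;A,t,Q)$, with $I^0_q$ as in \eqref{I^0_q}. Executing the $\vb$- and $a$-sums then collapses them into the exponential sum $S_q(\vc;A,t)$ of \eqref{eq:S}, producing exactly the representation \eqref{eq:sum}.

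The main technical point is justifying Poisson summation under the mere regularity \eqref{regular} rather than $C_0^\infty$ compact support. The two bounds in \eqref{regular} are exactly what is needed: the spatial decay of $\tilde w$ places $g_{\vb,q}(\mathbf m):=\tilde w(\vb+q\mathbf m)\,h(q/Q,F^t(\vb+q\mathbf m)/Q^2)$ into $L^1(\R^d)$, while the decay of $\hat{\tilde w}$, combined with the smoothness of $h$ propagated through the chain rule applied to $F^t/Q^2$, does the same for its Fourier transform. This is the only substantive deviation from \cite{HB} and is the content of the footnote.
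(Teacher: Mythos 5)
Your proof is correct and follows essentially the same route as the paper, which states this result as Theorem~2 of \cite{HB} and merely remarks in a footnote that the Poisson-summation argument of \cite{HB} (delta identity \eqref{gl}, splitting into residue classes modulo $q$, Poisson summation on each class) extends from $C_0^\infty$ weights to regular ones in the sense of \eqref{regular}. The one point where your sketch is schematic---establishing enough decay of the Fourier transform of the product $\tilde w(\z)\,h\bigl(q/Q,F^t(\z)/Q^2\bigr)$ to legitimise Poisson summation, which is not a straightforward convolution bound since $h\bigl(q/Q,F^t(\cdot)/Q^2\bigr)$ is not integrable---is treated at exactly the same level of detail (i.e., asserted without proof) in the paper.
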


We will apply  Theorem~\ref{th:2} to examine  for large $L$ the sum
$N(w_L{ ;A, L^2 m})$ $ = N_L(w{ ;A, m})$,  choosing 
  $\tilde w=w_L$, $t=L^2m$ and $Q=L\geq1$ and estimating  
explicitly the leading terms in $L$ of $S_q(\vc)$ and $I^0_q(\vc)$ as well as the remainders. The answer will be given in terms of  the 
 integral
\be\label{lead_term}
\sigma_{\infty}(w)=
\sigma_{\infty}(w; A, { t}) = \int_{ \Sigma_t}
w(\z)\,\mu^{ \Sigma_t}(d\z)\ 
\ee
(which is  singular if $t=0$). Here 
$
\mu^{\Sigma_t}(d\z)=|\nabla F(\z) |^{-1}dz|_{\Sigma_t}= |A\z|^{-1}dz|_{\Sigma_t},
$
with
$dz|_{\Sigma_t}$ 
representing the volume element over $\Sigma_t$, induced from the standard euclidean structure on $\R^{d}$,
and $A$ the symmetric matrix in \eqref{FA}. 
For regular functions $w$ this integral converges (see Section~\ref{sec:7}).

To write down the asymptotic for $N_L(w{ ;A, m})$ we will need the following quantities, where $p$ ranges over all primes and $c\in\Z^d$:
\begin{equation}\label{eq:sigma_p}
\sigma_p^\vc=\sigma_p^\vc(A,{L^2m}):=\sum_{l=0}^\infty p^{-dl}S_{p^l}(\vc; A,{L^2m}),
\qquad \sigma_p:=\sigma_p^{\mathbf 0},
\end{equation}
where $S_1\equiv 1$,
$$
\sigma^*_\vc(A):=\prod_p (1-p^{-1})\sigma_p^\vc(A,0), \qquad \sigma^*(A):=\sigma^*_{\mathbf 0}(A)=\prod_p(1-p^{-1}) \sigma_p(A,0),
$$ 
and 
\be\label{77}
\sigma(A,{L^2m}) =\prod_p\sigma_p^{\mathbf 0}(A,{L^2m})= \prod_p\sigma_p(A,{L^2m}).
\ee
The products in the formulas above are taken over all primes.
 In the asymptotics, where these quantities are used, they are bounded
 uniformly in $L$ (see Theorems~\ref{th:3} and \ref{th:3-4}, as well
 as Proposition~\ref{p15}).

Everywhere  below for  a function $f\in C^k(\R^d)$ we denote 
$$
\| f\|_{n_1, n_2}  = \sup_{\z\in \R^d} \max_{\left|\alpha\right|_1\le n_1} |\p^\alpha
f(\z)| \lan \z\ran^{n_2}\,,
$$
where   $n_1 \in \NN$, $n_1\le k,$ and  $ n_2\in  \R$.   Here
\begin{equation*}
  \langle \x\rangle := \max\{1,|\x|\} \quad \text{for}\;\: \x\in \R^l, \qu l\in \N,
\end{equation*}
and  $|\alpha|_1 \equiv \sum\alpha_j $ for any integer
vector 
$
\alpha \in (\N\cup\{0\})^d.
$
By  $\cC^{n_1,n_2}(\R^d)$ we denote  a linear space of
$C^{n_1}$-smooth functions $f:\R^d\to \R$, satisfying
$\|f\|_{n_1,n_2}<\infty$. 

{ 
 Note that if $w\in \cC^{d+1,d+1}( \R^d)$ then the function $w$ is regular, so Theorem~\ref{th:2} applies. Indeed, the first relation in \eqref{regular} is obvious. To
prove the second note that for any integer vector 
$
\alpha \in (\N\cup\{0\})^d,
$
$
\xi^\alpha \hat w(\xi) = \big( \frac{i}{2\pi}\big)^{\left|\alpha\right|_1} \widehat{
\p_\x^\alpha w}(\xi).
$
But if 
$
\left|\alpha\right|_1  \le d+1,
$
then $|\p_\x^\alpha w| \le C \lan \x\ran^{-d-1}$, so $\p_\x^\alpha w$ is an $L_1$-function. Thus its Fourier transform 
 $ \widehat{\p_\x^\alpha w}$ is a bounded continuous function for each 
$\left|\alpha\right|_1 \le d+1$ and the second relation in \eqref{regular} also holds. }

Now we formulate our main results. 
First we treat the case $d\geq 5$.

\begin{theorem}\label{th:3}
 Assume that  $d\ge5$. Then for    any $0<\eps\le1$
    there exist positive constants $K_1(d,\eps)$, $K_2(d,\eps)$ and
    $K_3(d,\eps)$, with $K_2(d,\eps)\le K_3(d,\eps)$,
  such that  if  $w \in \cC^{K_1,K_2}(\R^{d})\cap \cC^{0,K_3}(\R^{d})$ and a real number $m$ satisfies ${L^2m} \in \Z$,  then 
  \begin{equation}\label{m_result}
  \big|   N_L(w{ ; A, m})-
  \sigma_\infty(w)\sigma(A,    L^2m) L^{d-2} \big|  \le C 
    L^{d/2+\eps}\left(\|w\|_{K_1,K_2}+\|w\|_{0,K_3}\right),
  \end{equation}
  where the constant $C$ depends on $d, \eps$, $m$ and  $A$. 
 The constant $\sigma(A,L^2m)$ is bounded uniformly in $L$ and $m$.
  In particular if  $\eps=1/2$, then one can take  $K_1= 2d(d^2+d-1)$, $K_2=4(d+1)^2+3d+1$ and $K_3=K_1+3d+4$.
\end{theorem}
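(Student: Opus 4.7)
The plan is to apply Theorem~\ref{th:2} with $\tilde w = w_L$, $Q = L$ and $t = L^2 m$, then to extract the leading $L$--asymptotics from $S_q(\vc)$ and $I^0_q(\vc)$ while controlling all remainders. The first step is to rescale $\z = L\z'$ in \eqref{I^0_q}; using $w_L(L\z') = w(\z')$ and $F^{L^2 m}(L\z') = L^2(F(\z')-m)$, this yields
\be\label{Iresc}
I^0_q(\vc;A,L^2m,L) = L^d \int_{\R^d} w(\z')\, h\!\left(\tfrac qL,\, F(\z')-m\right) e^{-2\pi i L\,\z'\cdot \vc/q}\, d\z',
\ee
and since $h(x,y) = 0$ unless $x\le \max(1,2|y|)$, only $q \lesssim L$ contribute effectively once the decay of $w$ is used. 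Combined with the prefactor $c_L L^{-2}$ from \eqref{eq:sum}, this produces the $L^{d-2}$ scale of the main term.

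Next I would isolate the diagonal $\vc = \mathbf 0$. Applying the coarea formula,
\be
\int_{\R^d} w(\z')\, h\!\left(\tfrac qL, F(\z')-m\right) d\z' \;=\; \int_{\R} h\!\left(\tfrac qL, s-m\right) \varphi_w(s)\, ds, \qquad \varphi_w(s) := \int_{\Sigma_s} w\, \mu^{\Sigma_s},
\ee
with $\varphi_w$ continuous in $s$ by \eqref{regular} and non-degeneracy of $F$ off the apex; the neighbourhood of $\z' = \mathbf 0$ is handled separately by $\|w\|_{0,K_3}$, exploiting the large polynomial decay. The approximate-delta property of $h$ built into Theorem~\ref{th:1} reduces each such integral to $\sigma_\infty(w) + O(L^{-\eps'})$ uniformly in $q$, and the series $c_L\sum_q q^{-d} S_q(\mathbf 0;A,L^2m)$ factorises into the Euler product $\sigma(A,L^2m) = \prod_p \sigma_p$ by multiplicativity of $S_q$. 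For $d\ge5$ this product converges and is uniformly bounded in $L,m$ by the classical estimates on $\sigma_p$, yielding the asserted main term $\sigma_\infty(w)\sigma(A,L^2m) L^{d-2}$.

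For the off-diagonal $\vc \neq \mathbf 0$ I would combine two standard tools. First, $N$-fold integration by parts in \eqref{Iresc} against the oscillatory exponential gains a factor $(q/(L|\vc|))^N$ at the cost of $N$ derivatives of $w$ controlled with polynomial decay $\lan\z'\ran^{-K_2}$; this is where $K_1$ and $K_2$ enter. Second, Heath-Brown's Kloosterman-type bound $|S_q(\vc;A,L^2m)|\lesssim q^{(d+1)/2+\eps}$ (refined by suitable $\gcd$-factors when necessary) controls the exponential sum. Substituting both bounds into \eqref{eq:sum} and summing first over $\vc$ (convergent once $N$ is chosen large enough relative to $d$) and then over $q \lesssim L$ produces the claimed error $L^{d/2+\eps}(\|w\|_{K_1,K_2} + \|w\|_{0,K_3})$.

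The main obstacle is the precise balancing of the three parameters $N$, $K_2$, and the exponent in the $S_q$-bound, so that the $q$- and $\vc$-sums close \emph{and} the remainder is genuinely smaller than $L^{d-2}$ by the required factor $L^{d/2 - 2 - \eps}$; the explicit values of $K_1,K_2,K_3$ emerge from this optimisation, and the independent hypothesis $w\in \cC^{0,K_3}$ with large $K_3$ is needed to absorb the singular behaviour of $\mu^{\Sigma_0}$ near the cone point when $m = 0$, where the smoothness-only estimates are insufficient.
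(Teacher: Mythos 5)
Your overall architecture — apply Theorem~\ref{th:2} with $Q=L$, rescale, use the coarea formula and the approximate-delta property of $h$ for the $\vc=\mathbf{0}$ term, and control the off-diagonal terms by oscillatory estimates together with the $S_q$ bound — matches the paper's scheme in Sections~\ref{sec:2} through~\ref{sec:6}. The treatment of $J_0$ (coarea, smoothness of $t\mapsto\sigma_\infty(w;A,t)$, multiplicativity of $S_q$ giving the Euler product) is essentially correct in outline.

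However, there is a genuine gap in the off-diagonal case. For $\vc\neq\mathbf{0}$ you rely on two inputs: $N$-fold integration by parts against the linear phase $e_q(-\z\cdot\vc L)$, gaining a factor $|\vc|^{-N}$ (derivatives of $h$ lose back the $(q/L)^N$ so no extra power of $L$ survives), and a bound $|S_q(\vc)|\lesssim q^{d/2+1}$ (or your slightly sharper $q^{(d+1)/2+\eps}$). Both together give, for $|\vc|$ small (say $|\vc|=1$) and $q\lesssim L$,
\[
c_L L^{-2}\sum_{q}q^{-d}|S_q(\vc)\,I_q(\vc)|\;\lesssim\; L^{-2}\sum_q q^{-d}\cdot q^{d/2+1}\cdot \frac{L^{d+1}}{q}\;\lesssim\; L^{d-1},
\]
and the $\vc$-sum does nothing further for $|\vc|=O(1)$. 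Since $d-1 > d/2+\eps$ for all $d\ge 5$, $\eps\le1$, this error dominates the claimed $L^{d/2+\eps}$ and even competes with the main term $L^{d-2}$. Integration by parts in the linear phase is sufficient only to truncate the $\vc$-sum to $|\vc|\le L^{\gamma_1}$; it cannot close the estimate for small nonzero $\vc$. What is missing is the \emph{stationary-phase} gain $|\vu|^{-d/2}$ (with $\vu=\vc L/q$) in the $\z$-integral, which exploits the nondegenerate quadratic phase $t F(\z)-\vu\cdot\z$ after expressing $h$ through the inverse Fourier transform of $p(t)$. This is precisely what Section~\ref{sec:6} of the paper does (the decomposition into the good set $S_R$, treated by nonstationary phase, and the bad set $S_R^c$, treated by a trivial bound on a small region), leading to Lemma~\ref{l:22}: $|I_q(\vc)|\lesssim L^{d/2+1+\gamma_1}q^{d/2-1}\|w\|$, where the exponent $d/2$ (instead of $d$) is the crucial improvement. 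Without this ingredient the bound on $J_<^{\gamma_1}$ does not close.

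A smaller inaccuracy: you attribute the role of the auxiliary norm $\|w\|_{0,K_3}$ to absorbing the cone singularity of $\mu^{\Sigma_0}$. In the paper that norm actually arises from the bad-set integral in Lemma~\ref{l:bad_set}, where polynomial weight $\lan\va\ran^{-K(d,\beta)}$ with $K(d,\beta)\sim d^2/(2\beta)$ is needed to bound the volume of the set where the phase is nearly stationary. The cone singularity is handled by the separate Section~\ref{sec:7} analysis of $\II(t)$, and feeds into $J_0$ through Proposition~\ref{l:I_q(0)=}, not through the $K_3$-norm in $J_<^{\gamma_1}$.
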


Next we study the case $d=4$, restricting ourselves for the situation when $m=0$.
\begin{theorem}\label{th:3-4} Assume that  $d=4$ and $m=0$.  Then for    any $0<\eps < 1/5$
    there exist positive constants $K_1(\eps)$ and  $K_2(\eps) $, 
  such that for  $w \in \cC^{K_1,K_2}(\R^d)$
  \begin{equation}\label{m_result-4}
    \begin{split}
      \bigl|   N_L(w ;A,0 )-
\eta(0)  \sigma_\infty(w)\sigma^*(A) L^{d-2}\log L &-
\sigma_1(w;A,L)L^{d-2} \bigr| \\
&\le  C_0 
    L^{d-2-\eps}\|w\|_{K_1,K_2},
    \end{split}
  \end{equation}
  where the constant $C_0$ depends on $ \eps$ and $A$. 
  The constant $\eta(0)$ is 1 if the determinant $\det  A$ is a square of
  an integer and is 0 otherwise.
   The $L$-independent constant $\sigma^*(A)$ is finite while the constant 
   $\sigma_1$ satisfies
   $$
   |\sigma_1(w;A,L)| \leq C_0 \|w\|_{K_1,K_2}
   $$
   uniformly in $L$.
 	   In the case of a square determinant $\det A$, when $\eta(0)=1$, it is given by  \eqref{def:sigma_1}. 
 	In the case of a non--square determinant $\det A$, when $\eta(0)=0$ and the term
  $\sigma_1(w;A,L)L^{d-2}$ gives the asymptotic of the sum $N_L$, the constant $\sigma_1(w;A,L)$ does not depend on $L$ and
    has  the form  
  \be\lbl{non-sq}
  \sigma_1(w;A)=
  \sigma_\infty(w)L(1,\chi)\prod_p(1-\chi(p)p^{-1})\sigma_p(A,0)  \,,
  \ee
  where  $\chi$ is the Jacobi symbol $(\tfrac{\det(A)}{*})$
  and $L(1,\chi)$ is the Dirichlet $L$--function.\footnote{Concerning the classical notion of the Jacobi symbol and the Dirichlet $L$-function we refer a reader without 
 number-theoretical background e.g. to \cite{Se} and  \cite{Ka}.}
\end{theorem}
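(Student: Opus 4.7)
The plan is to apply Theorem~\ref{th:2} with $\tilde w = w_L$, $Q = L$ and $t = 0$, exactly as in the proof of Theorem~\ref{th:3}, and then to analyze separately the contributions of the \emph{zero mode} $\vc = \mathbf{0}$ and of $\vc \neq \mathbf{0}$ to the sum \eqref{eq:sum}. The new feature compared to $d\ge 5$ is that in the critical dimension $d=4$ the singular series $\sum_q q^{-d} S_q(\mathbf{0})$ is no longer absolutely convergent: the classical Gauss-sum bound $|S_q(\mathbf{0})| \le C_\eps\, q^{d/2+1+\eps}$ gives a generic term of size $q^{-1+\eps}$, so the partial sums grow logarithmically. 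The cutoff $h(q/L,\cdot)=0$ for $q>2L$ turns this near-divergence into a $\log L$ factor when $\chi = (\tfrac{\det A}{\cdot})$ is trivial (i.e.\ $\det A$ is a square), whereas in the non-square case the cancellations encoded by $\chi$ keep the sum bounded uniformly in $L$.

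For the zero-mode I would first rewrite
\[
I^0_q(\mathbf{0}) = L^d \int_{\R^d} w(\z)\, h\!\left(\tfrac{q}{L}, F(\z)\right)\, d\z
\]
via the change of variables $\z \mapsto \z/L$, and then apply the coarea formula $\int f(F(\z))g(\z)\,d\z = \int f(t)\Phi(t)\,dt$ with $\Phi(t) := \int_{\Sigma_t} w\, \mu^{\Sigma_t}$ to decompose $I^0_q(\mathbf{0}) = L^d\, \sigma_\infty(w)\, J(q/L) + R_q(w)$, where $J(x) := \int h(x,y)\,dy$ and $R_q(w)$ captures the variation of $\Phi$ near $t=0$. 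The remainder is controlled using the regularity of $w$ together with a careful analysis of $\mu^{\Sigma_t}$ near the cone $\Sigma_0$, along the lines already needed in the proof of Theorem~\ref{th:3}. The candidate main contribution from the zero mode is then
\[
c_L\, L^{d-2}\, \sigma_\infty(w)\, T(L), \qquad T(L) := \sum_{q=1}^{\infty} q^{-d}\, S_q(\mathbf{0})\, J(q/L),
\]
and the whole point is to pin down $T(L)$ in this critical regime.

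For this I would exploit the multiplicativity of $q \mapsto S_q(\mathbf{0})$ to factor the truncated sum $\sum_{q \le X} q^{-d} S_q(\mathbf{0})$ as a partial Euler product whose local factors converge to $\sigma_p(A,0)$. Matching this product against the Euler product of $L(s,\chi)$ at $s=1$ yields formally
\[
\sum_{q\ge 1} q^{-d}\, S_q(\mathbf{0}) \;\sim\; L(1,\chi)\, \prod_p (1-\chi(p)p^{-1})\, \sigma_p(A,0),
\]
which is literally true when $\det A$ is not a square (the Euler product of $L(1,\chi)$ then converges absolutely thanks to non-triviality of $\chi$), producing the constant \eqref{non-sq}. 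When $\det A$ is a square, $\chi \equiv 1$ and the truncated ``$L$-factor'' becomes $\sum_{q \le 2L} q^{-1} = \log(2L) + O(1)$, which after reinsertion of $J(q/L)$ and of the Euler correction gives the $L^{d-2}\log L$ main term with coefficient $\sigma_\infty(w)\sigma^*(A)$; the remaining $O(1)$ piece collects into the constant $\sigma_1(w;A,L)$ of \eqref{def:sigma_1}.

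Finally, the $\vc \neq \mathbf{0}$ contribution is bounded by mimicking the argument from Theorem~\ref{th:3}: repeated integration by parts in \eqref{I^0_q} against the oscillatory factor $e_q(-\z\cdot\vc)$ yields $|I^0_q(\vc)| \le C_k\, L^d (q/(L|\vc|))^k\, \|w\|_{k,K_2}$, which combined with the Gauss-sum bound and summation over $q \le 2L$ and $\vc \neq \mathbf{0}$ gives a total of $O(L^{d-2-\eps}\|w\|_{K_1,K_2})$ once $K_1$ is chosen sufficiently large. I expect the main obstacle to be the third step above: the logarithmic main term is only barely larger than the admissible error $L^{d-2-\eps}$, so the Euler-product/$L$-function identification and the truncation error in $T(L)$ must be carried out with quantitative, $L$-uniform control rather than as formal limits, and the non-square and square cases must be unified into a single statement with explicit dependence on $\chi$.
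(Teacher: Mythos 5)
Your overall architecture matches the paper's: decompose $N_L$ via Theorem~\ref{th:2}, isolate $\vc=\mathbf{0}$, obtain the $\log L$ from the near-divergence of $\sum_q q^{-d}S_q(\mathbf 0)$ governed by the character $\chi=(\tfrac{\det A}{\cdot})$, and control the integrals $I^0_q(\mathbf 0)$ via the coarea formula and the regularity of $t\mapsto\II(t)$. The identification of the main term through Heath-Brown's Lemma~31 (i.e.\ $\sum_{q\le X}q^{-d}S_q(\mathbf 0)=\sigma^*(A)\log X+\hat C_A+O(X^{-1/2+\eps})$ in the square case and the $L(1,\chi)$ formula otherwise) is exactly what the paper uses in Lemma~\ref{l:I_B-4}, and your observation that the constant $\hat C_A$ feeds into $\sigma_1$ is also correct.

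However, there is a genuine gap in your treatment of the $\vc\ne\mathbf 0$ modes. You claim that repeated integration by parts plus the Gauss-sum bound gives $O(L^{d-2-\eps})$ for the total nonzero-mode contribution. This is false in the square-determinant case. Already a crude count shows the issue: with $|S_q(\vc)|\lappr q^{d/2+1}$ and the cutoff $q\lappr L$, the $q$-sum $\sum_{q\lappr L} q^{-d/2+1+k}$ grows like $L^k$ for $k\ge 1$, so the $L^{-k}$ gain from integration by parts is exactly cancelled and the contribution of $J_<^{\gamma_1}$ is of order $L^{d-2}$ after multiplying by $c_LL^{-2}$ --- the \emph{same} order as $\sigma_1 L^{d-2}$, not smaller. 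The paper's $\sigma_1$ in \eqref{def:sigma_1} therefore contains an entire sum $\sum_{\vc\ne 0}\eta(\vc)\sigma^*_\vc(A)\sigma^\vc_\infty(w;A,L)$ over the lattice vectors $\vc$ lying on the dual quadric $\vc\cdot A^{-1}\vc=0$ (with $\eta(\vc)=1$), and extracting this requires the asymptotics of $\sum_{q\le X}S_q(\vc;A,0)$ from Heath-Brown's Lemma~30 together with an Abel summation argument, none of which appears in your sketch. In the non-square case $\eta(\vc)\equiv 0$ and this term vanishes, which is why the formula \eqref{non-sq} is cleaner there; but your proposal would, in the square case, silently drop a piece of $\sigma_1$ of leading order.

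A secondary technical omission: the passage from $\sum_q q^{-d}S_q(\mathbf 0)I_q(\mathbf 0)$ to the product of a singular series and a single weight function is not immediate, since $I_q(\mathbf 0)$ depends on $q$. The paper handles this with a summation-by-parts device (Lemma~\ref{l:I_A-4}) that converts the $q$-sum over $q>\rho L$ into $\sigma^*(A)\int_\rho^\infty r^{-1}I^*(r)\,dr$ plus controlled error, which combines with the $\log(\rho L)$ from the $q\le\rho L$ range to produce exactly $\log L$ and a $\rho$-independent constant $K(0)$. Your ``reinsertion of $J(q/L)$'' gestures at this but does not give a mechanism, and as you note yourself the error budget is tight, so this step would need the quantitative Abel-summation treatment rather than a formal Euler-product manipulation.
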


If $\eta(0) \sigma^*(A) =0$,  then the asymptotic \eqref{m_result-4} degenerates.
 Similar \eqref{m_result} also 
degenerates to an upper bound on $N_L$, 
 unless we know that $\sigma(A,L^2m)$ admits a suitable positive lower
 bound, for all $L$. Luckily enough,
the required  lower bounds often exist, see below Proposition~\ref{p15}.

Theorems \ref{th:3} and \ref{th:3-4} refine Theorems~5, 6 and 7 from  \cite{HB}    in three respects: 
firstly, now the weight function $w$ has  finite  smoothness and sufficiently fast 
 decays at infinity, while in \cite{HB} $w\in C_0^\infty$. Secondly, we specify how the remainder depends on $w$. 
 Thirdly and the most importantly, we remove the restriction that the support of $w$ does not contain the origin, imposed in \cite{HB} 
 in a number of crucial statements.
These improvements are crucial for us since in our work  \cite{DKMV},
dedicated to the problem of wave turbulence, the two theorems above  are used in the
situation when $w(0) \ne0$  and the support of $w$ is not compact.  A similar  specification of the Heath-Brown method was obtained in 
 \cite[Section~5]{BGHS18} to study an averaging  problem, related to the questions,    considered in \cite{DKMV}.
 Apart from wave turbulence and averaging, the replacement of sums over integer points of a quadric by integrals, 
 with careful estimating the remainders, is needed in Kolmogorov
 Arnold Moser theory  for partial differential equations, e.g. see (C.2) in \cite{EGK}. 
 The publications 
 \cite{EGK, BGHS18, DKMV} are recent. We are certain that these days, when people, working in PDEs and dynamical systems, treat complicated 
 non-linear phenomena with resonances more and more often, there will be increasing demand for the instrumental 
 asymptotics \eqref{m_result}, \eqref{m_result-4} and their variations. Our paper uses only basic results 
 from the number theory and is well available to readers from Analysis. 
 \smallskip

 We note  that the  papers \cite{G18} and \cite{T19} treat the sums $N_L(w; A, m)$  for even and odd dimensions
  $d$ correspondingly, without the restriction that   $w(0)\ne 0$, 
   in a  more general context than  our Theorems~\ref{th:3} and \ref{th:3-4}. 
However, due to this generality the corresponding constants in the asymptotical in $L$ 
 formulas  in  \cite{G18}  and \cite{T19}  are very implicit (e.g., the question whether
they vanish or not  is highly non-trivial).   The connection of the constants 
with singular integrals like \eqref{lead_term} and the dependence of the remainders in the asymptotics on the weight function $w$, 
 crucial for application in analysis,   is not clear.  Another feature of \cite{G18, T19}
 is the use of rather advanced adelic technique, which makes it difficult for readers without  serious 
 number-theoretical background to use the result and the method of the work.

\smallskip

\noindent{\it Remarks.}
1) Theorem~\ref{th:3} is a refinement of Theorem~5 of \cite{HB},
while Theorem~\ref{th:3-4} refines Theorems 6 and 7 of \cite{HB}.
In \cite{HB} also is available some asymptotic in $L$  information about
behaviour of the sums $ N_L(w{ ; A, m})$ when  $d=4$, $m\neq 0$ and
$d=3$, $m=0$.  Since our proof of Theorems~\ref{th:3} and \ref{th:3-4} is based 
on ideas from \cite{HB},  strengthened by Theorem~\ref{c_integrals}, which is valid for $d\ge3$, then most likely 
our approach allows to generalise the above-mentioned results of   \cite{HB}  for $d=3,4$ 
to the case when $w\in  \cC^{K_1,K_2}(\R^d)$ 
with suitable $K_1, K_2$.

\noindent 2) In our work  the dependence of constants in estimates on $m$ is uniform
  on  compact intervals,  while the dependence on the operator $A$ is only via   the norms of $A$ and $A^{-1}$. 
  
  \noindent 3)
  The values of  constants $K_j(d, \eps)$ in \eqref{m_result}, given in Theorem~\ref{th:3}, 
   are far from optimal since   it was not our goal to optimise them. 
    \smallskip
  
    \noindent   4) As the theorems' proof are based on the representation \eqref{eq:sum}, then the function $w$ should be regular
    (see \eqref{regular}). But this holds true if 
    $w\in\cC^{d+1,d+1}$
    and so is valid  if the constants $K_1, K_2 $ are  sufficiently big. E.g. if $K_1, K_2 $
    are  as big as in the last line of the assertion of Theorem~\ref{th:3}.
      
  \medskip
  
 {\it \noindent Brief discussion of the proofs.} We present  in full only a proof of Theorem~\ref{th:3}, which  resembles  
that of \cite[Theorem~5]{HB} with an additional control of how the constants depend
on $w$.   The significant difference from the argument of Heath-Brown  comes in Sections~3 and 4, 
 where we do not assume that the function $w$ vanishes near the
origin, the last assumption being crucial in the analysis of integrals
in Sections~6 and 7 of \cite{HB}. To cope with this difficulty,  which becomes apparent e.g.
in Proposition~\ref{l:I_q(0)=}
	below,  we have to  examine the smoothness at zero of  function 
  \be\label{hrum}
  t \mapsto \sigma_\infty (w;A,  t)
  \ee
  and its decay at  infinity. The corresponding analysis is performed in Section~\ref{sec:7}. There, 
  using the techniques, developed in \cite{DK1} to study  
  integrals \eqref{lead_term}, we prove that  function \eqref{hrum} is $({ \lceil d/2\rceil}-2)$-smooth, 
  but in general for even $d$  
  its  derivative of  order  $(d/2-1)$ may have a logarithmic 
  singularity at zero. There we also  estimate  the rate of  decay of   function \eqref{hrum} at infinity. 

The proof of Theorem~\ref{th:3-4} resembles  that of
Theorems~6 and 7 of \cite{HB} with a new addition given by
Proposition~\ref{l:I_q(0)=}, based on the result of Section~\ref{sec:7}.  
We thus limit ourselves to a sketch of the theorem's  demonstration, given in  Section~\ref{sec:2-4}
in parallel to that of Theorem~\ref{th:3}, and point out the main
differences between the two proofs.   Establishing 
Theorem~\ref{th:3-4} we use certain results from \cite{HB} (namely, Lemmas~30 and 31) without  proof.
  \medskip
  
 \noindent
{\it  Lower bounds for  constant  from the asymptotics.} 
 Let us now discuss lower bounds for the constants $\sigma(A,    L^2m)$ and $\sigma^*(A)$ 
 from Theorems~\ref{th:3} and  \ref{th:3-4}.
  \begin{proposition}\label{p15}
  		$(i)$  If $d\ge 5$ then there exist
      positive constants $c(A)<C(A)$ such that $0<c(A)\le \sigma(A,    L^2m)\le C(A)<\infty $ for any non-degenerate matrix $A$, uniformly in $L$ and $m$.
  	
  	$(ii)$  If $d=4$ and $m=0$ we have $\sigma^*(A)>0 $ for any non-degenerate matrix $A$ such that the corresponding equation $2F({\bf z})=A {\bf z}\cdot {\bf z}=0$   has non-trivial solutions in every p-adic field (in particular this holds if the equation has a non-trivial solution in ${\Z^4}$). 
  \end{proposition}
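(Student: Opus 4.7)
The plan is to interpret each local factor $\sigma_p(A,N)$ as a $p$-adic density of solutions to the congruence $F(b)\equiv N \pmod{p^l}$, and then analyse positivity and finiteness using Gauss sum bounds, Hensel's lemma, and, for part (ii), properties of the Dirichlet $L$-function. First I would establish the classical identity
\[
  \sigma_p(A,N) \;=\; \lim_{l\to\infty} p^{l(1-d)} M_p(l,N), \qquad M_p(l,N) := \#\bigl\{b \in (\Z/p^l\Z)^d : F(b)\equiv N \pmod{p^l}\bigr\},
\]
by Fourier inversion on $(\Z/p^l\Z)^d$ with respect to the characters $e_{p^j}$; this simply rewrites the truncated sum $\sum_{j=0}^l p^{-dj}S_{p^j}(\mathbf 0;A,N)$ as $p^{l(1-d)}M_p(l,N)$. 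With this identity in hand, positivity and finiteness of $\sigma_p$ become statements about $p$-adic solvability of $F(b)\equiv N$.

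For part (i) with $d\ge 5$, I would first invoke a Weil-type estimate $|S_{p^l}(\mathbf 0;A,N)| \ll_A p^{l(d+1)/2}$ at $p\nmid 2\det A$, with a weaker but still summable bound at the finitely many bad primes; this yields $|\sigma_p - 1| \leq C_A\, p^{-(d-1)/2}$. Since $(d-1)/2\ge 2$, the Euler product $\prod_p \sigma_p$ converges absolutely and is bounded uniformly in $N=L^2m$, giving the upper bound $C(A)$. For the lower bound I would split the primes at a threshold $p>P_0(A)$ above which $\sigma_p\geq 1/2$ automatically; for the finitely many $p\leq P_0(A)$ positivity follows from Meyer's theorem (every non-degenerate quadratic form in $\ge 5$ variables over $\Q_p$ represents every element of $\Q_p$, in particular $0$ non-trivially), combined with Hensel lifting away from the singular locus $\{b : Ab\equiv 0 \pmod p\}$. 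This produces $M_p(l,N)\geq c_p(A)\, p^{l(d-1)}$, hence $\sigma_p\geq c_p(A)>0$. Uniformity in $m$ is then guaranteed because $c_p(A)$ can be chosen to depend only on the $p$-adic valuation of $N=L^2m$, which for $p\leq P_0(A)$ takes only finitely many relevant values.

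For part (ii) with $d=4$ and $m=0$, the same Gauss-sum estimate gives only $|\sigma_p-1|=O(p^{-3/2})$, which is summable, but a finer analysis at good primes makes the Dirichlet $L$-function structure of $\sigma^*(A)$ visible. Following Heath-Brown's Lemmas 30 and 31, at each good prime $p\nmid 2\det A$ one obtains an explicit formula of the form
\[
  \sigma_p(A,0) \;=\; (1-p^{-1})^{-1}(1-\chi(p)p^{-1})^{-1}\bigl(1+O(p^{-2})\bigr), \qquad \chi = \bigl(\tfrac{\det A}{*}\bigr),
\]
so that $(1-p^{-1})\sigma_p(A,0) = (1-\chi(p)p^{-1})^{-1}(1+O(p^{-2}))$. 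The good-prime tail of $\sigma^*(A)$ thus factors as an absolutely convergent Euler product times $L(1,\chi)$ when $\chi$ is non-trivial; Dirichlet's non-vanishing $L(1,\chi)>0$ gives positivity, while for $\chi$ trivial (square $\det A$) the product is likewise absolutely convergent. Positivity of each individual factor $\sigma_p(A,0)$, including at the primes $p\mid 2\det A$, follows from the standing hypothesis of non-trivial $\Q_p$-solvability of $F({\bf z})=0$, which via Hensel lifting around a smooth $p$-adic zero yields a positive-density family of solutions of $F(b)\equiv 0\pmod{p^l}$ for every $l$.

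The main obstacle is the explicit local computation at good primes that feeds into the Dirichlet $L$-function factorisation in (ii); this requires a careful Jordan-type decomposition of $A$ modulo $p$ and a direct evaluation of the quadratic Gauss sums $S_{p^l}(\mathbf 0;A,0)$, with particular care at $p=2$ where Hensel's lemma must be applied modulo higher powers. For these computations I would appeal directly to Heath-Brown's Lemmas 30 and 31, as announced in the introduction. The handling of the finitely many bad primes in part (i), and of the non-triviality condition at the cone vertex in part (ii), is technically delicate but reduces to standard local-density arguments once the non-trivial $p$-adic zero provided by hypothesis is fixed.
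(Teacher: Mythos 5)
The paper does not actually prove Proposition~\ref{p15}: the text immediately after its statement says ``We do not prove this result'' and refers to Theorems~4, 6, 7 of \cite{HB}, while Appendix~B supplies a fully explicit verification only for the special form $F=\sum_{i=1}^{s} x_iy_i$ with $d=2s$ and $m=0$, where Lemma~\ref{l:s_p} yields $\sigma_p(d) = \frac{(1+p^{1-s})(1-p^{-s})}{1-p^{2-2d}}$ and the Euler products are evaluated in closed form. Your general-purpose sketch (local-density identity, Gauss-sum bounds, Hensel lifting, Meyer's theorem, $L(1,\chi)\neq 0$) is in the spirit the paper itself recommends for generalizing Appendix~B (``replacing explicit formulas by some general results (e.g.\ Hensel's Lemma)''), and your opening identity $\sigma_p(A,N)=\lim_{l\to\infty}p^{l(1-d)}M_p(l,N)$ is exactly the formula \eqref{sigma} proved by induction at the end of Appendix~B. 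The argument for part~(i) is acceptable in outline, though the claim that the lower bound $c_p(A)$ depends only on ``finitely many relevant values'' of $v_p(L^2m)$ is not justified as stated (that valuation grows without bound with $L$); the correct point is that for $d\ge 5$ the local density $\sigma_p(A,N)$ stabilizes and admits a uniform positive lower bound over all $N\in\Z_p$ once $p$-adic isotropy is known.

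There is, however, a concrete error in the local formula you assert in part~(ii). You write $\sigma_p(A,0)=(1-p^{-1})^{-1}(1-\chi(p)p^{-1})^{-1}\bigl(1+O(p^{-2})\bigr)$. Checking against the paper's own computation for $F=\sum x_iy_i$, $d=4$ (so $\det A=1$ is a square, $\chi$ is trivial, $\chi(p)=1$): there $\sigma_p(4)=\frac{(1+p^{-1})(1-p^{-2})}{1-p^{-6}}=\frac{1+p^{-1}}{1+p^{-2}+p^{-4}}=1+p^{-1}+O(p^{-2})$, whereas your formula gives $(1-p^{-1})^{-2}\bigl(1+O(p^{-2})\bigr)=1+2p^{-1}+O(p^{-2})$. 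The extraneous factor $(1-p^{-1})^{-1}$ should be removed: the correct good-prime asymptotic is $\sigma_p(A,0)=(1-\chi(p)p^{-1})^{-1}\bigl(1+O(p^{-2})\bigr)=1+\chi(p)p^{-1}+O(p^{-2})$. This matters: with the corrected formula, $(1-p^{-1})\sigma_p(A,0)=(1-p^{-1})(1-\chi(p)p^{-1})^{-1}\bigl(1+O(p^{-2})\bigr)$ yields an absolutely convergent, positive product precisely when $\chi$ is trivial (square $\det A$, $\eta(0)=1$), and degenerates to zero when $\chi$ is non-trivial --- consistent with the appearance of $\eta(0)\sigma^*(A)$ in \eqref{m_result-4} and the switch to the form \eqref{non-sq} in the non-square case. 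Your version, as written, would spuriously give $\sigma^*(A)=L(1,\chi)\cdot(\text{convergent})>0$ for non-square $\det A$ as well.
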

  
  See Theorems 4, 6 and 7 of \cite{HB}.  
  We do not prove this result, but  just note that its  demonstration uses a refinement of the calculation in the 
  second part of the proof of Lemma~\ref{l:31HB}. Namely, while the lemma 
  gives an upper bound for the desired quantity, a more thorough analysis  permits also to establish the claimed lower bounds. 

   In Appendix B we give essentially a
   complete calculation,  proving Proposition \ref{p15} in the case of the simplest quadratic form $F=\Sigma_{i=1}^{d/2} x_iy_i$, $ d=2s\ge 4$,     and $m=0$. A proof of the proposition for  any $A$
    may follow the same lines,   replacing  explicit formulas by some general results (e.g.  Hensel's Lemma).
  \medskip

   \noindent 
 {\it Non-homogeneous quadratic polynomials. }Now
 consider a non-homogeneous quad\-ratic  polynomial $\cF$ with the second order part, equal to $F$ in \eqref{FA}:
 $$
 \cF(\z) = \tfrac12 A\z\cdot \z +\z_* \cdot\z +\tau, \qquad \z_*\in \R^d,\; \tau \in\R,
 $$
 and the corresponding  set $\Sigma^\cF =\{\z: \cF(\z) =0\}$, 
 $
 N_L(w; \cF) = \sum_{\z \in \Sigma^\cF\cap \Z^d_L} w(\z). 
 $
 Denote 
 $$
 \zz =A^{-1}\z_*, \quad \z' =\z+\zz, \quad m = \tfrac12 \zz\cdot A\zz- \tau,
 $$
 and assume that $\zz\in \Z^d_L$ \footnote{This holds e.g. if  $\det A=\pm1$ and $\z_*\in \Z^d_L$.}  and $ L^2 \tau \in\Z$.
 Then ${L^2m}\in\Z$, $\z'\in\Z^d_L$ if and only if $\z \in\Z^d_L$, and $\cF(\z) = F(\z') -m$. So setting 
 $
 w^\zz(\z') = w(\z' -\zz)
 $
 we have 
 $
 N_L(w; \cF) = N_L(w^\zz; A,m).
 $
 Since
 \[
 \begin{split}
 \sigma_\infty(w^\zz; A,m)=  \int_{\Sigma_m} w^\zz(\z') \frac{d\z'\!\mid_{\Sigma_m}}{ |\nabla F(\z')|} 
 = \int_{\Sigma^\cF} w(\z) \frac{d\z\!\mid_{\Sigma^\cF}}{ |\nabla \cF(\z)|} =: \sigma_\infty(w; \cF), 
 \end{split}
 \]
  then we arrive at the following corollary from Theorem~\ref{th:3}:
  
  \begin{corollary}
  If $d\ge5$, the quadratic form $F$ is as in Theorem \ref{th:3},
  $\cF$ is a non-homogeneous quadratic form as above and $L$ is such that
  $
  \zz:= A^{-1} \z_* \in \Z^d_L, \  \tau L^2\in\Z,
  $
  then for any $0<\eps\le1$ 
   and $w \in \cC^{K_1,K_2}(\R^{d})\cap \cC^{0,K_3}(\R^{d})$ we have 
  $$
  \big| N_L(w; \cF) - \sigma_\infty(w; \cF)\, \sigma(A, L^2 m) L^{d-2} \big| \le C   L^{d/2+\eps}\left(\|w\|_{K_1,K_2}+\|w\|_{0,K_3}\right).
  $$
  Here the constants $K_1, K_2, K_3$  depend on $d$ and $\eps$, while $C$ depends on $d, \eps, A$ and $\tau, |\z_*|$. 
  \end{corollary}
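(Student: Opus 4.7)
The plan is to reduce the corollary to Theorem~\ref{th:3} via the translation $\z\mapsto\z+\zz$ already set up in the paragraph immediately preceding the statement. That paragraph establishes $N_L(w;\cF)=N_L(w^\zz;A,m)$ and $\sigma_\infty(w;\cF)=\sigma_\infty(w^\zz;A,m)$, where $\zz=A^{-1}\z_*$ and $m=\tfrac12\zz\cdot A\zz-\tau$. Thus the corollary will follow by applying Theorem~\ref{th:3} to the shifted weight $w^\zz$ with the homogeneous form $F$ and parameter $m$, once two small items are checked.

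First I would verify the integrality hypothesis $L^2m\in\Z$ of Theorem~\ref{th:3}. Since $L^2\tau\in\Z$ by assumption, this reduces to showing $\tfrac12 L^2\zz\cdot A\zz\in\Z$. But $L\zz\in\Z^d$ by hypothesis, so $\tfrac12 L^2\zz\cdot A\zz=F(L\zz)$, and this is an integer because $A$ is symmetric with integer entries and even diagonal, which is precisely the condition making $F$ integer-valued on $\Z^d$. Second, I would record the uniform norm comparison
\[
\|w^\zz\|_{n_1,n_2}\le\langle\zz\rangle^{n_2}\|w\|_{n_1,n_2},
\]
which is immediate from translation invariance of the partial derivatives and from $\langle\z\rangle\le\langle\z-\zz\rangle\langle\zz\rangle$.

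Feeding these two items into Theorem~\ref{th:3} applied to $w^\zz$ produces
\[
\bigl|N_L(w;\cF)-\sigma_\infty(w;\cF)\,\sigma(A,L^2m)\,L^{d-2}\bigr|\le C L^{d/2+\eps}\bigl(\|w^\zz\|_{K_1,K_2}+\|w^\zz\|_{0,K_3}\bigr),
\]
with the constant $C$ from Theorem~\ref{th:3} depending on $d,\eps,m,A$. Absorbing the extra factor $\langle\zz\rangle^{\max(K_2,K_3)}$ into the constant, and noting that $|\zz|\le\|A^{-1}\|\,|\z_*|$ and that $m$ is determined by $A,\z_*,\tau$, yields the stated estimate with a constant depending on $d,\eps,A,\tau,|\z_*|$. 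No genuine obstacle is expected here: the argument is purely bookkeeping together with the short arithmetic check that $L^2m\in\Z$; the analytic content is entirely inherited from Theorem~\ref{th:3}.
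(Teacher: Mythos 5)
Your argument is correct and follows exactly the route the paper intends: the paragraph preceding the corollary sets up the translation $\z'=\z+\zz$ giving $N_L(w;\cF)=N_L(w^\zz;A,m)$ and $\sigma_\infty(w;\cF)=\sigma_\infty(w^\zz;A,m)$, and the corollary is then Theorem~\ref{th:3} applied to $w^\zz$; you have usefully filled in the two details the paper leaves implicit, namely the arithmetic check $L^2m=F(L\zz)-L^2\tau\in\Z$ and the norm comparison between $w^\zz$ and $w$. The only blemish is the stated inequality $\lan\z\ran\le\lan\z-\zz\ran\lan\zz\ran$, which is false as written (e.g.\ $\z=1.5$, $\zz=0.5$ in $d=1$); the correct bound is $\lan\z\ran\le 2\lan\z-\zz\ran\lan\zz\ran$, giving $\|w^\zz\|_{n_1,n_2}\le 2^{n_2}\lan\zz\ran^{n_2}\|w\|_{n_1,n_2}$, and the extra $2^{n_2}$ is harmlessly absorbed into $C$, so the conclusion stands.
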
 


\medskip

{\bf Notation and agreements.} We write 
$
A  \lappr_{a,b}  B
$ 
if $A\le C B$, where the constant $C$ depends on $a$ and $b$. Similar, $O_{a,b}( \|w\|_{m_1, m_2}) $ stands for a quantity, bounded 
 in absolute value by 
$
C(a,b) \|w\|_{m_1, m_2}. 
$
 We do not indicate the dependence on the  matrix norms $\|A\|$, $\|A^{-1}\|$ and on the dimension $d$ since most of our estimates depend on 
 these quantities. 

We always  assume that  function $w$ belongs to the space $\cC^{m,n}(\R^d)$ with sufficiently large $m,n$. 
If in the statement of an assertion we employ the norm $\|w\|_{a,b}$ then we assume that $w\in\cC^{a,b}(\R^d)$. 

We denote $e_q(x) = e^{2\pi ix/q}$ and abbreviate $e_1(x)=:e(x)$. By $ \lceil \cdot \rceil$ we denote the ceiling function,
$
 \lceil x \rceil =\min_{n\in\Z}\{ n\ge x\}.
$
By $\N$ we denote the set of  positive integers.
\medskip

 {\bf Acknowledgements.} The authors thank Professor Heath-Brown for advising  them concerning the  paper \cite{HB}.  The work of AD was funded by a grant from the Russian Science Foundation (Project 20-41-09009) [Sections~1-7] and by the Grant of the President of the Russian Federation (Project MK-1999.2021.1.1) [Sections~A-C]. 
 Research of  SK was equally supported   by the Ministry of Science and Higher Education of the Russian Federation (megagrant No. 075-15-2022-1115) and  by  l'Agence Nationale de la Recherche  (France),  grant    17-CE40-0006.

\subsection{Scheme of the proof of Theorem \ref{th:3}}\label{sec:2}

Let $d\ge5$.  As it has been already discussed, if $w$ satisfies assumptions of the theorem with sufficiently large constants $K_i$  then $w$ is regular in the sense of Section~\ref{s_1.1}, so Theorem~\ref{th:2} applies. 
Then, according to \eqref{eq:sum} and \eqref{obvious}, 
\begin{equation}\label{eq:sum'}
N_L(w{ ; A, m})=c_L \,L^{-2}\sum_{\vc\in \Z^{d}}\sum_{q=1}^\infty q^{-d}
S_q(\vc) I_q(\vc)\,,
\end{equation}
where the sum $S_q(\vc)=S_q(\vc;A,L^2m)$ is given by \eqref{eq:S}  with $t={L^2m}$ and the integral 
 $I_q(\vc)$~-- by \eqref{I^0_q} with $\tilde w=w_L$, $Q=L$ and $t={L^2m}$,
\begin{equation}\label{eq:I'}
I_q(\vc; A,m,L) := \int_{\R^{d}} w\left(\frac{\z}{L}\right)h\left(\frac qL,
\frac{{ F^{{L^2m}}}(\z)}{L^2}\right) e_q(-\z\cdot \vc)
\,d\z\,.
\end{equation}
Denoting
$$
n(\vc;A,m,L)=\sum_{q=1}^\infty q^{-d}
S_q(\vc) I_q(\vc)\,,
$$ 
we have  $\ds{N_L(w{ ;A,m})=c_L L^{-2}\sum_{\vc\in\Z^{d}} n(\vc)}$. Then for 
 any $\gamma_1 \in (0,1/2)$ we  write $N_L$ as 
\begin{equation}\label{eq:N_L=}
N_L(w{ ;A, m})=c_L L^{-2}\big(J_0 + J_{<}^{\gamma_1} + J_{>}^{\gamma_1}\big),
\end{equation}
where 
\begin{equation}\label{eq:J><}
 J_0:=n(0)\,, \quad J_<^{\gamma_1}:=\sum_{\vc\ne 0,\,|\vc|\leq
   L^{\gamma_1}}  n(\vc) \,,\quad 
J_>^{\gamma_1}:=\sum_{|\vc|> L^{{\gamma_1}}}  n(\vc)\,.
\end{equation}
Proposition~\ref{l:19} (which is a modification of Lemmas~19 and  25  from \cite{HB})  implies that 
 $$
 |J_>^{\gamma_1}|\lappr_{\gamma_1,m} \|w\|_{N_0,2N_0+d+1}\,
 $$
with $N_0:=\lceil {d+(d+1)/{\gamma_1}} \rceil$ (see Corollary~\ref{c:J>}). In 
Proposition~\ref{l:J_<},  following  Lemmas~22 and   28  from \cite{HB}, we show   that 
\begin{equation}\label{eq:J_<}
 |J_<^{\gamma_1}|\lappr_{{\gamma_1}{ ,m}} L^{d/2+2+\gamma_1(d+1)} \left(\|w\|_{\bar
   N,d+5}+\|w\|_{0,\bar N+3d+4}\right) \,,
 \end{equation}
$\bar N= \lceil d^2/{\gamma_1}\rceil-2d$. 

To analyse $J_0$ we write it as  $J_0=J_0^++J_0^-$, where 
\be\lbl{eq:n(0)=}
J^+_0:=\sum_{q>\rho L}  q^{-d}
S_q(0) I_q(0)\,,\quad J_0^-:=\sum_{q\leq \rho L} q^{-d}
S_q(0) I_q(0)\,,
\ee
with $\rho = L^{-\gamma_2}$ for some  $0<\gamma_2<1$ to be determined.
 Lemma~\ref{l:I_A}, which is a combination of Lemmas 16 and 25 from
 \cite{HB}, modified using the results from Section~\ref{sec:7}, 
 implies that 
  $$
  \Bigl|J_0^+\Bigr|\lappr L^{d/2+2+\gamma_2(d/2-1)}|w|_{L_1}{\lappr}
  L^{d/2+2+\gamma_2(d/2-1)}\|w\|_{0,d+1}  .
  $$
 Finally  Lemma~\ref{l:I_B}, which is a combination of Lemma~13 and  simplified 
 Lemma~31  from \cite{HB} with the  results from Section~\ref{sec:7}, establishes  that $J_0^- $ equals
 $$
  L^{d} \sigma_\infty(w) \sigma(A,{L^2m}) 
 + O_{\gamma_2{ ,m}}\Big(
 \big(\|w\|_{d/2-2,d-1} +\|w\|_{0,d+1}\big) L^{d/2+2+\gamma_2{ (d/2-2)}}\Big)
 $$
 (see \eqref{lead_term} and \eqref{77}). 
 Identity \eqref{eq:N_L=} together with the estimates above  implies 
 the desired result if we choose $\gamma_2=\eps/{(d/2-1)}$ and
 $\gamma_1= \eps/(d+1)$.
  Uniform in $L$ and $m$ boundedness of  the product $\sigma(A,L^2m)$ follows from Lemma~\ref{l:31HB}.

\subsection{Scheme of the proof of Theorem \ref{th:3-4}}\label{sec:2-4}
 In this section we assume that $d=4$ and $m=0$. The proof proceeds exactly as in the previous section up to formula
\eqref{eq:J_<}, which is not sharp enough for the case $d=4$ and
should be replaced by
\be\lbl{this_bound1}
\left|J_<^{\gamma_1}- L^d\sum_{\vc\neq 0}\eta(\vc){\sigma^*_\vc(A)
\sigma_{\infty}^\vc}(w;A,L)\right|\lappr_{{\gamma_1}}
L^{7/2+(d+4)\gamma_1}\|w\|_{\tilde K_1,\tilde K_2} 
\ee
for appropriate constants $\tilde K_1, \tilde K_2$,
where the terms
 $\sigma^*_\vc(A)$ are introduced in \eqref{eq:sigma_p}, 
terms ${\sigma_{\infty}^\vc}(w;A)$ are given by
\be\lbl{sigma_infty^c}
{\sigma_{\infty}^\vc}(w;A,L): = L^{-d} \sum_{q=1}^\infty
q^{-1}I_q(\vc;A,0,L)  \,,
\ee
and the constants $\eta(\vc)=\pm 1$  are defined in Lemma~\ref{l:30}. In  particular, $\eta(0)=1$ if the determinant $\det A$ is a square of an integer and $\eta(0)=0$ otherwise.
The
proof of the bound \eqref{this_bound1} makes use of Lemma~\ref{l:30} (Lemma~30 of \cite{HB}),
involving only minor modifications of the argument in  \cite{HB} and is left to the reader.

The bound on $J_0$ must be refined too and this is done in Appendix~\ref{s:appendix}.
We consider only the case when the determinant $\det A$ is a square of an integer, so in particular $\eta(0)=1$. The opposite case can be obtained by minor modification of the latter, following \cite{HB} (see Appendix~\ref{s:appendix} for a discussion). In Proposition~\ref{l:n(0)-4}, which is a
combination of Lemmas~13, 16 and 31 of \cite{HB}, modified using
Proposition~\ref{l:I_q(0)=}, we prove that in the case of square determinant $\det A$
\begin{equation*}
  \begin{split}
 J_0= &\sigma_\infty(w)\sigma^*(A)L^d\log L + K(0)
 L^d \\
& + O_{\eps}\big(L^{d-\eps}
 \big(\|w\|_{d/2-2,d-1}+\|w\|_{0,d+1}\big)\big)  \,,
  \end{split}
\end{equation*}
        where a constant $K(0)=K(0;w,A)$ is defined in Section~\ref{sec:app1}.
 Again, identity \eqref{eq:N_L=} together with the estimates above  implies 
 the desired result if we choose $\gamma_1=(\tfrac12-\eps)/(d+4)$ and put
\be\lbl{def:sigma_1}
\sigma_1(w;A,L):=K(0)+ \sum_{\vc\neq 0}\eta(\vc) \sigma^*_\vc(A)
\sigma_{\infty}^\vc(w;A,L)  \,.
\ee
Finiteness of the products $\sigma^*_\vc(A)$ follow from
Lemma~\ref{l:31} while the claimed in the theorem estimate for the
constant  $\sigma_1(w;A,L)$ is established in
Section~\ref{s:sigma_1}.

 \section{ Series $S_q$ }  
 Now we start to prove Theorem \ref{th:3}, following the scheme presented in Section \ref{sec:2}.   Part of the  assertions, forming the proof, do not use that $d\ge5$. 
 So below 
 in all assertion involving the dimension $d$,  we indicate the real requirements on $d$.
  We recall that the constants in estimates 
 may depend on $d$ and $A$, but this dependence is not indicated (see {\it Notation and agreements}).

 In the present section we analyse the sums $S_q(\vc)= S_q(\vc; A, L^2m)$ entering, in particular,  the definitions of the  singular series 
 $\sigma(A,{L^2m})$ and $ \sigma_p(A,{L^2m})$.
 \begin{lemma}[Lemma 25 in \cite{HB}]\label{l:25HB}
 	For any $d\geq 1$
 	we have $|S_q(\vc;A,L^2m)|\lappr q^{d/2+1}$, uniformly in $\vc\in\Z^{d}$.
 \end{lemma}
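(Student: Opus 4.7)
The plan is to reduce $S_q$ to a uniform estimate on a single multi-dimensional Gauss sum. Since $F^{L^2m}(\vb) = F(\vb) - L^2m$, the factor $e_q(-aL^2m)$ pulls out of the inner $\vb$-summation, giving
\[
S_q(\vc;A,L^2m) = \sum_{a\,(\bmod q)}^{*} e_q(-aL^2m)\, G_q(a,\vc), \qquad G_q(a,\vc) := \sum_{\vb\,(\bmod q)} e_q\bigl(aF(\vb) + \vc\cdot\vb\bigr).
\]
Since there are at most $\phi(q) \le q$ admissible residues $a$ and $|e_q(\cdot)|=1$, the lemma reduces to the uniform Gauss-sum estimate
\[
|G_q(a,\vc)| \lappr q^{d/2} \qquad \text{for all } \vc \in \Z^d \text{ and all } a \text{ with } (a,q)=1.
\]

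To prove this estimate I would exploit multiplicativity: by the Chinese Remainder Theorem, whenever $(q_1,q_2)=1$ one has a factorisation $G_{q_1q_2}(a,\vc) = G_{q_1}(a',\vc)\,G_{q_2}(a'',\vc)$ with suitable $a',a''$ coprime to $q_1,q_2$ respectively. It therefore suffices to treat $q=p^k$ a prime power. For the ``good'' primes $p\nmid 2\det A$, the matrix $A$ is invertible modulo $p^k$, and a standard completion of the square via the shift $\vb \mapsto \vb + a^{-1}A^{-1}\vc$ reduces $G_{p^k}(a,\vc)$ to a unit-modulus phase times the pure quadratic Gauss sum $\sum_{\vb} e_{p^k}(aF(\vb))$. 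Diagonalising $A$ modulo $p^k$ (again available since $p\nmid 2\det A$) factors the pure sum into $d$ one-variable quadratic Gauss sums of modulus $p^{k/2}$, giving $|G_{p^k}(a,\vc)| = p^{kd/2}$ exactly.

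For the finitely many ``bad'' primes $p\mid 2\det A$, completion of the square fails because $A$ is not invertible modulo $p$. Here I would fall back on a direct $p$-adic / stationary-phase argument: bounding $|G_{p^k}(a,\vc)|^2$ by a sum over pairs $(\vb_1,\vb_2)$ reduces matters to counting solutions of a lower-dimensional linear congruence of the form $aA(\vb_1-\vb_2) \equiv \text{(something depending on }\vc\text{)}$ modulo appropriate powers of $p$; a Hensel-type lifting then yields $|G_{p^k}(a,\vc)| \le C(p,A)\, p^{kd/2}$ with a constant depending only on $p$ and $A$. Taking the product over the finitely many bad primes absorbs all such constants into a single $C(A)$, which combined with the good-prime contribution gives $|G_q(a,\vc)| \le C(A)\, q^{d/2}$; since constants depending only on $A$ are hidden in $\lappr$, this completes the proof. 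The main obstacle is the bad-prime analysis, where the absence of a clean change of variables forces a careful local calculation at each singular prime; the payoff is that uniformity in $\vc$ is automatic, since $\vc$ never appears in the final bound.
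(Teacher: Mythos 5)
Your proposal is correct, but it takes a genuinely different (and substantially longer) route than the paper's. The paper never factors $q$ or distinguishes primes: it applies Cauchy--Schwarz in $a$ to get $|S_q(\vc)|^2\le \phi(q)\sum_a^*|G_q(a,\vc)|^2$, opens $|G_q|^2$ as a double sum over $\vu,\vv$, and after the change of variables $\vw=\vu-\vv$ observes that the inner sum $\sum_{\vv\,(\mathrm{mod}\,q)}e_q(a\vv\cdot A\vw)$ is a complete character sum that vanishes unless $q\mid A\vw$; the number of such $\vw$ mod $q$ is $O_A(1)$ by elementary divisor theory, so $|S_q|^2\lesssim \phi(q)^2 q^d$ in one stroke. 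Your route instead bounds $|S_q|\le\phi(q)\max_a|G_q(a,\vc)|$, reduces $G_q$ to prime powers via CRT, evaluates $|G_{p^k}|$ exactly for good primes by completing the square and diagonalising, and falls back on a differencing/counting argument at the finitely many bad primes $p\mid 2\det A$. Both give the same exponent $q^{d/2+1}$. What the paper's approach buys is brevity and uniformity: one five-line computation, no case split, no CRT, and no reliance on invertibility of $A$ mod $p$. What your approach buys is sharper structural information -- an exact modulus $p^{kd/2}$ at good primes and a genuine local--global factorisation of $G_q$ -- which is the kind of refinement the paper does deploy elsewhere (the multiplicativity of $S_q$ in Lemma~\ref{l:31HB}, and the explicit $p$-adic counts in Appendix~B), but which is overkill for this particular upper bound. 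It is also worth noting that your bad-prime fallback (squaring $G_{p^k}$, differencing, counting solutions of $A\vw\equiv 0$) is exactly the paper's global argument restricted to a prime power; applied to all of $q$ at once it already does the whole job, so the good-prime/bad-prime dichotomy is a detour here even though each step in it is sound.
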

\begin{proof} According to \eqref{eq:S}, an application of
  the Cauchy-Schwarz inequality shows that
  \begin{equation}\label{eq:S_q=}
  \begin{split}
&|S_q(\vc)|^2 \leq \phi(q) {\sum_{a(\!\!\!\!\!\!\mod
		q)}}^* \, \Big|\sum_{\vb(\!\!\!\!\!\!\mod q)} e_q(a{ F^{{L^2m}}}(\vb) + \vc\cdot \vb)   \Big|^2  \\
&= \phi(q) {\sum_{a(\!\!\!\!\!\!\mod q)}}^*  \sum_{\vu,\vv(\!\!\!\!\!\!\mod q)} 
e_q\big(a({ F^{{L^2m}}(\vu)-F^{{L^2m}}(\vv)}) + \vc\cdot (\vu-\vv)\big),
\end{split}
\end{equation}
where $\phi(q)$ is the Euler totient function. Since
$
F^t(\z) = \tfrac12 A\z\cdot\z -t, 
$
then 
$$
{ F^{{L^2m}}(\vu)-F^{{L^2m}}(\vv)}=(A\vv)\cdot \vw
+ F(\vw)=\vv\cdot A\vw + F(\vw). 
$$
So
$$
e_q\big(a({ F^{{L^2m}}(\vu)-F^{{L^2m}}(\vv)}) + \vc\cdot (\vu-\vv)\big)=
e_q\big(aF(\vw) + \vc\cdot \vw\big)\,
e_q(a\vv\cdot A\vw).
$$
Now we see that the summation over $\vv$ in \eqref{eq:S_q=} 
produces  a zero contribution, unless each component of the vector $A\vw$ is divisible by $q$. This property  holds
for at most a finite number $N$ of vectors $\vw$, where the constant $N$ depends only on $\det A$. 
Thus,  
$$|S_q(\vc)|^2 \lappr \phi(q) {\sum_{a(\!\!\!\!\!\!\mod
		q)}}^* \, \sum_{\vv(\!\!\!\!\!\!\mod q)} 1 \le
\phi^2(q)\, q^{d}. $$   
\end{proof}
	

The lemma's assertion  shows that  the sums $\sigma^{\vc}_p$, defined in \eqref{eq:sigma_p}, are finite:
\begin{corollary}\lbl{cor:sigma}
	If $d\ge 5$, for any prime $p$ we have   $\big|\sigma^{\vc}_p(A, L^2m)\big|\lappr  1$. 
\end{corollary}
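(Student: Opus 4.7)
The plan is a direct application of Lemma~\ref{l:25HB} combined with a geometric series estimate, so the proof is essentially a one-liner.

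First, I would write out the definition of $\sigma_p^{\vc}$ from \eqref{eq:sigma_p}, separating the $l=0$ term (which equals $S_1 = 1$) from the rest, and then estimate each term $p^{-dl}|S_{p^l}(\vc; A, L^2m)|$ for $l \ge 1$ using Lemma~\ref{l:25HB}. This yields
\[
p^{-dl}|S_{p^l}(\vc;A,L^2m)| \lesssim p^{-dl} \cdot p^{l(d/2+1)} = p^{l(1-d/2)}.
\]
Then the full series is bounded by a geometric sum:
\[
|\sigma^{\vc}_p(A, L^2m)| \le 1 + C\sum_{l=1}^\infty p^{l(1-d/2)} = 1 + \frac{C\, p^{1-d/2}}{1 - p^{1-d/2}}.
\]

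The only point requiring care is the uniformity of the bound in $p$. Since $d\ge 3$, one has $1 - d/2 \le -1/2$, so $p^{1-d/2} \le 2^{-1/2}$ for every prime $p \ge 2$. Hence the denominator $1 - p^{1-d/2}$ is bounded below by $1 - 2^{-1/2} > 0$ uniformly in $p$, and the full sum is controlled by a constant depending only on $d$ (and, through Lemma~\ref{l:25HB}, on $A$).

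There is no real obstacle here; the hypothesis $d \ge 3$ is used precisely so that the exponent $1 - d/2$ is strictly negative and bounded away from $0$, which is what guarantees geometric convergence with a ratio uniform in the prime $p$. Were $d = 2$, the series would diverge, and were $d = 3$ exactly, the convergence is the slowest but still geometric.
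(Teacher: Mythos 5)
Your proof is correct and is exactly the argument the paper intends: the corollary is stated immediately after Lemma~\ref{l:25HB} with the remark that the lemma's bound $|S_{p^l}(\vc)|\lappr p^{l(d/2+1)}$ makes the series \eqref{eq:sigma_p} converge, which is precisely your geometric-series estimate with ratio $p^{1-d/2}\le 2^{-1/2}$ uniform in $p$ for $d\ge3$.
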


Recall that  $\sigma(A,L^2m)=\prod_p \sigma_p(A,L^2m)$ (see  \eqref{77}). 

\begin{lemma}\label{l:31HB}
	For any $d\ge5$  and $1\le X \le\infty$
	 we have $$\sum_{q\leq X}q^{-d} S_q(0) =  \sigma(A,L^2m) +
        O(X^{-d/2+2}).$$
	In particular,  $\sigma(A,L^2m)=\sum_{q=1}^\infty q^{-d} S_q(0) $. So  $|\sigma(A,L^2m)|\lappr 1$ in view  Lemma~\ref{l:25HB}. 
\end{lemma}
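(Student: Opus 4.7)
The plan has three ingredients: an absolute convergence / tail bound coming from Lemma~\ref{l:25HB}, multiplicativity of $q\mapsto q^{-d}S_q(0)$ via the Chinese Remainder Theorem, and an Euler product identification of $\sum_{q=1}^\infty q^{-d}S_q(0)$ with $\sigma(A,L^2m)$.

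First I would handle convergence. Lemma~\ref{l:25HB} gives $|S_q(0)|\lappr q^{d/2+1}$, hence
\[
|q^{-d} S_q(0)|\lappr q^{1-d/2}.
\]
Since $d\ge 5$, the exponent satisfies $1-d/2\le -3/2$, so comparing with $\int_X^\infty x^{1-d/2}\,dx$ one obtains
\[
\sum_{q>X} |q^{-d} S_q(0)|\lappr X^{2-d/2}.
\]
This immediately yields the claimed $O(X^{-d/2+2})$ remainder once the infinite sum is identified with $\sigma(A,L^2m)$, and in particular shows $|\sigma(A,L^2m)|\lappr 1$.

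Next I would establish that, for fixed $A$ and $t=L^2m$, the function $q\mapsto S_q(0;A,t)$ is multiplicative. Given coprime $q_1,q_2$ with $q=q_1q_2$, the Chinese Remainder Theorem gives $\Z/q\Z\cong\Z/q_1\Z\times\Z/q_2\Z$ via $\vb\mapsto(\vb_1,\vb_2)$, and every $a\pmod q$ with $(a,q)=1$ may be written uniquely as $a\equiv a_1 q_2\overline{q_2}+a_2 q_1\overline{q_1}\pmod q$ with $(a_i,q_i)=1$, where $\overline{q_{3-i}}$ is an inverse of $q_{3-i}$ mod $q_i$. Substituting in \eqref{eq:S} with $\vc=0$, the exponential splits as
\[
e_q\bigl(aF^t(\vb)\bigr)=e_{q_1}\bigl(a_1' F^t(\vb_1)\bigr)\,e_{q_2}\bigl(a_2' F^t(\vb_2)\bigr),
\]
where $a_i'$ runs over reduced residues mod $q_i$ as $a$ runs over reduced residues mod $q$. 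Summing over $\vb_1,\vb_2$ and $a_1',a_2'$ gives $S_q(0)=S_{q_1}(0)S_{q_2}(0)$, i.e., $q\mapsto q^{-d}S_q(0)$ is multiplicative.

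Finally, combining multiplicativity with the absolute convergence established above, the Dirichlet series $\sum_{q=1}^\infty q^{-d}S_q(0)$ factors as the Euler product $\prod_p\sum_{l=0}^\infty p^{-dl}S_{p^l}(0)=\prod_p\sigma_p(A,L^2m)=\sigma(A,L^2m)$; the product converges since from Lemma~\ref{l:25HB} each local factor satisfies $\sigma_p=1+O(p^{1-d/2})$. The tail bound from the first step then produces the desired asymptotic. The only mildly delicate step is the CRT bookkeeping for multiplicativity — it is entirely standard, but one must check that the reparametrisation of $(a,\vb)$ respects the coprimality condition $(a,q)=1$, which is exactly what the above decomposition ensures.
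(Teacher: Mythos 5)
Your proposal is correct and follows essentially the same route as the paper: the tail bound $\sum_{q>X}q^{-d}|S_q(0)|\lappr X^{2-d/2}$ from Lemma~\ref{l:25HB}, multiplicativity $S_{qq'}(0)=S_q(0)S_{q'}(0)$ for $(q,q')=1$ via the Chinese Remainder Theorem (the paper parametrises $a=qa_{q'}+q'a_q$ and $\vv=q\bar q\vv_{q'}+q'\bar q'\vv_q$, which is the same bookkeeping you sketch), and the identification of the absolutely convergent Dirichlet series with the Euler product $\prod_p\sigma_p$. Your observation that $\sigma_p=1+O(p^{1-d/2})$ is a slightly more explicit way of justifying convergence of the product than the paper's limit of partial products over the sets $P_n$, but the content is identical.
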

\begin{proof}
We start by showing the multiplicative property of trigonometric sums
\be\lbl{Smult}
S_{qq'}(0) = S_q(0) S_{q'}(0)\,,
\ee
whenever $(q,q')=1$ (cf. Lemma { 23} from \cite{HB}).
By definition
$$
S_{qq'}(0) = {\sum_{ a(\!\!\!\!\!\!\mod qq')}}^* \,
\sum_{\vv(\!\!\!\!\!\!\mod qq')} e_{qq'}(aF^{{L^2m}}(\vv) )\,.
$$
When $(q,q') = 1$ we can replace the summation on $a$ ${(\!\!\!\! \mod qq')}$
 by a double
summation on $a_q$ modulo $q$ and $a_{q'}$ modulo $q'$ by writing $a=q
a_{q'}+q'a_q$. Thus 
$$
S_{qq'}(0) ={\sum_{ a_q(\!\!\!\!\!\!\mod q)}}^* \, {\sum_{ a_{q'}(\!\!\!\!\!\!\mod q')}}^* 
\sum_{\vv(\!\!\!\!\!\!\mod qq')} e_{q}(a_qF^{{L^2m}}(\vv) )
e_{q'}(a_{q'}F^{{L^2m}}(\vv) )  \,.
$$
Then we replace the summation on $\vv$ ${(\!\!\!\! \mod qq')}$ with the double summation
on $\vv_q$ modulo $q$ and $\vv_{q'}$ modulo $q'$ by writing $\vv=
q\bar q \vv_{q'} + q'\bar q' \vv_q$, where $\bar q$ and $\bar q'$ are
defined through relations $q\bar q=1\,(\!\!\!\mod q')$ and $q'\bar q' = 1\,(\!\!\!\mod q)$. We observe that
$$
F^{{L^2m}}(\vv) = q^2\bar q^2F(\vv_{q'}) + q'^2\bar q'^2F(\vv_{q}) +
q\bar qq'\bar q' A\vv_{q'}\cdot \vv_q -{L^2m} \,,
$$
so that
$$
e_q(a_qF^{{L^2m}}(\vv))= e_q(a_qq'^2\bar q'^2 F(\vv_q) -a_q{L^2m}) =
e_q(a_q F^{{L^2m}}(\vv_q)),
$$
by the definition of $\bar q'$ and since $e_q(q N)=1$ for any integer $N$. Similar, 
$$
e_{q'}(a_{q'}F^{{L^2m}}(\vv))=
e_{q'}(a_{q'} F^{{L^2m}}(\vv_{q'}))  \,.
$$
This gives \eqref{Smult}.

Next we note that, due to Lemma~\ref{l:25HB}, 
\be\lbl{sum_q}
\sum_{q\geq X}q^{-d} |S_q(0)|\lappr \sum_{q\geq X}q^{-d/2+1} \lappr X^{-d/2+2}.
\ee
By \eqref{Smult} and the definition of  $\sigma$, 
$$
\sigma=\lim_{n\to\infty} \sigma^n, \qquad \sigma^n=
\prod_{p\leq n}\sum_{l=0}^n p^{-dl}S_{p^l}(0)=
\sum_{q\in P_{n}} q^{-d}S_q(0),
$$
where $p$ are primes and $P_{n}$ denotes the set of natural numbers $q$ with prime factorization of the form  $q=p_1^{k_1}\cdots p_m^{k_m}$, where $2\le p_1<p_2\dots <p_m\le n$,  $k_j\leq n$ and $m\ge0$ ($m=0$ corresponds to $q=1$). Since any $q\le n$
belongs to $P_n$, then according to \eqref{sum_q}, 
$$
\Big|\sum_{q\in P_{N}} q^{-d}S_q(0) - \sum_{q\leq  X} q^{-d}S_q(0)\Big|  \lappr X^{-d/2 +2} \quad \forall\, N\ge X,
$$
for any finite $X>0$. Passing in this estimate to a limit as $N\to\infty$ we recover the assertion if $X<\infty$.  Then the 
result with $X=\infty$ follows in an obvious way. 
\end{proof}

\section{Singular integrals  $I^0_q$}
\label{sec:3}

\subsection{Properties of $h(x,y)$}
Following \cite{HB}, Section~3, we construct the function $h(x,y)\in C^\infty(\R_>,\R)$,  entering Theorem~\ref{th:1},
starting from the weight function
$w_0\in C_0^\infty(\R)$, defined as
\begin{equation}\label{eq:or_weight}
w_0(x) = \left\{\begin{array}{cc}
\exp\left(\frac1{x^2-1}\right) & \mbox{for }|x|<1\\
0 & \mbox{for }|x|\ge 1
\end{array}
\right. \,.
\end{equation}
We denote 
$
c_0:= \int_{-\infty}^{\infty}w_0(x)\,dx 
$
and introduce the shifted weight function
\begin{equation*}
  \omega(x)=\tfrac{4}{c_0}w_0(4x-3)\,,
\end{equation*}
which  of course belongs  to $C^\infty_0(\R)$. Obviously, 
 $ 0\le \omega\le 4e^{-1}/c_0$, $\omega$ is supported on $(1/2,1)$, and
$
\int_{-\infty}^{\infty}\omega(x)\,dx =1 \,.
$

The required function $h: \R_{>0}\times\R\to\R$ is defined in terms of $\omega$ as $ h(x,y) := h_1(x)-h_2(x,y)$ with 
\begin{equation}\label{eq:h}
  \begin{split}
h_1(x):={ \sum_{j=1}^\infty}\frac{1}{xj}\omega(xj)\,,\quad h_2(x,y):= { \sum_{j=1}^\infty}\frac{1}{xj}\omega \left(\frac{|y|}{x
  j}\right)  \,.
\end{split}
\end{equation}
For any fixed pair $(x,y)$ each of the two 
sum in $j$ contains a finite number of nonzero terms.  So $h$ is a smooth function. 

In \cite{HB}, Section~3, it is shown how to derive Theorem~\ref{th:1} from
the definition \eqref{eq:h}.\footnote{Actually it is proved there that any 
  function $h$ defined through \eqref{eq:h} with arbitrary weight function $\omega\in
  C_0^\infty(\R)$, supported on $[1/2,1]$, may provide a
  representation of $\delta(n)$. } Here we limit ourselves
 to providing some relevant properties of $h$,  
proved in Section~4 of \cite{HB}. In particular these properties imply that for small $x$,
  $h(x,y)$ behaves as the Dirac delta function in $y$

\begin{lemma}[Lemma 4 in \cite{HB}]\label{l:4}
  We have:
  \begin{enumerate}
\item    $h(x,y)=0$ if $x\ge 1$ and $|y|\le x/2$.
\item If $x\le 1$ and $|y|\le x/2$, then  $h(x,y)=h_1(x)$, and for any $m\geq 0$
  $$ 
  \Big|\frac{\partial^m h(x,y)}{\partial x^m} \Big| \lappr_m \frac{1}{x^{m+1}}\,.
  $$
  \item If $|y|\ge x/2$, then for any $m,n\geq 0$
  $$
   \Big|\frac{\partial^{m+n} h(x,y)}{\partial x^m\partial y^n} \Big| \lappr_{m,n}
  \frac{1}{x^{m+1}|y|^n}\,.
  $$
  \end{enumerate}
  \end{lemma}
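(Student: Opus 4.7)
\textbf{Proof plan for Lemma \ref{l:4}.} The plan is to exploit the compact support property of $\omega$, which sits in $[1/2,1]$, to first kill each of the series $h_1$ and $h_2$ on the appropriate region, and then bound derivatives via a direct count of nonvanishing terms.

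First I would handle part (1). Observe that $\omega(xj)\neq0$ forces $xj\in(1/2,1)$, so if $x\ge 1$ then every term of $h_1(x)$ vanishes and $h_1(x)=0$. Similarly $\omega(|y|/(xj))\neq 0$ forces $|y|/(xj)\in(1/2,1)$, i.e.\ $j\in(|y|/x,\,2|y|/x)$; when $|y|\le x/2$ this interval lies in $(0,1]$, so no positive integer $j$ sits inside, and $h_2(x,y)=0$. Hence $h(x,y)=0$ on the region of part (1). The second observation, applied alone, also gives $h_2(x,y)=0$ whenever $|y|\le x/2$ (with no hypothesis on $x$), which is exactly what is needed for part (2).

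For the derivative estimate in part (2), I would rewrite
\[
h_1(x)=\sum_{j=1}^{\infty}\psi(xj),\qquad \psi(u):=\omega(u)/u,
\]
where $\psi\in C_0^\infty(\R)$ is supported in $[1/2,1]$. Differentiating termwise (the sum is finite for each fixed $x$) gives
\[
h_1^{(m)}(x)=\sum_{j=1}^{\infty} j^{m}\psi^{(m)}(xj).
\]
The nonzero terms require $j\in[1/(2x),1/x]$, contributing $O(1/x)$ indices, each bounded in absolute value by $j^{m}\|\psi^{(m)}\|_\infty\le C_m x^{-m}$. Summing yields $|h_1^{(m)}(x)|\lesssim_m x^{-m-1}$.

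For part (3) I would treat $h_1$ and $h_2$ separately. For $h_1$, only $n=0$ is meaningful (it does not depend on $y$), and the bound from part (2) gives exactly $x^{-m-1}=x^{-m-1}|y|^{-n}$ when $n=0$. For $h_2$, assume $y>0$ (the case $y<0$ is symmetric since only $|y|$ appears, and $|y|\ge x/2>0$ so smoothness is not an issue) and write $h_2(x,y)=\sum_{j\ge1}G(xj,y)$ with $G(s,y):=s^{-1}\omega(y/s)$. A direct induction using the chain rule shows that $\partial_s^m\partial_y^n G(s,y)$ is a linear combination of terms of the form $y^{a}s^{-m-n-1-a}\omega^{(b)}(y/s)$ with $0\le a\le m$ and $b\le m+n$. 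On the support of $\omega(y/s)$ one has $y/s\asymp 1$, so $|\partial_s^m\partial_y^n G(s,y)|\lesssim_{m,n} s^{-m-n-1}$. Consequently
\[
\bigl|\partial_x^m\partial_y^n h_2(x,y)\bigr|
=\Bigl|\sum_{j}j^{m}\,(\partial_s^m\partial_y^n G)(xj,y)\Bigr|
\lesssim_{m,n}\sum_{j\in(y/x,\,2y/x)}\frac{j^m}{(xj)^{m+n+1}}.
\]
The sum on the right has $O(y/x)$ terms, each of size $\sim x^{-m-n-1}(y/x)^{-n-1}$, so it is $O\bigl(x^{-m-1}y^{-n}\bigr)$. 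Combining with the $h_1$ bound finishes part (3).

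The only delicate point is the bookkeeping in part (3): making sure the $j^m$ from differentiating the argument $xj$ cancels against the $j^{-m-n-1}$ appearing in $\partial_s^m\partial_y^n G$, and that the number of contributing indices $j$ scales precisely as $y/x$. Everything else is a straightforward consequence of $\omega$ being compactly supported and smooth.
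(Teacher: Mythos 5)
Your argument is correct. The paper itself gives no proof of this lemma — it is quoted verbatim from Heath-Brown (Lemma 4 of \cite{HB}, proved in his Section 4) — and your direct computation from the definition \eqref{eq:h}, using that $\omega$ is supported in $(1/2,1)$ to locate the $O(1/x)$ (resp.\ $O(\lan y\ran/x)$) contributing indices $j$ and then bounding each differentiated term, is exactly the standard route taken there.
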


\begin{lemma}[Lemma 5 in \cite{HB}]\label{l:5}
  Let $m,n,N\ge 0$. Then for any $x,y$
  $$
   \Big|\frac{\partial^{m+n} h(x,y)}{\partial x^m\partial y^n}  \Big|\lappr_{N,m,n}
  \frac{1}{x^{1+m+n}}\left(\delta(n)x^N + \min\left\{1,\left(x/|y|\right)^N\right\} \right)\,.
  $$
  \end{lemma}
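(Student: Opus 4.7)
\emph{Proof plan.} I would split the analysis on the size of $|y|$ relative to $x$. For $|y|\le x/2$, Lemma~\ref{l:4}(i)--(ii) says $h(x,y)$ equals either $0$ (if $x\ge1$) or $h_1(x)$ (if $x<1$); in either case $\partial_y^n h=0$ for $n\ge1$, while $|\partial_x^m h_1|\lesssim x^{-m-1}$ by direct termwise estimation of \eqref{eq:h} (at most $O(1/x)$ nonzero terms, each $x$-derivative costing a factor $j^m\lesssim x^{-m}$). Since $\min\{1,(x/|y|)^N\}=1$ on this set, the stated inequality is immediate.

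For $|y|>x/2$ the plan is to apply Poisson summation to both $h_1$ and $h_2$ and exploit a cancellation between their leading Fourier modes. Set $\phi(u):=\omega(u)/u$ and $f_y(u):=\omega(|y|/u)/u$, both smooth and compactly supported (in $[1/2,1]$ and $[|y|,2|y|]$ respectively) when extended by zero to $\R$. Then $h_1(x)=\sum_{j\in\Z}\phi(xj)$ and $h_2(x,y)=\sum_{j\in\Z}f_y(xj)$, and Poisson summation (with the convention $\hat g(\xi)=\int g(u)e^{-2\pi i\xi u}du$) yields
\[
h_1(x)=\frac{1}{x}\sum_{k\in\Z}\hat\phi(k/x),\qquad h_2(x,y)=\frac{1}{x}\sum_{k\in\Z}\widehat{f_y}(k/x).
\]
The substitution $v=|y|/u$ in $\int f_y(u)\,du$ gives $\widehat{f_y}(0)=\int\omega(v)/v\,dv=\hat\phi(0)$, so in $h=h_1-h_2$ the zeroth Fourier modes cancel and only the tails with $k\ne0$ contribute.

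To control these tails I would use repeated integration by parts. The standard estimate for a smooth compactly supported function gives $|\hat\phi(k/x)|\lesssim_N(x/|k|)^N$ for every $N$. Rewriting $\widehat{f_y}(k/x)=-\int_{1/2}^{1}(\omega(v)/v)e^{-2\pi i(k|y|/x)/v}dv$ and integrating by parts in $v$ -- the phase derivative being bounded below by $|k||y|/x$ on the support -- yields $|\widehat{f_y}(k/x)|\lesssim_N(x/(|k||y|))^N$. Summing over $k\ne0$ produces $|h(x,y)|\lesssim_N x^{-1}(x^N+(x/|y|)^N)$, which handles the case $n=0$. For $n\ge1$, $\partial_y^n h=-\partial_y^n h_2$ and the $k=0$ mode vanishes because $\widehat{f_y}(0)$ is $y$-independent; differentiating the oscillatory integral produces $n$ factors of $k/x$ that are overpowered by further integration by parts, giving $|\partial_y^n\widehat{f_y}(k/x)|\lesssim_N(|k|/x)^n(x/(|k||y|))^N$.

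The main obstacle will be the bookkeeping for the $\partial_x^m$ derivatives, since each $\partial_x$ acts both on the prefactor $1/x$ and inside $\widehat{f_y}(k/x)$ through its argument. Expanding via Leibniz and the chain rule, after $m$ differentiations one obtains a sum of terms of the form $c_{\ell,a}\,x^{-\ell}\widehat{f_y}^{(a)}(k/x)$ with $\ell\ge 1$ and $\ell+a=m+1$; each is bounded via further integration by parts by $x^{-1-m-n}(x/|y|)^N|k|^{n+a-N}$, so choosing $N$ large enough compared to $m+n$ makes the $k$-sum absolutely convergent and yields $|\partial_x^m\partial_y^n h_2|\lesssim_N x^{-1-m-n}(x/|y|)^N$. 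Combining this with the crude bound $\lesssim x^{-1-m-n}$ (obtained by just counting the $\lesssim|y|/x$ nonzero terms in the defining sum, each of size $\lesssim(x/|y|)^{n+1}x^{-m}$) upgrades $(x/|y|)^N$ to $\min\{1,(x/|y|)^N\}$ and completes the proof.
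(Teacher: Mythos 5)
The paper itself does not reprove Lemma~\ref{l:5}; it is imported verbatim from Heath-Brown's paper, so there is no ``paper's own proof'' to compare against. Your Poisson-summation route is, as far as I can tell, the same strategy Heath-Brown uses, and it is essentially correct. The crucial observation --- that the change of variables $v=|y|/u$ shows $\widehat{f_y}(0)=\hat\phi(0)$, so the $k=0$ modes of $h_1$ and $h_2$ cancel identically --- is right, and it is exactly what lets you win the extra $x^N$ in the $n=0$ case. The split $|y|\le x/2$ versus $|y|>x/2$, together with Lemma~\ref{l:4}, the termwise/crude bound to upgrade $(x/|y|)^N$ to $\min\{1,(x/|y|)^N\}$, and non-stationary phase applied to $\widehat{f_y}(k/x)=\int_{1/2}^1 (\omega(v)/v)\,e^{-2\pi i(k|y|/x)/v}\,dv$ (the minus sign you wrote in front of this integral should not be there, but it is immaterial for the size estimate), are all sound.

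Two bookkeeping slips, neither fatal. First, the expansion of $\partial_x^m\bigl[x^{-1}\widehat{f_y}(k/x)\bigr]$ does not produce terms $c_{\ell,a}\,x^{-\ell}\widehat{f_y}^{(a)}(k/x)$ with $\ell+a=m+1$: each $\partial_x$ acting on the inner argument contributes a factor $k$ as well as a power of $x^{-1}$, so the correct shape is $c_{m,a}\,k^a\,x^{-(m+1+a)}\,\widehat{f_y}^{(a)}(k/x)$, $0\le a\le m$, i.e.\ $\ell-a=m+1$ with the extra $|k|^a$ factor explicit. This matters because the $k$-power must be carried into the $k$-sum; your subsequent displayed bound $x^{-1-m-n}(x/|y|)^N|k|^{n+a-N}$ does account for it correctly, so the final estimate comes out right --- but the stated constraint on $(\ell,a)$ is wrong as written. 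Second, in the crude bound you claim each of the $\lesssim|y|/x$ surviving terms has size $\lesssim(x/|y|)^{n+1}x^{-m}$; the correct per-term size is $\lesssim(x/|y|)^{n+1}x^{-m-n-1}$, i.e.\ you dropped the $x^{-(n+1)}$ that comes from differentiating $\omega(|y|/(xj))$ in $y$ ($n$ times, each costing $1/(xj)\sim1/|y|$) and the prefactor $1/(xj)$. Multiplying by the $O(|y|/x)$ term count then gives $\lesssim(x/|y|)^n\,x^{-1-m-n}\lesssim x^{-1-m-n}$ on $|y|>x/2$, which is the bound you want; so again the conclusion is unaffected, only the intermediate claim is mis-stated. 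I would also note explicitly that you need $N>n+m+1$ for the $k$-sums to converge, and that values $N\le n+m+1$ are then recovered by monotonicity of $(x/|y|)^N$ on $|y|\ge x$ combined with the crude bound on $x/2<|y|<x$.
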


Lemma~\ref{l:5}  with $m=n=N=0$ immediately implies
\begin{corollary}\lbl{c:h}
	For any $x,y\in\R_>\times\R$ we have $|h(x,y)|\lappr 1/x$.
\end{corollary}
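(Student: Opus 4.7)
The plan is to derive this as an immediate specialization of Lemma~\ref{l:5}. That lemma provides, for any nonnegative integers $m, n, N$, a bound on $|\partial^{m+n} h / \partial x^m \partial y^n|$ of the form $x^{-(1+m+n)}\bigl(\delta(n) x^N + \min\{1, (x/|y|)^N\}\bigr)$ (up to a constant depending on $N, m, n$).

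I would set $m = n = N = 0$. Then the derivative on the left is simply $h(x,y)$, and on the right the prefactor $x^{-(1+m+n)}$ becomes $x^{-1}$. Since $n = 0$ gives $\delta(n) = 1$ and $N = 0$ gives $x^N = 1$, the first term in the parenthesis equals $1$; the second term $\min\{1, (x/|y|)^0\} = \min\{1, 1\} = 1$ as well. So the right-hand side is $\lesssim x^{-1}(1+1) \lesssim 1/x$, uniformly in $y$, which is exactly the claim.

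No real obstacle is expected here: the corollary is a direct substitution, the only thing to verify being that $N = 0$ is a permissible choice in Lemma~\ref{l:5} (it is, since the lemma is stated for $N \geq 0$). I would present the proof as a single line invoking Lemma~\ref{l:5} with these parameter values.
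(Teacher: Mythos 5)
Your proposal is correct and coincides with the paper's own argument: the text immediately preceding Corollary~\ref{c:h} states that it follows from Lemma~\ref{l:5} with $m=n=N=0$, which is precisely the substitution you carry out. Nothing further is needed.
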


\begin{lemma}[Lemma 6 in \cite{HB}]\label{l:6}
  Fix $ X\in \R_{>0}$ and  $0<x<C\min\left\{1,X\right\}$, for some $C>0$. Then for any $N\ge0$, 
  $$
 \int_{-X}^X h(x,y)\,dy = 1 + O_{N,C}\left(Xx^{N-1}\right) +
 O_{N,C}\left(\frac{x^N}{X^N} \right).
  $$
  \end{lemma}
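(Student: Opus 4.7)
The plan is to decompose $\int_{-X}^X h(x,y)\,dy = 2Xh_1(x) - \int_{-X}^X h_2(x,y)\,dy$ and show that both halves contain a common leading term of size $X/x$ that cancels, leaving the constant $1$ plus the advertised small remainders. The two halves will be evaluated by different Fourier techniques: Poisson summation for $h_1$, and a floor-function/Fourier-series trick for $h_2$.

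For the $h_2$ half, I would first apply Tonelli and the substitution $u=y/(xj)$ to obtain $\int_{-X}^X h_2\,dy = 2\sum_{j\ge 1} f(X/(xj))$ with $f(a):=\int_0^a\omega(v)\,dv$. Writing $f(a)=\int_0^\infty \omega(u)\mathbf 1_{u<a}\,du$ and swapping sum and integral once more,
$$
\sum_{j\ge 1} f(X/(xj)) \;=\; \int_0^\infty \omega(u)\lfloor X/(xu)\rfloor\,du \;=\; \frac{X}{x}\int_0^\infty\frac{\omega(u)}{u}\,du \,-\, \int_0^\infty\omega(u)\{X/(xu)\}\,du,
$$
after decomposing $\lfloor t\rfloor = t-\{t\}$. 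The first term is the dangerous piece of size $X/x$ that must be matched by the $h_1$ computation. For $h_1(x)$, I would apply Poisson summation to $\Phi(y):=\omega(xy)/(xy)$ (set $\Phi(0):=0$), which lies in $C_c^\infty(\R)$ because $\omega$ is supported in $(1/2,1)$, hence $\supp\Phi\subset(1/(2x),1/x)$. The zero-mode is $\widehat\Phi(0)=\tfrac{1}{x}\int_0^\infty \omega(u)/u\,du$, and the nonzero modes satisfy $\widehat\Phi(k)=\tfrac{1}{x}\widehat g(k/x)$ with $g(u):=\omega(u)/u\in C_c^\infty$; standard Fourier-decay bounds then give $|\widehat\Phi(k)|\lappr_N x^{N-1}|k|^{-N}$ for $k\neq 0$, so $h_1(x) = \tfrac 1x\int\omega/u\,du + O_N(x^{N-1})$. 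Multiplying by $2X$ produces exactly the same $\frac{2X}{x}\int\omega/u\,du$ term that appeared above; these cancel, leaving
$$
\int_{-X}^X h(x,y)\,dy \;=\; 2\int_0^\infty \omega(u)\{X/(xu)\}\,du \;+\; O_N(Xx^{N-1}).
$$

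To finish, I would use the a.e.\ Fourier expansion $\{t\}-\tfrac12 = -\sum_{k\neq 0}\frac{e^{2\pi ikt}}{2\pi ik}$. The constant $\tfrac12$ contributes $2\int_0^\infty \omega\cdot\tfrac12\,du = 1$, while each oscillatory piece becomes $I_k:=\int_0^\infty \omega(u)\,e^{2\pi ikX/(xu)}\,du$. On $\supp\omega\subset(1/2,1)$ the phase $\phi_k(u)=2\pi kX/(xu)$ satisfies $|\phi_k'(u)| = 2\pi|k|X/(xu^2)\gtrsim |k|X/x$, so repeated non-stationary-phase integration by parts gives $|I_k|\lappr_N (x/(|k|X))^N$ for any $N$. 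Summing in $k$ yields $2\int \omega(u)\{X/(xu)\}\,du = 1 + O_N((x/X)^N)$, which combined with the previous display is precisely the claim, with the two error terms matching $Xx^{N-1}$ and $x^N/X^N$ respectively.

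The heart of the argument is the exact cancellation of the dominant $\frac{2X}{x}\int \omega/u$ piece between the Poisson evaluation of $2Xh_1(x)$ and the floor-function decomposition of the $h_2$-integral; without it, a residual term of order $X/x$, which can be arbitrarily large, would survive. The hypothesis $x<C\min\{1,X\}$ is used in two places: smallness of $x$ ensures the Poisson remainder $Xx^{N-1}$ is small, while smallness of $x/X$ makes the non-stationary-phase error $(x/X)^N$ small. A secondary technical point is the justification of integrating the conditionally convergent Fourier series for $\{t\}$ termwise against $\omega(u)$; this is legitimate because the integration-by-parts bound on $I_k$ makes the rearranged series absolutely convergent, so a symmetric truncation argument (or dominated convergence applied after one integration by parts) validates the exchange.
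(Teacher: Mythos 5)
Your proof is correct. The paper itself does not reproduce Heath--Brown's proof of this lemma (it simply cites \cite{HB}, Lemma~6), so the comparison must be made against his original argument. Both strategies necessarily hinge on the same structural fact: in the split $\int_{-X}^X h = 2Xh_1(x) - \int_{-X}^X h_2(x,y)\,dy$ both pieces carry a dominant contribution $\frac{2X}{x}\int_0^\infty\omega(u)u^{-1}\,du$ that must cancel exactly, leaving the constant $1$ plus small errors. Where you diverge in technique is in how the two pieces are evaluated: you use Poisson summation on $\Phi(y)=\omega(xy)/(xy)$ for the $h_1$ half, and for the $h_2$ half you pass from $\sum_j\int_0^{X/(xj)}\omega$ to $\int\omega(u)\lfloor X/(xu)\rfloor\,du$ and then expand the sawtooth $\{t\}-\tfrac12$ in its Fourier series, killing the oscillatory integrals $I_k$ by non-stationary phase thanks to $|\phi_k'|\gtrsim |k|X/x$ on $\supp\omega\subset(1/2,1)$; Heath--Brown instead works more directly with the sums via Euler--Maclaurin-type arguments built on his pointwise bounds for $h$ (his Lemmas~4--5). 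Your route is cleaner to state and more modular; his stays closer to the explicit structure of $h_1,h_2$. Two small points worth keeping in mind when writing this up formally: (i) the termwise integration of the conditionally convergent sawtooth series is justified exactly as you say, by uniform boundedness of the symmetric partial sums $S_K$ together with dominated convergence, after which the absolutely convergent $\sum_k I_k/k$ takes over; (ii) for $1\le x< C$ (allowed since only $x<C\min\{1,X\}$ is assumed) the support of $\Phi$ contains no integers so $h_1(x)=0$, and the Poisson error estimate degenerates to the trivial $O_C(1)\le O_C(x^{N-1})$, so the stated bound still holds with a $C$-dependent constant, as required by the $O_{N,C}$ notation.
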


\begin{lemma}[Lemma 8 in \cite{HB}]\label{l:8}
  Fix $X\in \R_{>0}$ and $n\in \N$. Let $x<C\min\left\{1, X\right\} $ for
  $C>0$. Then 
  $$
 \Big|\int_{-X}^X y^nh(x,y)\,dy  \Big|\lappr_{N,C} X^n\left(Xx^{N-1} +
\frac{x^N}{X^N} \right)\,.
  $$
\end{lemma}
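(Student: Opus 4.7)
My plan is to exploit the $y$-symmetry of $h$ and then to carry out an explicit cancellation between $h_1$ and $h_2$ via Poisson summation.

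Since $h_2(x,y)$ depends on $y$ only through $|y|$, the function $h(x,y)$ is even in $y$, so for odd $n$ the integral vanishes and the bound is trivial. For even $n$ I write
\[
\int_{-X}^X y^n h(x,y)\,dy = \frac{2X^{n+1}}{n+1}\,h_1(x) - 2\int_0^X y^n h_2(x,y)\,dy,
\]
and unfolding $h_2$ and substituting $u=y/(xj)$ turns the second integral into $\sum_{j\ge 1}(xj)^n\Omega_n(X/(xj))$, where $\Omega_n(v):=\int_0^v u^n\omega(u)\,du$.

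Next I apply Poisson summation to each sum, extending the summand by zero for $j\le 0$. For $h_1(x)=\sum_j \omega(xj)/(xj)$ the summand lies in $C_0^\infty(\R)$, so the zero Fourier mode is $\nu/x$ with $\nu:=\int\omega(s)/s\,ds$ while the non-zero modes contribute $O_N(x^{N-1})$. For the $h_2$-sum, setting $f(j):=j^n\Omega_n(X/(xj))$, a change of variable together with one integration by parts yields $\hat f(0)=\nu (X/x)^{n+1}/(n+1)$. The two leading contributions $\nu X^{n+1}/((n+1)x)$ therefore cancel exactly in the difference, and the $h_1$-residual contributes $O(X^{n+1}x^{N-1})$, which is the first half of the target bound.

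For the remaining term $x^n\sum_{k\ne 0}\hat f(k)$, observe that $f(j)=\mu_n j^n$ on $(0,X/x]$ (with $\mu_n:=\int_0^1 u^n\omega(u)\,du$) and $f(j)=0$ for $j\le 0$, so $f$ is $C^{n-1}$ at the origin while its distributional derivative $f^{(n+1)}$ decomposes as $\mu_n n!\,\delta_0$ plus a smooth classical piece supported on $[X/x,2X/x]$. Integrating by parts $n+1$ times gives $\hat f(k)=(2\pi ik)^{-(n+1)}\widehat{f^{(n+1)}}(k)$; the $\delta_0$-contribution to $\sum_{k\ne 0}\hat f(k)$ is proportional to $\sum_{k\ne 0}(2\pi ik)^{-(n+1)}$, which vanishes by parity when $n$ is even. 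For the classical piece, the $C_0^\infty$ smoothness and the flat boundary behaviour of $\omega$ at $1/2$ and $1$ force it to vanish to all orders at the endpoints of its support, so $M$ further integrations by parts yield decay of order $(x/(X|k|))^M$; choosing $M=N-n$ gives $x^n\sum_{k\ne 0}\hat f(k)=O(x^N/X^{N-n})$, matching the second half of the target bound. The principal obstacle is precisely this last step: extracting the sharp decay $(x/X)^{N-n}$ rather than the naive $O(x^n)$ delivered by pointwise bounds on $h$ requires using both the parity cancellation of the $\delta_0$-contribution and the all-orders vanishing of the classical part at its endpoints, both consequences of the specific structure of $\omega$.
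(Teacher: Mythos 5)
Your argument is correct and complete. The paper states Lemma~\ref{l:8} as a citation of \cite{HB} without reproducing a proof, so there is no in-paper proof to compare against; on its own merits your derivation is sound. You reduce by parity to even $n$, split $h=h_1-h_2$, apply Poisson summation to each sum, and observe that the zero Fourier mode $\hat f(0)=\nu(X/x)^{n+1}/(n+1)$ exactly cancels the main term $2X^{n+1}h_1(x)/(n+1)\sim 2\nu X^{n+1}/((n+1)x)$. I verified both the integration by parts giving $\hat f(0)$ and the rescaling $g(j)=(x/X)\psi^{(n+1)}(xj/X)$ that yields $|\hat g(k)|\lesssim (x/(X|k|))^M$ uniformly in $x,X$; the parity cancellation of the $\delta_0$-contribution to $\sum_{k\neq 0}(2\pi ik)^{-(n+1)}$ is also correct for even $n$, and it is essential since that term would otherwise contribute an unacceptable $O(x^n)$. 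This is in spirit the same sum-versus-integral strategy Heath-Brown uses throughout his Lemmas 6--9, exploiting the flatness of $\omega$ at the endpoints of its support.

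Two small points worth making explicit. First, the integration by parts with $M=N-n$ requires $N>n$; for $N\le n$ one should dispose of the bound directly from $|h(x,y)|\lesssim 1/x$, which gives $|\int_{-X}^{X}y^n h\,dy|\lesssim X^{n+1}/x\le X^n\cdot Xx^{N-1}$ once $x\lesssim 1$. Second, the applicability of Poisson summation to $f$ deserves a sentence: $f$ is compactly supported, $C^{n-1}$, with $f^{(n)}$ of bounded variation, so $\hat f(k)=O(|k|^{-n-1})$ and the Poisson formula holds with absolutely convergent right-hand side; this uses $n\ge1$, which is guaranteed by the paper's convention that $\N$ denotes the positive integers (for $n=0$ there would be a jump at $j=0$ and an extra $-1/2$ correction, as indeed occurs in Lemma~\ref{l:6}).
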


The previous results are used to prove the key Lemma~9 of \cite{HB},
which can be extended to the following
\begin{lemma}\label{l:9} Let a function  $f\in \cC^{M-1,0}(\R)\cap L^1(\R)$, $M\geq 1$, be such that its $(M-1)$-st derivative 
  $f^{(M-1)}$ is absolutely continuous on $[-1,1]$, and let  $0<x\le C$ for some   $C>0$. 
  Then for any $0<\beta\le 1$ and any $N\ge 0$, 
    \begin{equation} \lbl{l9HB}
      \begin{split}
\int_\R f(y) h(x,y) \, dy  = & f(0) + O_{M}\left(\frac{x^{M}}{\beta^{M+1}}
\frac{1}{X}\int_{-X}^{X}|f^{(M)}(y)|\,dy\right)\\
&+O_{N,C}\left((x^N+\beta^N)\left(\|f\|_{M-1,0}+x^{-1} |f|_{L_1}\right)\right)\,,
      \end{split}
  \end{equation}
  where $X:=\min\left\{1,x/\beta\right\}$.
\end{lemma}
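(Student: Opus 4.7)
The strategy is to split
$$
\int_\R f(y) h(x,y)\,dy = \int_{|y|\le X} f(y) h(x,y)\,dy + \int_{|y| > X} f(y) h(x,y)\,dy
$$
and handle the two pieces with complementary tools: a Taylor expansion of $f$ at $0$ combined with Lemmas \ref{l:6} and \ref{l:8} on the near region, and the pointwise decay of $h$ from Lemma \ref{l:5} tested against $|f|_{L^1}$ on the far region. Note that $X = \min(1, x/\beta) \le 1$, so $[-X,X] \subset [-1,1]$, the interval on which $f^{(M-1)}$ is absolutely continuous and $f^{(M)}$ is locally integrable.

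On the near region, Taylor's formula at $0$ with integral remainder gives
$$
f(y) = \sum_{k=0}^{M-1} \frac{f^{(k)}(0)}{k!}\,y^k + R_M(y), \qquad |R_M(y)| \le \frac{|y|^{M-1}}{(M-1)!}\int_{-X}^X |f^{(M)}(s)|\,ds.
$$
The $k=0$ term integrates against $h(x,\cdot)$ to give $f(0)$ plus an error controlled by Lemma \ref{l:6}; each term with $1 \le k \le M-1$ is controlled by Lemma \ref{l:8}. In both situations the output has the shape $X^k(X x^{N'-1} + x^{N'}/X^{N'})$ for a free auxiliary parameter $N'$. Choosing $N' = N + M$ and splitting the two cases $X = 1$ (i.e., $x \ge \beta$) and $X = x/\beta$ (i.e., $x \le \beta$) shows that all these polynomial contributions are bounded by $(x^N + \beta^N)\|f\|_{M-1,0}$, which lives inside the second error term of \eqref{l9HB}. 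For the Taylor remainder, use only the coarse bound $|h(x,y)| \lappr 1/x$ from Corollary \ref{c:h}:
$$
\Bigl|\int_{-X}^X R_M(y)\, h(x,y)\,dy\Bigr| \lappr \frac{X^M}{x}\int_{-X}^X |f^{(M)}(s)|\,ds.
$$
The key observation is $X \le x/\beta$ in both branches, which forces $X^{M+1} \le x^{M+1}/\beta^{M+1}$ and hence $X^M/x \le x^M/(\beta^{M+1} X)$, exactly reproducing the first error term in \eqref{l9HB}.

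For the far region, Lemma \ref{l:5} with $m=n=0$ and parameter $N$ yields $|h(x,y)| \lappr_N x^{-1}(x^N + (x/|y|)^N)$ on $|y| > X$, and since $x/X = \max(x,\beta)$ by definition of $X$, we have $(x/X)^N \le x^N + \beta^N$. Hence $\sup_{|y|>X}|h(x,y)| \lappr (x^N + \beta^N)/x$; factoring this supremum out of $\int_{|y|>X} |f(y)||h(x,y)|\,dy$ and bounding the remaining integral by $|f|_{L^1}$ delivers the second error term.

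\textbf{Main obstacle.} The delicate point is the Taylor remainder estimate: the target first error keeps $f^{(M)}$ inside an $L^1$ average rather than a supremum, which forces the use of the coarse bound $|h| \lappr 1/x$ on $[-X,X]$ (the refined Lemma \ref{l:5} would cost sup norm on $f^{(M)}$). The precise weight $x^M/\beta^{M+1}$ then emerges from the uniform inequality $X \le x/\beta$ exploited in both branches. Everything else is routine bookkeeping: the choice $N' = N + M$ in Lemmas \ref{l:6} and \ref{l:8} and the verification of the case-by-case inequalities in the polynomial terms.
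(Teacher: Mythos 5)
Your proposal is correct and follows essentially the same route as the paper's proof: the same split at $|y|=X$, the tail handled by Lemma~\ref{l:5} together with $x/X=\max(x,\beta)$, the near region by Taylor expansion with Lemmas~\ref{l:6} and \ref{l:8} for the polynomial terms and the coarse bound $|h|\lappr 1/x$ plus $X\le x/\beta$ for the remainder. The only (immaterial) difference is your choice of auxiliary exponent $N'=N+M$ where the paper uses $N+1$.
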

    {\it Proof.} By
    Lemma~\ref{l:5} with $m=n=0$,  for any $N\geq 0$ we have
    $|h(x,y)| \lappr_N (x^N+\beta^N)x^{-1}$ if $|y|\ge X$. So  the tail-integral for $\int fh \,dy$ 
    may be     bounded as
    \begin{equation}\label{eq:l9-1}
    \left|\int_{|y|\ge X} f(y) h(x,y) \, dy\right|\lappr_N (x^N+\beta^N)x^{-1} \int_{|y|\ge X} |f(y)|\,dy\lappr_N
    (x^N+\beta^N)x^{-1} |f|_{L_1}\,.
    \end{equation}

    For the integral in $|y|<X$, instead we take the Taylor
    expansion of $f(y)$ around zero and get that 
    \begin{equation}\label{eq:l9-2}
    \begin{split}
        \int_{-X}^X f(y) h(x,y) \, dy  
        = &\sum_{j=0}^{M-1}
        \frac{f^{(j) }(0)}{j!}\int_{-X}^X y^j h(x,y)\, dy \\&+
        O_M\left(\frac{X^{M}}{x} \int_{-X}^{X}|f^{(M)}(y)|\,dy\right)\,,
        \end{split}
    \end{equation}
    by Corollary~\ref{c:h}. Next we use Lemma~\ref{l:6} with $N$ replaced by $N+1$     to
    get that 
    \begin{equation}\label{eq:l9-3}
        f(0) \int_{-X}^X  h(x,y)\, dy = f(0) + 
         O_{N,C}\left(\|f\|_{0,0}\left(Xx^{N} 
        +\frac{x^{N+1}}{X^{N+1}}\right)\right),
    \end{equation}
    while by Lemma~\ref{l:8}, for any $j>0$ we have
    \begin{equation}\label{eq:l9-4}
\left|      \frac{f^{(j) }(0)}{j!}\int_{-X}^X y^j h(x,y)\, dy\right|
   \lappr_{N,j,C}   \|f\|_{j,0}X^j\left(Xx^{N} 
   +\frac{x^{N+1}}{X^{N+1}}\right)\,.
    \end{equation}
    Putting together \eqref{eq:l9-1}--\eqref{eq:l9-4}, we obtain the desired estimate. 
    Indeed, since $X\leq x/\beta$, then the term $O_M$ in \eqref{eq:l9-2} is bounded by that in \eqref{l9HB}.
    Moreover, as
     $(x/X)^{N+1}=\max(x^{N+1},\beta^{N+1})\lappr_{C} Cx^N +\beta^N$, then
      the brackets in \eqref{eq:l9-3} and \eqref{eq:l9-4} are $\lappr_{C} x^N + \beta^N$, where we also used that $X\leq 1$. 
    \qed

Lemma~\ref{l:9} is needed for the proof of Theorem~\ref{th:3-4},  while for Theorem~\ref{th:3} we only need  its simplified version:

\begin{corollary}\label{c:9} Let an
  integrable function $f$ belong to the class  $\cC^{{M},0}(\R)$, ${M}\in \N$, and  $0<x\le C$ for some 
  $C>0$. Then, for any $0<\de <1$, 
  \begin{equation*}
\int_\R f(y) h(x,y) \, dy = f(0) + O_{{M},C,{\de}}\left(x^{{M}-{\de}}
\left(\|f\|_{{M},0}+|f|_{L_1}\right)\right)\,.
  \end{equation*}
\end{corollary}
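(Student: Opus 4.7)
The plan is to reduce the statement to Lemma \ref{l:9} by a careful choice of the auxiliary parameter $\beta$ appearing there. Since $f\in \cC^{M,0}(\R)$, the derivative $f^{(M-1)}$ is $C^1$ and in particular absolutely continuous on $[-1,1]$, while $f\in \cC^{M-1,0}(\R)\cap L^1(\R)$, so the hypotheses of Lemma \ref{l:9} are met. The crude uniform bound $|f^{(M)}(y)|\le \|f\|_{M,0}$ replaces $(1/X)\int_{-X}^X |f^{(M)}|\,dy$ by $2\|f\|_{M,0}$, turning the first remainder in \eqref{l9HB} into $O_M\bigl(x^M\beta^{-M-1}\|f\|_{M,0}\bigr)$. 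Combined with $\|f\|_{M-1,0}\le\|f\|_{M,0}$, Lemma \ref{l:9} then yields
\begin{equation*}
\int_\R f(y)h(x,y)\,dy = f(0)+O_M\!\left(\frac{x^M}{\beta^{M+1}}\|f\|_{M,0}\right)+O_{N,C}\!\left((x^N+\beta^N)\bigl(\|f\|_{M,0}+x^{-1}|f|_{L_1}\bigr)\right).
\end{equation*}

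Next I would optimise $\beta$. It suffices to treat $x\le 1$: if $x$ stays in $[c,C]$ with $c>0$, Corollary \ref{c:h} already gives $\bigl|\int fh\,dy\bigr|\lappr x^{-1}|f|_{L_1}\lappr |f|_{L_1}$, which implies the claim with constants depending on $c,C,M,\delta$. For $x\le 1$ I set $\beta:=x^{\delta/(M+1)}$, so that $\beta\in(0,1]$ and $\beta\ge x$. Then $x^M/\beta^{M+1}=x^{M-\delta}$ and $x^N+\beta^N\lappr \beta^N=x^{N\delta/(M+1)}$. The first remainder is thus precisely $O_{M,\delta}(x^{M-\delta}\|f\|_{M,0})$, while choosing $N$ so large that $N\delta/(M+1)\ge M-\delta+1$ — concretely $N:=\lceil (M+1)(M-\delta+1)/\delta\rceil$ — makes
\begin{equation*}
(x^N+\beta^N)\bigl(\|f\|_{M,0}+x^{-1}|f|_{L_1}\bigr)\lappr_{M,\delta} x^{M-\delta}\bigl(\|f\|_{M,0}+|f|_{L_1}\bigr),
\end{equation*}
which is of the claimed form.

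The only non-routine point is the calibration of $\beta$: one needs $\beta^{M+1}$ to stay above $x^\delta$, so that the Taylor-type remainder is $\lappr x^{M-\delta}$, yet $\beta^N$ to stay below $x^{M-\delta+1}$, so that the tail contribution outside $[-X,X]$ absorbs the $x^{-1}|f|_{L_1}$ factor. The fractional-power choice $\beta=x^{\delta/(M+1)}$ satisfies both constraints simultaneously, which explains why the arbitrarily small loss $\delta>0$ must appear in the exponent; taking $\delta=0$ would force $\beta$ bounded away from $0$ and the tail term would cease to decay. Beyond this balancing, the argument is a direct specialisation of Lemma \ref{l:9}, with no further number-theoretic or analytic input.
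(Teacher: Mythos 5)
Your proof is correct and follows essentially the same route as the paper: both reduce to Lemma~\ref{l:9}, bound $\tfrac1X\int_{-X}^X|f^{(M)}|\le 2\|f\|_{M,0}$, choose $\beta=x^{\delta/(M+1)}$ for $x\le1$, and pick $N$ large enough (of order $(M+1)(M-\delta+1)/\delta$) so that the tail term absorbs the $x^{-1}$ factor. The only cosmetic difference is the regime $1\le x\le C$, where the paper applies Lemma~\ref{l:9} with $\beta=1$, $N=0$ and uses $x^M\le C^{\delta}x^{M-\delta}$, while you invoke the crude Corollary~\ref{c:h} bound $|\int fh|\lappr x^{-1}|f|_{L_1}$ together (implicitly) with $|f(0)|\le\|f\|_{M,0}$; both give the same conclusion, so this is a minor variation rather than a different method.
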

{\it Proof.} 
The assertion follows from Lemma~\ref{l:9}   by choosing 
	for any $0<\delta<1$, 
	$\beta=x^{\de/(M+1)}$ if $x\le 1$ and $\beta=1$ if $x>1$.  Indeed, then for $ 0<x\le1$ we have that 
	$
	x^M \beta^{-(M+1)} = x^{M-\delta},
	$
	and that 
	$$
	(x^N+\beta^N) x^{-1} \le 2\beta^N x^{-1}   \le 2 x^{M-\delta} 
	\qquad\text{if} \;\; N\ge N_\delta =(M-\delta +1)(M+1) /\delta.
	$$
	While if $1\le x\le C$, then $x^M \le C^\delta x^{M-\delta}$, and choosing 
	  $N=0$ we get that $(x^N+1) =2 \le 2x^{M-\delta}$. The obtained relations imply the assertion. 
	   \qed

    \subsection{Approximation for $I_q(0)$}
   In what follows it is convenient to write the integrals $I_q(\vc;
   A, L^2m)$ as  
    \be\lbl{Iq-t_Iq}
    I_q(\vc)=L^{d} \tilde I_q(\vc),
    \ee
    where
    \be\lbl{tilde_I_q}
    \tilde I_q (\vc) = \tilde I_q (\vc; A,m,L) =
     \int_{\R^{d}} w(\z)\,
    h\left(\frac qL, { 
    	F^m}(\z)\right) e_q(-\z\cdot \vc L)
    \,d\z\,.
    \ee    
 The proposition below replaces Lemmas~11, 13 and
Theorem~3 of \cite{HB}. In difference with those results we  do not
assume  that  $0\notin\supp w$.
Since for $\vc = 0$ the exponent $e_q$ in the definition of the integral $I_q(\vc)$ equals one,  we can consider $I_q(0)$ as a function of a {\it real} argument $q\in\R$, and we do so in the proposition below; we will use this in Appendix~\ref{s:appendix}.

    \begin{proposition}\lbl{l:I_q(0)=} 
     Let  $q\in\R$, $q\le C L$ with some $C> 0$. 
   
   a)  If $d\ge5$ and    ${\N}\ni {M}< d/2-1$,  then    for any
       $\de>0$,
    \be\lbl{I_q(0)=}
    \begin{split}
      I_q(0;A, m, L) &= L^{d} \sigma_\infty(w;A,m)  \\    
      &+
      O_{m, {M},C,{\de}}\left(q^{{M}-\delta} L^{d-{M}+{\de}}
     { \| w\|_{{M},d+1}}\right) .
     \end{split}
      \ee
      
      b)  
       If $d = 4$, $\N \ni {M}\le d/2-1$
        and $m=0$, then for any $0<\beta\le 1$ and $N\ge 0$,
      \begin{equation}\lbl{I_q(0)=-4}
        \begin{split}
      I_q(0; A,0,L) =& L^{d} \sigma_\infty(w; A,0)
     +  O\left(\beta^{-{M}-1}q^{M}L^{d-{M}}\Big\lan\log\big(\frac{q}{L\beta}\big)\Big\ran { \|
     w\|_{{M},d+1}}\right)\\
      &+ O_{C,N}\left((q^NL^{d-N}+\beta^N)({ \|
      w\|_{{M}-1,d+1}} +Lq^{-1}\|
      w\|_{0,d+1})\right) .
      \end{split}
      \end{equation}     
    \end{proposition}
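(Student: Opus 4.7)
The plan is to slice $\tilde I_q(0)$ by the level sets of $F^m$. Applying the coarea formula to the change of variables $s = F^m(\z)$ (i.e.\ to the decomposition $d\z = |\nabla F(\z)|^{-1}\, dz|_{\Sigma_{m+s}}\, ds$), I convert the $d$-dimensional integral into a one-dimensional integral against $h(q/L,\cdot)$:
\begin{equation*}
\tilde I_q(0) = \int_{\R^d} w(\z)\, h\!\left(\tfrac qL, F^m(\z)\right) d\z = \int_\R h\!\left(\tfrac qL, t\right) \psi(t)\, dt,
\end{equation*}
where $\psi(t) := \sigma_\infty(w; A, m+t)$; note that $\psi(0) = \sigma_\infty(w; A, m)$. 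The problem then reduces to approximating the $1$-dimensional integral $\int h(q/L, t)\psi(t)\, dt$ by $\psi(0)$, for which Corollary~\ref{c:9} (in case (a)) and Lemma~\ref{l:9} (in case (b)) are exactly tailored.

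The crucial input will be the regularity and decay of $\psi$, to be established in Section~\ref{sec:7}. I would assume the following three facts proved there: (i) for every integer $M \le \lceil d/2 \rceil - 2$, $\psi \in \cC^{M,0}(\R)$ with $\|\psi\|_{M,0} \lappr_m \|w\|_{M, d+1}$; (ii) $\psi \in L^1(\R)$ with $|\psi|_{L^1} \lappr_m \|w\|_{0,d+1}$; and (iii) when $d$ is even the top derivative $\psi^{(d/2-1)}$ has at most a logarithmic singularity at zero, so that for any $0 < X \le 1$,
\begin{equation*}
\int_{-X}^X |\psi^{(d/2-1)}(t)|\, dt \lappr_m X \langle \log X\rangle\, \|w\|_{d/2-1, d+1}.
\end{equation*}
The weight $\langle \z\rangle^{d+1}$ in the hypothesis norms is precisely what is needed for the unbounded slices $\Sigma_{m+t}$ to yield an integrable integrand and for $\psi$ itself to lie in $L^1$.

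With these facts in hand, part (a) follows by applying Corollary~\ref{c:9} to $f = \psi$, $x = q/L \le C$ and the prescribed $M < d/2 - 1$: indeed $\psi \in \cC^{M,0}$ by (i) and $\psi \in L^1$ by (ii), and multiplying the resulting estimate by $L^d$ produces \eqref{I_q(0)=}. Part (b) uses Lemma~\ref{l:9} instead, with $X = \min(1, q/(L\beta))$; the first error term of Lemma~\ref{l:9} involves $X^{-1}\int_{-X}^X |\psi^{(M)}|\, dt$, which by (iii) is controlled by $\langle \log(q/(L\beta))\rangle\, \|w\|_{M,d+1}$ when $M = d/2 - 1 = 1$ (and yields no log when $M = 0$, being bounded by $\|\psi\|_{0,0}$). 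The second error term of Lemma~\ref{l:9}, via (i) and (ii), reproduces the second $O_{C,N}$-term in \eqref{I_q(0)=-4} after multiplication by $L^d$.

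The main obstacle is not the $1$-dimensional quadrature argument---which is essentially routine once Lemmas~\ref{l:4}--\ref{l:9} are in hand---but property (iii) above: because we do not assume that $w$ vanishes near the singular point $0 \in \Sigma_0$, the function $t \mapsto \sigma_\infty(w; A, m+t)$ is no longer smooth at $t = 0$ when $d$ is even, and the logarithmic loss in its $(d/2-1)$-th derivative is precisely what produces the $\log L$ term in Theorem~\ref{th:3-4}. Carrying out this singular analysis in $d$ dimensions, using the methods of \cite{DK1}, is deferred to Section~\ref{sec:7}; the present proposition is essentially the bridge between that analysis and the one-dimensional lemmas of Section~\ref{sec:3}.
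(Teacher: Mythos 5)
Your proof follows exactly the same route as the paper's: reduce $\tilde I_q(0)$ to the one-dimensional integral $\int_\R \II(m+t)\,h(q/L,t)\,dt$ by the coarea formula, then apply Corollary~\ref{c:9} for part (a) and Lemma~\ref{l:9} for part (b), with the smoothness/decay of $t\mapsto\II(m+t)$ supplied by Theorem~\ref{c_integrals}. Your three assumed facts (i), (ii), (iii) are precisely what Theorem~\ref{c_integrals} delivers (the $L^1$ bound via the decay exponent $\kappa'$ and the logarithmic bound on the $(d/2-1)$-st derivative), so the proposal is correct and essentially identical to the paper's argument; the only harmless inaccuracy is your parenthetical mention of the case $M=0$ in part (b), which the statement excludes since $M\in\N$ there forces $M=1$ when $d=4$.
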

{\it Proof.} For $d\ge{4}$, applying the  co-area formula 
(see [3], Theorem 6.3) we re-write the integral in (3.9) with $c=0$
in terms of integrals over hyper-surfaces $\Sigma_t$ as follows:
\be\label{inte}
\tilde I_q(0)=\int_\R \II({ m+}t) h(q/L,t)\, dt\,, \qquad \II(t) = \int_{\Sigma_t} w(\z)\,\mu^{\Sigma_t}(d\z)\,,
\ee 
where the  measure $\mu^{\Sigma_t}$ is the same as in \eqref{lead_term}. 
By  Theorem~\ref{c_integrals},
\be\label{yes}
\| \II\|_{k, \tilde K}  \lappr_{k, K, \tilde K}  \| w\|_{k,K} \;\; \;\text{if \ $\tilde K < \frac{K+2-d}2$, \quad { $K>d$}, }
\ee
and $ k< d/2-1$.
Denote $f^m(y) = \II(m+y)$. Then
$
\| f^m\|_{k,\tilde K} \lappr_{m, \tilde K} \| \II \|_{k,\tilde K}, 
$
and by \eqref{yes} 
\be\label{yyes}
| f^m|_{L_1} = | \II |_{L_1}  \lappr  \| \II \|_{0, 4/3}  \lappr  \| w\|_{0, d+1}.
\ee
To prove a) we apply  Corollary~\ref{c:9} with  $f=f^m$ and 
$x=q/L$   to the first integral in \eqref{inte}. 
Note that 
	$
	f^m(0) = \II(m) = \sigma_\infty(w; A,m).
	$
Then, using \eqref{yes} with $\tilde K=0, \, { K=d+1}$ and $k= M$
jointly with \eqref{yyes} 
 we get that 
$$
\tilde I_q(0) = \sigma_\infty(w) +O_{M, m,C, \delta} \big( q^{M-\delta} L^{-M+\delta} {  \| w\|_{M, d+1}}\big). 
$$
So \eqref{I_q(0)=} follows. 
\medskip

To establish  \eqref{I_q(0)=-4}, we apply Lemma~\ref{l:9} to write the integral in \eqref{inte} with $m=0$ as 
\begin{equation*}
  \begin{split}
\int_\R \II(t) h(x,t)\,dt &= \II(0) +
      O_{M}\left(\beta^{-M-1}x^{M} 
  \left(\frac1X\int_{-X}^{X}|\II^{(M)} (t)|\,dt\right)\right)\\
  &+O_{C,N}\left((x^N+\beta^N)(\|\II\|_{M-1,0}
+x^{-1} |\II |_{L_1})\right),
  \end{split}
\end{equation*}
where $x=q/L$ and $ X=\min\{1,x/\beta\}.$
By applying  Theorem~\ref{c_integrals}, with $k=M$
  and $M=d+1$, we get
$$
\int_{-X}^{X}|\II^{(M)}(t)|\,dt \, \lappr
X\lan \log X\ran {  \|w\|_{M,d+1}}\,.
$$
Using this estimate jointly with \eqref{yes} and \eqref{yyes} we arrive at \eqref{I_q(0)=-4}.

\section{The $J_0$ term }\label{sec:4}

In this section we prove the following proposition concerning  the
term $J_0$ defined in \eqref{eq:J><}:

\begin{proposition}\lbl{l:n(0)}
  { Let $d\ge5$}. Then for any $0<\gamma_2<1$,
	$$
	\big|J_0-L^{d}\sigma_\infty(w){  \sigma(A,L^2m)}\big|
        \lappr_{\gamma_2{ ,m}}
	 L^{\frac{d}2+2 +\gamma_2(\frac{d}2-1)}
        {  \|w\|_{\lceil d/2\rceil-2,d+1}}.
	$$
\end{proposition}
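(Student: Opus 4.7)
Following the scheme of Section~\ref{sec:2}, the plan is to split $J_0 = J_0^+ + J_0^-$ as in \eqref{eq:n(0)=} with the cutoff $\rho = L^{-\gamma_2}$, to bound $J_0^+$ via pointwise estimates on $S_q(0)$ and $h$, and to analyse $J_0^-$ by applying Proposition~\ref{l:I_q(0)=}(a) termwise before completing the truncated singular series through Lemma~\ref{l:31HB}.

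\textbf{Bound on $J_0^+$.} I would combine $|S_q(0)| \lappr q^{d/2+1}$ from Lemma~\ref{l:25HB} with the crude pointwise bound $|h(q/L,y)| \lappr L/q$ from Corollary~\ref{c:h}. Substituting $\y = \z/L$ in \eqref{eq:I'} this yields $|I_q(0)| \lappr L^{d+1} q^{-1} |w|_{L_1} \lappr L^{d+1} q^{-1} \|w\|_{0,d+1}$, since $\langle \z\rangle^{-d-1}$ is integrable. Summing over $q > \rho L$ and using $d \geq 5$ (so that $\sum_{q > X} q^{-d/2} \lappr X^{1-d/2}$) gives
$|J_0^+| \lappr L^{d+1}(\rho L)^{1-d/2}\|w\|_{0,d+1} = L^{d/2+2+\gamma_2(d/2-1)}\|w\|_{0,d+1}$,
which will be the dominant error in the final bound.

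\textbf{Asymptotics of $J_0^-$.} Here I would take $M := \lceil d/2\rceil - 2$, the maximal integer with $M < d/2-1$, and fix a small $\delta > 0$. Proposition~\ref{l:I_q(0)=}(a) then gives, for $q \leq \rho L \leq L$,
\[
I_q(0) = L^d\sigma_\infty(w;A,m) + O_{m,\delta}\!\left(q^{M-\delta}L^{d-M+\delta}\|w\|_{M,d+1}\right).
\]
Multiplying by $q^{-d}S_q(0)$ and summing, the leading piece is $L^d\sigma_\infty(w)\sum_{q \leq \rho L} q^{-d}S_q(0) = L^d\sigma_\infty(w)\,\sigma(A,L^2m) + O((\rho L)^{2-d/2}L^d\|w\|_{0,d+1})$ by Lemma~\ref{l:31HB}, together with the bound $|\sigma_\infty(w)| \lappr_m \|w\|_{0,d+1}$ obtained from Theorem~\ref{c_integrals} (already invoked in the proof of Proposition~\ref{l:I_q(0)=}). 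The truncation error here is $L^{d/2+2+\gamma_2(d/2-2)}\|w\|_{0,d+1}$, strictly smaller than the $J_0^+$ contribution. For the pointwise remainder, the combined weight $q^{-d/2+1+M-\delta}$ yields, after summing over $q \leq \rho L$, either a bounded sum (even $d$, exponent $<-1$) producing $L^{d/2+2+\delta}\|w\|_{M,d+1}$, or a sum of size $(\rho L)^{1/2-\delta}$ (odd $d$) producing $L^{d/2+2-\gamma_2(1/2-\delta)}\|w\|_{M,d+1}$. Choosing $\delta$ small, say $\delta = \gamma_2(d/2-1)/2$, absorbs both into $L^{d/2+2+\gamma_2(d/2-1)}\|w\|_{M,d+1}$.

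\textbf{Assembly and main obstacle.} Adding the three contributions and using $\|w\|_{0,d+1}, \|w\|_{M,d+1} \leq \|w\|_{\lceil d/2\rceil - 2, d+1}$ delivers the claimed inequality. The substantive input is already packaged inside Proposition~\ref{l:I_q(0)=}, which itself rests on the smoothness and decay of $t \mapsto \sigma_\infty(w;A,t)$ established in Section~\ref{sec:7}; the work left for this proposition is essentially bookkeeping. I expect the only subtlety to be tracking how the three competing errors (the $J_0^+$ tail, the singular-series truncation of Lemma~\ref{l:31HB}, and the pointwise remainder from Proposition~\ref{l:I_q(0)=}) scale in $\gamma_2$, and verifying that a single Sobolev norm of order $\lceil d/2\rceil - 2$ suffices to control all of them.
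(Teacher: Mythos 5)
Your proposal is correct and follows essentially the same route as the paper: the same splitting $J_0=J_0^++J_0^-$ at $\rho=L^{-\gamma_2}$, the same crude bound on $J_0^+$ via Lemma~\ref{l:25HB} and Corollary~\ref{c:h}, and the same treatment of $J_0^-$ by inserting Proposition~\ref{l:I_q(0)=}(a) with $M=\lceil d/2\rceil-2$ and completing the singular series with Lemma~\ref{l:31HB}. Your bookkeeping of the three error terms (including the even/odd $d$ distinction in the remainder sum) matches the paper's Lemmas~\ref{l:I_A} and \ref{l:I_B}, up to an inessential difference in the choice of $\delta$.
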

{\it Proof.}
To establish the result we write $J_0$ in the form
\eqref{eq:n(0)=}. 
Then the assertion   follows from Lemmas~\ref{l:I_A} and
\ref{l:I_B} below which estimate the terms $J_0^+$ and $J_0^-$, noting  that $|w|_{L_1}\lappr
\|w\|_{0,d+1}$.
\qed

\begin{lemma}\lbl{l:I_A}
	Assume that $w\in L_1(\R^{d})$ and $d\ge3$. Then we have the
        bound $|J_0^+|\lappr
        L^{d/2+2 +\gamma_2(d/2 -1)}|w|_{L_1}$, {  for any $\gamma_2\in(0,1)$.}
\end{lemma}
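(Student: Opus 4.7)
\textbf{Proof plan for Lemma~\ref{l:I_A}.} The strategy is to combine the pointwise bound $|S_q(0)| \lappr q^{d/2+1}$ from Lemma~\ref{l:25HB} with a crude $L^1$-based bound on $I_q(0)$, and then sum a tail in $q$. No cancellation or smoothness of $w$ is exploited; this is why only $|w|_{L_1}$ appears on the right.

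For the integral, I would first invoke Corollary~\ref{c:h}, which gives $|h(x,y)| \lappr 1/x$ uniformly in $y$. Applied to the definition \eqref{eq:I'} of $I_q(0)$, this yields
\begin{equation*}
|I_q(0)| \;\le\; \frac{c L}{q} \int_{\R^d} \bigl|w(\z/L)\bigr|\,d\z \;=\; \frac{c L^{d+1}}{q}\,|w|_{L_1}\,,
\end{equation*}
after the change of variables $\z = L\z'$. Combining with Lemma~\ref{l:25HB} gives the termwise bound
\begin{equation*}
q^{-d}\,|S_q(0)|\,|I_q(0)| \;\lappr\; q^{-d}\cdot q^{d/2+1}\cdot \frac{L^{d+1}}{q}\,|w|_{L_1} \;=\; L^{d+1}\,q^{-d/2}\,|w|_{L_1}\,.
\end{equation*}

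Summing over $q > \rho L = L^{1-\gamma_2}$ and using that $d \ge 3$ makes the tail $\sum_{q > Y} q^{-d/2}$ converge and be $\lappr Y^{1-d/2}$, one obtains
\begin{equation*}
|J_0^+| \;\lappr\; L^{d+1}\,|w|_{L_1}\cdot \bigl(L^{1-\gamma_2}\bigr)^{1-d/2}\,.
\end{equation*}
A direct computation of the exponent gives
\begin{equation*}
d+1 + (1-\gamma_2)(1-d/2) \;=\; \tfrac{d}{2} + 2 + \gamma_2\bigl(\tfrac{d}{2}-1\bigr)\,,
\end{equation*}
which is the claimed bound. The only mild point to check is that the tail sum in $q$ is genuinely summable, i.e. that $d/2 > 1$; this is where the hypothesis $d \ge 3$ enters. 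Everything else is bookkeeping, and no $m$-dependence arises since all of the pointwise estimates above are uniform in the shift parameter.
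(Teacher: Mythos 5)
Your proposal is correct and follows exactly the paper's own argument: Lemma~\ref{l:25HB} for $|S_q(0)|\lappr q^{d/2+1}$, Corollary~\ref{c:h} for $|I_q(0)|\lappr L^{d+1}q^{-1}|w|_{L_1}$, and the tail sum $\sum_{q>L^{1-\gamma_2}}q^{-d/2}\lappr L^{(1-\gamma_2)(1-d/2)}$, with the same exponent bookkeeping. Nothing to add.
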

{\it Proof.}
Since according to Lemma~\ref{l:25HB} $|S_q(0)|\lappr q^{d/2+1}$, then 
$$
|J_0^+|\lappr\sum_{q>L^{1-\gamma_2}} q^{-d/2+1}I_q(0).
$$
 Writing  integral $I_q$ as in  \eqref{Iq-t_Iq}, by Corollary~\ref{c:h} we get
$
\ds{|I_q(0)|\lappr \fr{L^{d+1}}{q}  |w|_{L_1}.} 
$
Therefore,
\begin{equation*}
  \begin{split}
|J_0^+|&\lappr L^{d+1}|w|_{L_1} \sum_{q>L^{1-{\gamma_2}}} q^{-d/2 }
\lappr  L^{d+1}|w|_{L_1}L^{(-d/2+1)(1-\gamma_2)}\\
&=
L^{d/2+2 +\gamma_2(d/2-1)}|w|_{L_1}.
  \end{split}
\end{equation*}
\qed

\smallskip

\begin{lemma}\lbl{l:I_B} 
	{  Let $d\ge5$. Then for any $\gamma_2\in(0,1)$},
	$$
	J_0^-=L^{d}\sigma_\infty(w){  \sigma(A,L^2m)} + O_{\gamma_2{ ,m}}\big(  L^{d/2+2 +\gamma_2{ (d/2-2)}} {  \|w\|_{\lceil d/2\rceil-2,d+1}} \big).
	$$
\end{lemma}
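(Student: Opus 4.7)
The plan is to insert into the definition $J_0^- = \sum_{q\le \rho L} q^{-d}S_q(0) I_q(0)$ the asymptotic for $I_q(0)$ supplied by Proposition~\ref{l:I_q(0)=}(a), and then to split the result into a ``main'' piece, to be handled by Lemma~\ref{l:31HB}, and a ``remainder'' piece, to be bounded by the Weil-type inequality $|S_q(0)|\lappr q^{d/2+1}$ of Lemma~\ref{l:25HB}. I would take $M := \lceil d/2\rceil - 2$, the largest integer satisfying the hypothesis $M<d/2-1$ of Proposition~\ref{l:I_q(0)=}(a); this also matches the smoothness index appearing in the statement.

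Accordingly, I would write $I_q(0) = L^d\sigma_\infty(w) + R_q$ with $|R_q|\lappr_{m,\delta} q^{M-\delta}L^{d-M+\delta}\|w\|_{M,d+1}$ for an arbitrary $\delta\in(0,1)$ (the constant $C$ in Proposition~\ref{l:I_q(0)=}(a) is harmless since $q\le \rho L \le L$), splitting $J_0^-$ into $L^d\sigma_\infty(w)\sum_{q\le \rho L} q^{-d}S_q(0)$ plus $\sum_{q\le \rho L} q^{-d}S_q(0)R_q$. For the first, Lemma~\ref{l:31HB} with $X=\rho L = L^{1-\gamma_2}$ gives $\sigma(A,L^2m) + O(L^{(1-\gamma_2)(-d/2+2)})$; combined with $|\sigma_\infty(w)| = |\II(m)|\lappr_m \|w\|_{0,d+1}\le \|w\|_{M,d+1}$ (from Theorem~\ref{c_integrals} applied as in the proof of Proposition~\ref{l:I_q(0)=} with $k=0$, $K=d+1$, $\tilde K=0$), this contributes exactly $L^d\sigma_\infty(w)\sigma(A,L^2m)$ plus an error of the target size $L^{d/2+2+\gamma_2(d/2-2)}\|w\|_{M,d+1}$.

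For the second piece Lemma~\ref{l:25HB} will yield
$$
\Big|\sum_{q\le \rho L} q^{-d}S_q(0)R_q\Big| \lappr L^{d-M+\delta}\|w\|_{M,d+1}\sum_{q\le \rho L} q^{M+1-d/2-\delta}.
$$
The exponent $M+1-d/2-\delta$ equals $-1-\delta$ for even $d$ (as $M=d/2-2$), so the $q$-sum is $O(1)$ and the piece is $O(L^{d/2+2+\delta})$; it equals $-1/2-\delta$ for odd $d$ (as $M=(d-3)/2$), so the $q$-sum is $O((\rho L)^{1/2-\delta})$ and the piece is $O(L^{d/2+2-\gamma_2/2+\gamma_2\delta})$. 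Since $d/2-2\ge 1/2$ for every $d\ge 5$, picking $\delta$ small enough in terms of $\gamma_2$ absorbs both cases into the target $L^{d/2+2+\gamma_2(d/2-2)}\|w\|_{M,d+1}$. The mildly delicate step, and really the only place where the two parts of the argument have to be balanced against each other, will be the odd-dimensional case: there the remainder sum is a truncated, slowly-diverging series in $q$, and one has to verify that the resulting $(\rho L)^{1/2-\delta}$ factor is still covered by the slack $\gamma_2(d/2-2)$ in the target exponent --- which is exactly what the optimal choice $M=\lceil d/2\rceil-2$ secures.
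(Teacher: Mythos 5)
Your proof is correct and takes essentially the same route as the paper: insert Proposition~\ref{l:I_q(0)=}(a) with $M=\lceil d/2\rceil-2$, handle the main piece via Lemma~\ref{l:31HB} at $X=\rho L$, and bound the remainder piece with the Weil-type estimate of Lemma~\ref{l:25HB}. The only cosmetic difference is that you retain the factor $q^{-\delta}$ in the remainder sum (so the even-$d$ sum converges outright), whereas the paper drops it and absorbs the resulting $\log L$ into $L^{\delta}$ with the explicit choice $\delta=\gamma_2/2$.
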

{\it Proof.}
Inserting \eqref{I_q(0)=}  with $C=1$  into the definition of the term $J_0^-$, we get 
$
J_0^-=I_A+I_B,
$
where
\begin{align}\non
&I_A :=L^{d}\sigma_\infty(w)\sum_{q\leq L^{1-\gamma_2}} q^{-d}S_q(0), 
\\\non 
&|I_B|\lappr_{M,\delta{ ,m}}L^{d-M+\delta}{ \|w\|_{M,d+1}}
\sum_{q\leq 
  L^{1-{\gamma_2}}}S_q(0) q^{-d+M} \,,
\end{align}
for $M{ < d/2-1}$ and any $\delta>0$. \ 
Lemma~\ref{l:31HB} implies that $$\sum_{q\leq L^{1-\gamma_2}} q^{-d}S_q(0)={  \sigma(A,L^2m)} + O(L^{(-d/2+2)(1-\gamma_2)}),$$ so
$$
I_A=L^{d}\sigma_\infty(w){  \sigma(A,L^2m)} + O(\sigma_\infty(w)
L^{d/2+2 +\gamma_2(d/2-2)})\,,
$$
whereas $|\sigma_\infty(w)|=|\II({ m})|\le{ \|\II\|_{0,0}\le
\|w\|_{0,d+1}}$  on account of ~\eqref{yes}.  As for the term
$I_B$, Lemma~\ref{l:25HB} implies that
$$
|I_B|\lappr_{M,\delta{ ,m}}
L^{d-M+\delta}{ \|w\|_{M,d+1}}\sum_{q\leq 
  L^{1-\gamma_2}}q^{-d/2+1+M}\,. 
$$
Choosing $M={ \lceil d/2\rceil} -2$ and $\delta =\gamma_2/2$, we get 
$$
|I_B|\lappr_{\delta{ ,m}} {  \|w\|_{\lceil d/2\rceil-2,d+1}}
L^{d/2+2+\delta}\ln L
\lappr_{\gamma_2{ ,m}}
{  \|w\|_{\lceil d/2\rceil-2,d+1}} \, L^{d/2+2 +\gamma_2}\,.
$$
\qed

\section{The  $J_>^{\gamma_1}$ term}\label{sec:5}

We provide here an estimate of the term $J_>^{\gamma_1}$ defined in
\eqref{eq:J><}.  The key point of the proof 
 is an adaptation of Lemma~19 of
\cite{HB} to our case. We recall the notation~\eqref{Iq-t_Iq}. 

\begin{proposition}\label{l:19}
   { For any $d\geq 1$}, $ N>0$ and { $\vc\ne 0$},
\be\lbl{eq:19}
|\tilde I_q(\vc)|\lappr_{N{ ,m}} \frac Lq |\vc|^{-N}\left\|w\right\|_{N,2N+d+1}
  \ee
\end{proposition}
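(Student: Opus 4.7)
\textit{Proof plan.} The factor $e_q(-\z\cdot\vc L) = \exp(-2\pi i \z\cdot\vc L/q)$ oscillates rapidly as $|\vc|$ grows, so the natural strategy is integration by parts. Set
$$
\phi(\z):= w(\z)\,h\!\left(\frac{q}{L},F^m(\z)\right),
$$
and introduce the first order differential operator
$$
\cL := \frac{q}{-2\pi i L|\vc|^2}\,\vc\cdot\nabla_{\z},
$$
for which $\cL e_q(-\z\cdot\vc L)=e_q(-\z\cdot\vc L)$. Since $\phi$ decays at infinity (see below), the boundary terms vanish and repeated integration by parts gives
$$
\tilde I_q(\vc) = \int_{\R^d}(\cL^{\!*})^N\phi(\z)\,e_q(-\z\cdot\vc L)\,d\z,
$$
with $\cL^{\!*}=(q/(2\pi i L|\vc|^2))\,\vc\cdot\nabla_{\z}$. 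Taking absolute values,
\begin{equation}\label{pl:step1}
|\tilde I_q(\vc)|\;\lappr_N\; \Bigl(\frac{q}{L|\vc|}\Bigr)^{\!N}\int_{\R^d}\sum_{|\alpha|=N}|\p^\alpha \phi(\z)|\,d\z,
\end{equation}
so it remains to show that the integral on the right is $\lappr_{N,m}(L/q)^{N+1}\|w\|_{N,2N+d+1}$.

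For this I would expand $\p^\alpha\phi$ by Leibniz and then apply Fa\`a di Bruno to the composition $h(q/L, F^m(\z))$. Because $F^m$ is quadratic, only first and second derivatives of $F^m$ appear, and an $n$-th order $\z$-derivative of $h(q/L,F^m(\z))$ is a sum of terms of the shape
$$
\p_y^k h\!\left(\tfrac{q}{L},F^m(\z)\right)\cdot \prod_i \p^{s_i}F^m(\z),\qquad \sum s_i=n,\ s_i\in\{1,2\},
$$
with $n/2\le k\le n$; this product is bounded by $|\z|^{2k-n}$ since first derivatives of $F^m$ are $O(|\z|)$ and second derivatives are constants. Combined with the bound $|\p_y^k h(q/L,\cdot)|\lappr (L/q)^{k+1}$ from Lemma~\ref{l:5} (with $m=0$ there), Leibniz yields
$$
|\p^\alpha\phi(\z)|\;\lappr_N\; \|w\|_{N,2N+d+1}\,\lan\z\ran^{-2N-d-1}\sum_{j=0}^{N}\sum_{k=\lceil j/2\rceil}^{j}\Bigl(\frac{L}{q}\Bigr)^{k+1}|\z|^{2k-j},
$$
for any $|\alpha|\le N$.

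Integrating in $\z$, each term $\int \lan\z\ran^{-2N-d-1}|\z|^{2k-j}d\z$ is finite because $2k-j\le 2N$, which is precisely what forces the weight exponent $2N+d+1$ and explains its appearance in the hypothesis. The dominant contribution to the sum in $j,k$ is $j=k=N$, giving $(L/q)^{N+1}$. Substituting into \eqref{pl:step1} collapses $(q/L)^N\cdot(L/q)^{N+1}$ to $L/q$ and produces the required factor $|\vc|^{-N}\|w\|_{N,2N+d+1}$.

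The main obstacle is purely bookkeeping: keeping track of how the chain rule redistributes the $N$ derivatives between $w$ and $F^m$ and verifying that the worst case (all $N$ derivatives hitting $h\!\circ\! F^m$ with $k=N$ first-order factors of $F^m$) is the one whose decay requirement dictates the weight $2N+d+1$. There is no analytic subtlety beyond this; the dependence on $m$ enters only through $F^m$ whose shift by $m$ does not affect derivatives, so the constants depend on $m$ only via their domain of validity in Lemma~\ref{l:5}.
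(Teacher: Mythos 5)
Your integration by parts and the Leibniz/Fa\`a di Bruno bookkeeping coincide with the paper's argument, and your estimate is correct in the regime $q\le L$. But there is a genuine gap for $q>L$, and the proposition is claimed (and needed) for all $q$. Your final step asserts that the dominant term of $\sum_{j,k}(L/q)^{k+1}|\z|^{2k-j}$ is $j=k=N$, giving $(L/q)^{N+1}$. That is only true when $L/q\ge 1$. When $q>L$ the factor $(L/q)^{k+1}$ is \emph{maximized} at $k=0$ (all $N$ derivatives falling on $w$, none on $h\circ F^m$), so your bound only yields $(q/L)^N|\vc|^{-N}\cdot(L/q)=(q/L)^{N-1}|\vc|^{-N}$, which diverges as $q\to\infty$ instead of decaying like $L/q$. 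Nothing in the uniform derivative bound $|\p_y^k h(x,y)|\lappr x^{-k-1}$ can rescue this, because for large $x$ that bound gains nothing.

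The missing ingredient is the support property of $h$: by item 1 of Lemma~\ref{l:4}, for $x=q/L\ge1$ the function $h(x,F^m(\z))$ vanishes unless $2|F^m(\z)|>q/L$, hence $\lan\z\ran^2\gtrsim_m q/L$ on the support of the integrand. One then uses item 3 of Lemma~\ref{l:4} to bound $|\p_y^{n-l}h|\lappr (L/q)^{n-l+1}$ on that set and, crucially, converts the \emph{surplus} decay of $w$ --- the extra $\lan\z\ran^{-2(N-n+l)}$ available from the weight $2N+d+1$ --- into additional powers of $L/q$ via $\lan\z\ran^{-2}\lappr_m L/q$, restoring the factor $(L/q)^{N+1}$ uniformly. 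This also corrects your accounting of where the exponent $2N+d+1$ comes from: for $q\le L$ a weight $N+d+1$ already makes every term $\int\lan\z\ran^{-N-d-1}|\z|^{2k-j}d\z$ converge (since $2k-j\le N$); the full $2N+d+1$ is required precisely to handle $q>L$.
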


{\it Proof.}
Let $f_q(\z):= w\left(\z\right)
  h\left(\frac qL, 
 {  F^m}(\z)\right)$. Since
  $$
  \frac{i}{2\pi}\frac{q}{L} |\vc|^{-2}\left( \vc\cdot \nabla_{\z}\right)
  e_q(-\z\cdot \vc L)  =  e_q(-\z\cdot \vc L)\,,
  $$
then integrating by parts $N$ times the integral  \eqref{tilde_I_q} we get that

\begin{equation*}
  \begin{split}
    \left| \tilde I_q(\vc)\right| &\le \left(\frac{q}{2\pi L} |\vc|^{-2}\right)^N \int_{\R^{d}} \left|\left( \vc\cdot
    \nabla_{\z}\right)^N f_q(\z)\right|\,d\z\\
    &\lappr_{N} \left(\frac qL\right)^N |\vc|^{-N}
    \sum_{0\le n\le N}\int_{\R^{d}}\max_{0\le l\le n/2}
    \left|\frac{\partial^{n-l}}{\partial y^{n-l}} h\left(\frac
    qL, {  F^m}(\z)\right) \right|\\
        &\qquad\qquad\times|\z|^{n-2l}
    \left|
    \nabla_{\z}^{N-n} w(\z)\right|\,d\z
\,,
  \end{split}
\end{equation*}
where $\ds{\frac{\partial}{\partial y} h}$ stands for the derivative of  $h$ with respect to 
 the second argument. 

Assume first that $q\le L$. Then, by Lemma~\ref{l:5} with $N=0$,
\begin{equation*}
  \begin{split}
\max_{0\le l\le n/2}\left|\frac{\partial^{n-l}}{\partial y^{n-l}} h\left(\frac
    qL, {  F^m}(\z)\right) \right| |\z|^{n-2l}
    \left|    \nabla_{\z}^{N-n} w(\z)\right|\le\\ \left(
    L/q\right)^{n+1}\lan \z\ran^{-d-1} \|w\|_{N-n,n+d+1}\,.
  \end{split}
\end{equation*}
This implies \eqref{eq:19}  since $n\le N$. 
  Let now $q>L$. Then, due to item 1 of
Lemma~\ref{l:4}, $h$ is different from zero only if
\be\lbl{h ne 0}
2| {  F^m}(\z)| >\frac qL .
\ee  
Then for such $\z$ and for $l\le n$, item 3 of Lemma~\ref{l:4} implies that

$$
\left|\frac{\partial^{n-l}}{\partial y^{n-l}} h\left(\frac qL,  {  F^m}(\z)\right)\right| \lappr_{n-l} \frac Lq\frac{1}{| {  F^m}(\z)|^{n-l}}\lappr_{n-l}\Big(\frac{L}{q}\Big)^{n-l+1}.
$$
So 
\begin{equation*}
\begin{split}
\max_{0\le l\le n/2} \left|\frac{\partial^{n-l}}{\partial y^{n-l}} h\left(\frac
qL, {  F^m}(\z)\right) \right| |\z|^{n-2l}
\left|
\nabla_{\z}^{N-n} w(\z)\right|\lesssim\\ \max_{0\le l\le n}\frac{\left(
	L/q\right)^{n-l+1}}{\lan \z\ran^{2(N-n+l)}}
\frac{\|w\|_{N-n,2N-n+d+1}}{\lan\z\ran^{d+1}}\,.
\end{split}
\end{equation*}
Since from  \eqref{h ne 0} we have that $q/L \lappr_{{m}}
\lan\z\ran^2$,  
then the first fraction above is bounded by $(L/q)^{N+1}$,  and again 
 \eqref{eq:19} follows. 
\qed

As a corollary we get an estimate  for $J_>^{\gamma_1}$:
\begin{corollary}\label{c:J>}
  For $J_>^{\gamma_1}$ defined in \eqref{eq:J><} {  with $\gamma_1\in(0,1)$} and $d\ge3$ we have
  $$
|J_>^{\gamma_1}|\lappr_{{\gamma_1}{ , m}}\|w\|_{N_0,2N_0+d+1}\,,
$$
where $N_0:=\lceil {  d+(d+1)/{\gamma_1}}\rceil$.
\end{corollary}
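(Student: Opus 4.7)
The plan is to bound $|n(\vc)|$ uniformly in $q$ by combining the two main inputs already at hand: Lemma~\ref{l:25HB} giving $|S_q(\vc)| \lappr q^{d/2+1}$, and Proposition~\ref{l:19} giving $|\tilde I_q(\vc)| \lappr_{N,m} (L/q)\,|\vc|^{-N}\,\|w\|_{N,2N+d+1}$ for any $N>0$. Since by \eqref{Iq-t_Iq} we have $I_q(\vc)=L^{d}\tilde I_q(\vc)$, substituting these estimates term-by-term into $n(\vc)=\sum_{q\ge1}q^{-d}S_q(\vc)I_q(\vc)$ yields
\begin{equation*}
|n(\vc)| \lappr_{N,m} L^{d+1}\,|\vc|^{-N}\,\|w\|_{N,2N+d+1}\sum_{q\ge1}q^{-d/2+1-d+d/2+1}=L^{d+1}\,|\vc|^{-N}\,\|w\|_{N,2N+d+1}\sum_{q\ge 1}q^{-d/2}.
\end{equation*}
The $q$-series converges because $d\ge 3$ forces $d/2>1$; this is where the dimensional hypothesis is used.

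Next I would sum over lattice vectors $\vc\in\Z^{d}$ with $|\vc|>L^{\gamma_1}$. Using the standard comparison with the integral, for $N>d$ and any $R\ge 1$,
\begin{equation*}
\sum_{\vc\in\Z^{d},\,|\vc|>R}|\vc|^{-N}\lappr R^{d-N}.
\end{equation*}
Taking $R=L^{\gamma_1}$ and combining with the previous bound,
\begin{equation*}
|J_>^{\gamma_1}|\le \sum_{|\vc|>L^{\gamma_1}}|n(\vc)|\lappr_{N,m} L^{d+1+\gamma_1(d-N)}\,\|w\|_{N,2N+d+1}.
\end{equation*}

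To conclude, I choose $N$ so that both $N>d$ and the exponent of $L$ is non-positive. The latter reads $\gamma_1(N-d)\ge d+1$, i.e.\ $N\ge d+(d+1)/\gamma_1$; the natural choice is therefore $N=N_0=\lceil d+(d+1)/\gamma_1\rceil$, which satisfies both requirements simultaneously, absorbs the $L$-dependence into an $O(1)$ constant, and delivers the claimed bound $|J_>^{\gamma_1}|\lappr_{\gamma_1,m}\|w\|_{N_0,2N_0+d+1}$.

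There is no substantial obstacle here: the proof is a direct termwise estimate, and the only delicate point is balancing the polynomial growth $q^{d/2+1}$ of $S_q(\vc)$ against the $q^{-1}$ decay extracted from $\tilde I_q(\vc)$ by integration by parts in Proposition~\ref{l:19}, which is precisely what makes the $q$-sum convergent under the assumption $d\ge 3$. Everything else is a bookkeeping exercise in tuning $N$.
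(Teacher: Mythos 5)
Your argument is correct and follows essentially the same route as the paper's: bound $|S_q(\vc)|$ by Lemma~\ref{l:25HB}, bound $|I_q(\vc)| = L^d|\tilde I_q(\vc)|$ by Proposition~\ref{l:19}, note that the $q$-sum converges since $d\ge3$ gives $d/2>1$, and then pick $N=N_0$ so the $\vc$-tail sum absorbs the $L^{d+1}$ factor (the paper organizes the $\vc$-sum into $\ell^1$-shells of size $s^{d-1}$ rather than quoting the tail estimate, but this is the same computation). There is a small arithmetic typo in the displayed intermediate exponent of $q$ (it should read $q^{-d+(d/2+1)-1}=q^{-d/2}$); your final $\sum_q q^{-d/2}$ and everything downstream are correct.
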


{\it Proof.}
  Denoting by $|\cdot |_1$ the $l^1$-norm, by the definition of $J_>^{\gamma_1}$ we have
\begin{equation*}
  \begin{split}
|J_>^{\gamma_1}|&\lappr \sum_{s\ge L^{\gamma_1}} s^{d-1}\sum_{q=1}^\infty
q^{-d}\sup_{|\vc|_1=s}\ |S_q(\vc)| |I_q(\vc)|\\
&\lappr \sum_{s\ge L^{\gamma_1}} s^{d-1}\sum_{q=1}^\infty
q^{1-d/2}L^d\sup_{|\vc |_1=s} |\tilde I_q(\vc)|\\
&\lappr_{N{ , m}} \sum_{s\ge L^{\gamma_1}} s^{d-1}\sum_{q=1}^\infty
q^{-d/2} s^{-N} L^{d+1}\|w\|_{N,2N+d+1}\,,
  \end{split}
\end{equation*}
  where the second line follows from  Lemma~\ref{l:25HB}, while the
  third one -- from  Proposition~\ref{l:19}. The sum in $q$ is bounded by a constant. Choosing $N=N_0$ we  get that
  $$
  L^{d+1}\sum_{s\ge L^{\gamma_1}} s^{d-1} s^{-N} \le
  L^{d+1}\sum_{s\ge L^{\gamma_1}} s^{-1 - (d+1)/{\gamma_1}} \lappr 1 \,.
  $$
   This concludes the proof.
  \qed

 \section{ The $J^{\gamma_1}_<$ term }\label{sec:6}
 \subsection{The estimate}\label{subsec:6.1}
 Our next (and final) goal is to estimate the term $J^{\gamma_1}_<$ from \eqref{eq:N_L=}.
 
 \begin{proposition}\lbl{l:J_<}  For any $d\ge3$ and $\gamma_1\in(0,1/2)$,
 	$$|J_<^{\gamma_1}|\lappr_{{\gamma_1}{ ,m}}  L^{d/2+2+{\gamma_1}(d+1)} \left(\|w\|_{\bar
 		N,d+5}+\|w\|_{0,\bar N + 3d+4}\right) \,,$$
 	where
 	$\bar N=\bar N(d,{\gamma_1}):= \lceil d^2/{\gamma_1}\rceil-2d$. 
 \end{proposition}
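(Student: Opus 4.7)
The plan is to mimic the analysis of the $J_0$ term in Section~\ref{sec:4}, splitting $n(\vc) = \sum_{q \le \rho L} + \sum_{q > \rho L}$ at a threshold $\rho L$ with $\rho \in (0,1)$ to be chosen later. In the large-$q$ range I would combine Proposition~\ref{l:19} (with a large integration-by-parts index $N = \bar N$) with Lemma~\ref{l:25HB}; this yields, for each fixed $\vc \ne 0$,
\[
\sum_{q > \rho L} q^{-d}|S_q(\vc)|\,|I_q(\vc)| \lappr_{\bar N, m} L^{d+1}|\vc|^{-\bar N} (\rho L)^{1 - d/2} \|w\|_{\bar N,\, 2\bar N + d + 1}.
\]
Provided $\bar N > d$, the subsequent sum $\sum_{\vc \ne 0}|\vc|^{-\bar N}$ converges, so the overall contribution to $J_<^{\gamma_1}$ is bounded by $L^{d/2+2}\rho^{1-d/2}\|w\|_{\bar N,\, 2\bar N + d +1}$, which absorbs into the claimed bound after choosing $\rho$ as a suitable negative power of $L$.

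For the small-$q$ range I would follow the strategy of Proposition~\ref{l:I_q(0)=}: use the co-area formula to write
\[
\tilde I_q(\vc) = \int h\!\left(\tfrac{q}{L},\, t\right) \mathcal{J}_\vc(m+t)\, dt, \qquad \mathcal{J}_\vc(s) := \int_{\Sigma_s} w(\z)\, e^{-2\pi i L\vc\cdot\z/q}\, \mu^{\Sigma_s}(d\z),
\]
apply Corollary~\ref{c:9} (or Lemma~\ref{l:9}) to approximate the $t$-integral by $\mathcal{J}_\vc(m)$ up to an error of $(q/L)^{M-\delta}$ type, and then estimate $\mathcal{J}_\vc(m)$ itself by an integration-by-parts argument on the quadric $\Sigma_m$. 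The resulting decay of $\mathcal{J}_\vc(m)$ in the dimensionless frequency $L|\vc|/q$ provides the room needed to sum over $1 \le |\vc| \le L^{\gamma_1}$ and $q \le \rho L$ without exceeding the target $L^{d/2+2+\gamma_1(d+1)}$.

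The hard part is controlling $\mathcal{J}_\vc(m)$ and its $s$-derivatives (required to invoke Corollary~\ref{c:9}) with sharp decay in $L|\vc|/q$ and with explicit dependence on the norms $\|w\|_{\bullet, \bullet}$, especially when $m = 0$, so that $\Sigma_0$ has a conical singularity at the origin and $w$ need not vanish there. This is where the regularity theory of Section~\ref{sec:7} must be extended from the static integral $\sigma_\infty(w; A, t)$ to the oscillatory family $\mathcal{J}_\vc(s)$, and it constitutes the technical novelty compared with the argument of Heath-Brown (Lemmas~22 and~28 of \cite{HB}), which was written for $w \in C_0^\infty$ supported away from the origin. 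Once these estimates are in hand, balancing the cutoffs against $\gamma_1$ with the choice $\bar N = \lceil d^2/\gamma_1 \rceil - 2d$ yields the final bound with the stated dependence on $\|w\|_{\bar N, d+5}$ and $\|w\|_{0, \bar N + 3d + 4}$.
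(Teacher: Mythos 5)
Your proposal for the small-$q$ range is a genuinely different route from the paper's, and it contains a real gap that you yourself flag but do not close. The paper proves Proposition~\ref{l:J_<} by reducing to Lemma~\ref{l:22}, whose proof (Section~6.2) does \emph{not} go through the co-area formula. Instead it (i) replaces $h(q/L,\cdot)$ by its inverse Fourier transform $p(t)$, turning $\tilde I_q(\vc)$ into a double oscillatory integral $\int p(t)\int \tilde w(\z)\, e\bigl(tF(\z)-\vu\cdot\z\bigr)\,d\z\,dt$ with $\vu=\vc L/q$; (ii) smooths the amplitude $\tilde w$ at scale $\delta=|\vu|^{-1/2}$ by averaging against $W_0((\z-\va)/\delta)$; and (iii) splits $(\va,t)$-space into a good set $S_R$, where the phase gradient $\nabla f(0)$ is bounded below and repeated integration by parts in $\vb$ gives $R^{-N}$ decay (Lemma~\ref{l:good_set}), and a bad set $S_R^c$, whose measure is small (Lemma~\ref{l:bad_set}). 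The balance $R=|\vu|^{\gamma_1/(2d)}$, $\beta=\gamma_1/2$, $N=\bar N$ yields exactly the $L^{d/2+1+\gamma_1}q^{d/2-1}$ bound that the proposition then sums over $q$ and over $O(L^{d\gamma_1})$ lattice points $\vc$.

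By contrast, you propose to follow the $\vc=0$ strategy of Proposition~\ref{l:I_q(0)=}: apply the co-area formula to get $\tilde I_q(\vc)=\int h(q/L,t)\,\mathcal{J}_\vc(m+t)\,dt$, then Corollary~\ref{c:9}, then estimate $\mathcal{J}_\vc(m)$ and its $s$-derivatives. But the crux of the whole matter is precisely the decay of $\mathcal{J}_\vc(s)$ in $L|\vc|/q$ together with its $s$-regularity at $m=0$, where $\Sigma_0$ is a cone with a singular vertex at which $w$ need not vanish and the phase $\vc\cdot\z$ has no critical point. Section~7 only treats the non-oscillatory integrals $\sigma_\infty(w;A,t)$; nothing in the paper (nor in Heath-Brown) supplies the oscillatory analogue, and getting it would require a new stationary-phase analysis on a singular quadric with quantitative norm-dependence --- in effect, a different and at least comparably hard version of what Lemma~\ref{l:22} accomplishes by avoiding the co-area decomposition altogether. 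Since you explicitly leave this step unproved, the proposal does not establish the statement.

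Two smaller remarks. First, in the large-$q$ range you invoke Proposition~\ref{l:19} and sum $\sum_{\vc\ne0}|\vc|^{-\bar N}$; this works, but it costs you the norm $\|w\|_{\bar N,\,2\bar N+d+1}$, which is stronger in the decay index than the $\|w\|_{\bar N,d+5}+\|w\|_{0,\bar N+3d+4}$ appearing in the statement. The paper's $J_+$ instead uses only the trivial $|I_q(\vc)|\lappr L^{d+1}q^{-1}|w|_{L_1}$ from Corollary~\ref{c:h} and absorbs the crude count of $\vc$'s into the harmless factor $L^{d\gamma_1}$; this needs only $\|w\|_{0,d+1}$. Second, the paper does not split at a variable threshold $\rho L$ inside the proof of Proposition~\ref{l:J_<}; it simply splits at $q=L$. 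The $\rho=L^{-\gamma_2}$ threshold is a feature of the $J_0$ analysis in Section~4, not of the $J_<^{\gamma_1}$ analysis, so importing it here adds bookkeeping without benefit.
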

 Proposition~\ref{l:J_<} will follow from the next lemma   which is a
 modification of Lemma~22 in \cite{HB} and is proved in the next subsection: 
 \begin{lemma}\lbl{l:22}
 For {  any $d\geq 3$} and 	$\vc\ne 0$,
 	$$|I_q(\vc)|\lappr_{{\gamma_1}{ ,m}}
 	L^{d/2+1+{\gamma_1}} 
 		\big (q / |\vc|\big)^{d/2-1-\ga_1}
 	 \left(\|w\|_{\bar N,d+5}+\|w\|_{0,\bar N
 		+3d+4}\right)\,,
 	$$
 	where $\bar N$ and $\gamma_1$ are the same as above.
 \end{lemma}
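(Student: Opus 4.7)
The plan is to adapt the argument of Heath-Brown's Lemma~22 to our setting, where $w$ is only finitely smooth and not compactly supported. Working with $\tilde I_q(\vc) = L^{-d} I_q(\vc)$ from \eqref{tilde_I_q}, the strategic observation is that $e_q(-\z \cdot \vc L)$ has a non-stationary linear phase along $\hat\vc := \vc/|\vc|$. Integrating by parts $N$ times using
\[
e_q(-\z \cdot \vc L) = \Bigl(\tfrac{iq}{2\pi |\vc| L}\Bigr)(\hat\vc \cdot \nabla_\z) e_q(-\z \cdot \vc L)
\]
transfers the derivatives onto the amplitude $w(\z) h(q/L, F^m(\z))$ and pulls out a factor $(q/(L|\vc|))^N$. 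One then applies the Leibniz rule, distributing the $N$ derivatives between $w$ and the composite $h(q/L, F^m(\z))$.

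To estimate the derivatives of the composite, I would invoke Lemmas~\ref{l:4} and \ref{l:5}: by the chain rule, each $\z$-derivative of $h(q/L, F^m(\z))$ produces $\p_y^j h$ times polynomials in $A\z$. In the near-cone region $\{|F^m(\z)| \lesssim q/L\}$ (of diameter $\sim \sqrt{q/L}$ and measure $\sim (q/L)^{d/2}$), item~2 of Lemma~\ref{l:4} shows that for $q \leq L$ the function $h$ is $\z$-independent, so only derivatives of $w$ contribute, controlled by $\|w\|_{\bar N, d+5}$. In the complementary region, the bound $|\p_y^j h| \lesssim (L/q)|F^m(\z)|^{-j}$ from Lemma~\ref{l:4}(3) combines with $|A\z|^j \lesssim \lan\z\ran^j$ and $|F^m(\z)| \sim \lan\z\ran^2$ (for $|m|$ bounded) to absorb the chain-rule factors, yielding effective decay $\lan\z\ran^{-j}$ per derivative of $h$; this is balanced against the polynomial decay of $w$ encoded in $\|w\|_{0, \bar N + 3d + 4}$.

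Collecting the near-cone contribution (bounded by $(q/(L|\vc|))^N (L/q)(q/L)^{d/2} \|w\|_{\bar N, d+5}$) and the far contribution (a similar expression involving $\|w\|_{0, \bar N + 3d+4}$), and choosing $N = \bar N = \lceil d^2/\gamma_1 \rceil - 2d$ to optimise the interplay with $|\vc| \leq L^{\gamma_1}$, leads to the claimed bound on $I_q(\vc)$ after multiplication by $L^d$. Proposition~\ref{l:J_<} then follows by inserting Lemma~\ref{l:22} into the definition of $J_<^{\gamma_1}$, summing over $\vc$ with $0 \neq |\vc| \le L^{\gamma_1}$ (yielding $\sim L^{\gamma_1 d}$ terms), and over $q$ using $|S_q(\vc)| \lappr q^{d/2+1}$ from Lemma~\ref{l:25HB} together with the effective cutoff $q \lesssim L$ coming from the support properties of $h$.

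The main obstacle is the intermediate region $|F^m(\z)| \sim q/L$ where the two regimes of Lemma~\ref{l:4} meet and the derivatives of $h$ must be handled uniformly across the transition. The non-compactness of $\supp w$ additionally forces a trade-off between $\lan\z\ran$-decay of $w$ and the polynomial growth $|A\z|^j$ from the chain rule, which is precisely what necessitates the appearance of two distinct norms of $w$ with quite different indices in the statement; arranging the splitting and choosing exponents so that the exponent $\bar N = \lceil d^2/\gamma_1\rceil - 2d$ does optimise the final estimate is the delicate bookkeeping step of the proof.
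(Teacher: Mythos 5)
Your proposed integration by parts against the \emph{linear} phase $e_q(-\z\cdot\vc L)$, pushing derivatives onto $w\cdot h(q/L,F^m(\z))$, is essentially the mechanism used in Proposition~\ref{l:19}, and it yields the bound $|I_q(\vc)|\lappr L^{d+1}q^{-1}|\vc|^{-N}$. That bound is already proved in the paper, and it is \emph{not} strong enough for Lemma~\ref{l:22}: for the critical case $|\vc|\sim 1$ and small $q$ it gives $L^{d+1}$, whereas the claimed bound is only $L^{d/2+1+\gamma_1}q^{d/2-1}$, an improvement of order $L^{d/2-\gamma_1}$ which cannot be extracted from the rapid decay in $|\vc|$ when $|\vc|=1$. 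The bookkeeping sketch you give also rests on an incorrect measure estimate: the set $\{|F^m(\z)|\lesssim q/L\}$ intersected with a bounded ball is a codimension-one tubular neighbourhood of the quadric, so its measure is $\sim q/L$, not $\sim(q/L)^{d/2}$ (the latter is only the contribution of the ball $|\z|\lesssim\sqrt{q/L}$ near the vertex of the cone); the larger part of the neighbourhood dominates for $d\ge 2$, and even substituting the hypothetical $(q/L)^{d/2}$ factor the exponent count does not close uniformly over $q\le L$.

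The missing idea is that the quadratic dependence of $h$'s second argument on $\z$ must itself be exploited as an oscillation. The paper does this by writing $w(\z)=\tilde w(\z)\,w_2(F^m(\z))$ with $w_2(x)=(1+x^2)^{-1}$, Fourier transforming the product $w_2(\cdot)h(q/L,\cdot)$ to obtain $p(t)$, and thereby rewriting $w(\z)h(q/L,F^m(\z))=\tilde w(\z)\int p(t)\,e(tF^m(\z))\,dt$. The resulting inner integral has the combined phase $tF(\z)-\vu\cdot\z$ with $\vu=\vc L/q$, and the $q^{d/2-1}$ gain comes from a stationary-phase analysis of that phase: the paper mollifies $\tilde w$ at scale $\delta=|\vu|^{-1/2}$, splits $(\va,t)$-space into a ``good'' region $S_R$ where $\nabla f$ is bounded below (and IBP in $\vb$ gives $R^{-N}$), and a ``bad'' region $S_R^c$ localized near the critical point $\va\approx t^{-1}A^{-1}\vu$ whose measure in $\va$ is controlled by $\sim(R/(\delta|t|))^d\lan t/|\vu|\ran^d$, producing the decisive factor $|\vu|^{-d/2+1+\beta}=(L|\vc|/q)^{-d/2+1+\beta}$. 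That mechanism, not a measure estimate for the near-cone region nor a larger $N$ in the linear-phase IBP, is what supplies the needed power of $q/L$ uniformly in $q\le L$.
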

 
 {\it Proof of Proposition \ref{l:J_<}}.
 Accordingly to Lemma~\ref{l:25HB},
 \begin{align*}
 |J^{\gamma_1}_<| &\lappr \!
 \sum_{\vc\ne 0,\,|\vc|\leq
 	L^{{\gamma_1}}} \sum_{q=1}^\infty q^{-d}
 q^{d/2+1} |I_q(\vc)|\lappr L^{d{\gamma_1}}\!\! \max_{\vc\ne0:\,|\vc|\leq L^{\gamma_1}} |I_q(\vc)|\,
 \sum_{q=1}^\infty q^{-d/2+1}
 \\
 &=L^{d{\gamma_1}} \big(\sum_{q<L} + \sum_{q\geq L}\big) q^{-d/2+1}
 \max_{\vc\ne0:\,|\vc|\leq L^{\gamma_1}} |I_q(\vc)|=J_{-} + J_{+}\ ,
 \end{align*}
     with
\begin{align*}
J_- :=L^{d{\gamma_1}} \sum_{q<L} q^{-d/2+1}
\max_{\vc\ne0:\,|\vc|\leq L^{\gamma_1}}
|I_q(\vc)|\ ,\\
J_+:=L^{d{\gamma_1}} \sum_{q\geq L} q^{-d/2+1}
\max_{\vc\ne0:\,|\vc|\leq L^{\gamma_1}}
|I_q(\vc)|\ .
\end{align*}
 Corollary~\ref{c:h} together with \eqref{Iq-t_Iq}, \eqref{tilde_I_q} implies
 \be\lbl{Iq_large_q}
 |I_q(\vc)|\lappr \frac{ L^{d+1}}q |w|_{L_1}\,,
 \ee
 so that
 $$
 J_{+}\lappr L^{d{\gamma_1}}  L^{d+1} |w|_{L_1} \sum_{q\geq L} q^{-d/2}  \lappr  L^{d{\gamma_1} + d/2+2} |w|_{L_1}\lappr L^{d{\gamma_1} + d/2+2} \|w\|_{0,d+1}.
 $$
On the other hand,  since $|\vc|\geq 1$, from  Lemma~\ref{l:22} we get
 \begin{equation*}
 \begin{split}
 J_{-}
 &\lappr_{{\gamma_1}{ ,m}}  L^{d{\gamma_1}}  L^{d/2+1+{\gamma_1}}\left(\|w\|_{\bar N,d+5}+\|w\|_{0,\bar N
 	+3d+4}\right)\sum_{q< L} q^{-\ga_1}
 \\
 &\le   \left(\|w\|_{\bar N,d+5}+\|w\|_{0,\bar N
 	+3d+4}\right) L^{{\gamma_1}(d+1)+d/2+2}\,.
 \end{split}
 \end{equation*}
 \qed

\subsection{Proof of Lemma~\ref{l:22}}
We begin with
\subsubsection{Application of the inverse Fourier transform}
  Note that the proof is nontrivial only for $q\lappr L|\vc|$: indeed,   for
 any $\alpha>0$ the bound \eqref{Iq_large_q} implies
 that 
 $$
 |I_q(\vc)| \lappr_\alpha L^d|w|_{L_1} \lappr_{\alpha} L^d \big(L|\vc| / q\big)^{-d/2+1+\ga_1} |w|_{L_1}\, \quad \text{if} \quad q\ge \alpha L |\vc|,
 $$
 since $|\vc|\ge 1$ and $-d/2+1+\ga_1<0$. So, it remains to use again   the inequality $|w|_{L_1}\lappr \|w\|_{0,d+1}.$
 
 Let us take a small enough $\alpha=\alpha(d,\gamma_1, A) \in(0,1)$   and assume that $q< \alpha L|\vc|$. Consider the 
 function $w_2(x)=1/(1+x^2)$ and set 
 \be\lbl{tilde_w_def}
 \tilde w(\z):= \frac{w(\z)}{w_2( {  F^m}(\z))} = {w(\z)} (1+  {  F^m}(\z)^2). 
 \ee
 
 Let
 \be\lbl{p(t)=}
 p(t):= \int_{-\infty}^{+\infty}  w_2(v) h(q/L,v)  e(-tv) \,dv, \quad e(x):=e_1(x)=
 e^{2\pi i x}.
 \ee
 This is  the Fourier transform of function $w_2(\cdot)h(q/L,\cdot)$.
 Then, expressing $w_2 h$ via $p$ 
 by the inverse Fourier transform and writing $w(\z)=\tilde w(\z)w_2( {  F^m}(\z))$, we find that 
 $$
 w(\z)h(q/L, {  F^m}(\z))= \tilde w(\z)\int_{-\infty}^{+\infty} p(t)e(t {  F^m}(\z))\,dt.
 $$
 Inserting this representation into \eqref{tilde_I_q} we get
 \begin{equation*}
 \tilde I_q(\vc)=\int_{-\infty}^{+\infty} p(t){ e(-tm)}\,\left(\int_{\R^{d}} \tilde w(\z) e\big(tF(\z)-\vu\cdot \z\big) \, d\z \right)\,dt, \quad  \vu:=\vc\, L/q.
 \end{equation*}
 Note that 
 $$
 | \vu|   =|\vc| L/q> \alpha^{-1} >1
 $$ 
 since $q<\alpha |\vc| L $.   Now let us 
   denote
 $
 W_0(x) = c_0^{-d}\prod_{i=1}^{d}w_0(x_i) 
 $
 (see \eqref{eq:or_weight}). Then $W_0\in C_0^\infty (\R^d)$, $W_0\ge0$ and 
 \be\label{then}
  \supp W_0 =[-1, 1]^d \subset \{ x \in \R^d: |x| \le \sqrt{d} \},\quad \int_{\R^d} W_0(x)\,dx =1.
 \ee
 Let us set
 $
 \de  = | \vu|^{-1/2} < \sqrt\alpha
 $
 and write $\tilde w$ as 
 $$
 \tilde w(\z) = \de^{-d} \int_\R^d W_0\left( \frac{\z-\va}{\de}\right) \tilde w(\z)\,d\va. 
 $$
 Then  setting $\vb:=\displaystyle{\frac{\z-\va}{\de}}$  we get that 
 $$
 |\tilde I_q(\vc)|\leq \int_{\R^{d}}\int_{-\infty}^{+\infty}  |p(t)| 
 |I_{\va,t}|\, dt\,d\va,
 $$
 where in view of \eqref{then}, 
 \begin{equation*}
 I_{\va,t}:=\int_{\{ |\vb| \le \sqrt{d} \} }  W_0(\vb) \tilde w(\z) \, e(tF(\z) - \vu\cdot\z)\,d\vb, \qquad \z:=\va+\de\vb.
 \end{equation*} 
 Consider the exponent in the integral $ I_{\va,t}$:
 $$
 f(\vb)=f_{\va,t}(\vb):=tF(\va+ \de\vb) - \vu\cdot (\va+ \de\vb).
 $$
 At  the next step we will estimate  integral $I_{\va,t}$, 
 regarding  $(\va,t)$ as a parameter.
Consider another parameter $R$, satisfying
 $$\ds{1\leq  R\leq |\vu|^{1/3}};$$
its value will be chosen later.
Below we distinguish two cases:

 1. $(\va,t)$ belongs to the   "good" domain $S_R$, where 
 $$
 S_R = \big\{ ( \va,t): 
  |\nabla f(0)|=\de |t A\va - \vu| \ge 
  R\big\lan  t /  |\vu |\big\ran =R\lan \de^2t\ran \big\};
 $$ 
 
 2. $(\va,t)$ belongs to the  "bad" set ${S_R}^c = (\R^d \times \R) \setminus S_R$. 
 \smallskip

 \subsubsection{Integral over $S_R$.} 
 We consider first the integral over the good set~$S_R$: 
 \begin{lemma}\lbl{l:good_set}
 	For any {   $d\geq 1$}, $N\ge 0$ and $R\ge 2\|A\|\sqrt{d}$ we have
 	\be\lbl{int_to_bound}
 	\int_{S_R} |p(t)| \, |I_{\va, t} | \,  d\va\, dt
 	\lappr_{N,{ m}} \frac{L}{q}R^{-N}\|w\|_{N,d+5}\,.
 	\ee
 \end{lemma}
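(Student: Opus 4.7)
Since the phase $f(\vb)=tF(\va+\delta\vb)-\vu\cdot(\va+\delta\vb)$ is quadratic in $\vb$, we have $\nabla_\vb f(\vb)=\delta(tA\va-\vu)+\delta^2 tA\vb$, all $\vb$-derivatives of $f$ of order $\geq 3$ vanish, and $|\partial^2_\vb f|\lappr\delta^2|t|\leq\langle\delta^2 t\rangle$. On $S_R$ the definition gives $|\nabla_\vb f(0)|\geq R\langle\delta^2 t\rangle$, while the linear correction satisfies $|\delta^2 tA\vb|\leq\|A\|\sqrt d\,\langle\delta^2 t\rangle$ for $|\vb|\leq\sqrt d$; hence the assumption $R\geq 2\|A\|\sqrt d$ produces the uniform non-stationary bound $|\nabla_\vb f(\vb)|\geq(R/2)\langle\delta^2 t\rangle$, which is the starting point for integration by parts.

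Next I would apply $N$ times the adjoint of $L=(2\pi i|\nabla f|^2)^{-1}\nabla_\vb f\cdot\nabla_\vb$ to the amplitude $g(\vb)=W_0(\vb)\tilde w(\va+\delta\vb)$. Because higher derivatives of $f$ vanish and $|\partial^2 f|/|\nabla f|^2\lappr 1/(R^2\langle\delta^2 t\rangle)$, a routine induction produces
\begin{equation*}
|I_{\va,t}|\lappr_N R^{-N}\langle\delta^2 t\rangle^{-N}\max_{|\alpha|\leq N}\sup_{|\vb|\leq\sqrt d}\bigl|\partial^\alpha_\vb[W_0(\vb)\tilde w(\va+\delta\vb)]\bigr|.
\end{equation*}
Since $\delta\leq 1$ and $\tilde w(\z)=w(\z)(1+F^m(\z)^2)$ with $F^m(\z)=O_m(\langle\z\rangle^2)$, the product rule gives $\|\tilde w\|_{N,d+1}\lappr_m\|w\|_{N,d+5}$, and $\langle\va+\delta\vb\rangle\sim\langle\va\rangle$ on $|\vb|\leq\sqrt d$, so the supremum is $\lappr\|w\|_{N,d+5}\langle\va\rangle^{-(d+1)}$. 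Integration over $\va$ then contributes the finite constant $\int\langle\va\rangle^{-(d+1)}d\va$, leaving the task of proving $\int|p(t)|\langle\delta^2 t\rangle^{-N}dt\lappr L/q$.

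For that remaining $t$-integral I would combine two complementary bounds on $p(t)=\widehat{w_2 h(q/L,\cdot)}(t)$: the crude estimate $\sup_t|p(t)|\leq|w_2 h(q/L,\cdot)|_{L^1}\lappr L/q$ (from Corollary \ref{c:h}) and the decay obtained by $M$-fold $v$-integration by parts -- here Lemma \ref{l:4}(2) makes $\partial_v^j h(q/L,v)=0$ on $|v|<q/(2L)$ for $j\geq 1$ (since $q/L\leq\alpha<1$), while Lemma \ref{l:4}(3) gives $|\partial_v^j h(q/L,v)|\lappr(L/q)|v|^{-j}$ on $|v|\geq q/(2L)$, which combined with the decay of $w_2$ yields $|p(t)|\lappr_M(L/q)^M|t|^{-M}$. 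Splitting the $t$-integral at the natural scales $|t|\sim 1$, $L/q$, and $\delta^{-2}=|\vc|L/q$, and using the weight $\langle\delta^2 t\rangle^{-N}$ in the outermost regime, one recovers the target $L/q$.

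I expect the main obstacle to be precisely this last step: the naive bound $\int|p(t)|\langle\delta^2 t\rangle^{-N}dt\leq\sup|p|\cdot\int\langle\delta^2 t\rangle^{-N}dt\lappr|\vc|(L/q)^2$ overshoots by a factor $|\vc|L/q$, and only the sharper $v$-IBP decay of $p(t)$ -- together with the structural dichotomy of $h(q/L,v)$ across the transition $|v|\sim q/L$ -- allows one to extract the correct $L/q$ factor without extraneous powers of $|\vc|$.
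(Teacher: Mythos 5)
Your treatment of the $\va$-integral is essentially the paper's: a non-stationary-phase bound $|\nabla_\vb f|\ge \tfrac12 R\lan\delta^2t\ran$ on $S_R$ (the paper works with the fixed direction $\vl=\nabla f(0)/|\nabla f(0)|$ and the operator $\cL=\vl\cdot\nabla_\vb$, which kills all derivatives of $f$ of order $\ge3$ and makes the induction explicit, but your full-gradient variant is equivalent), followed by $\|\tilde w\|_{N,d+1}\lappr_m\|w\|_{N,d+5}$ and integration in $\va$. One remark: the factor $\lan\delta^2t\ran^{-N}$ you carry along is not needed and does not help; the paper simply uses $|1/\cL f|\le 2/R$, keeps only $R^{-N}$, and reduces everything to the single estimate $\int_{\R}|p(t)|\,dt\lappr L/q$.

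That last estimate is where your proposal has a genuine gap, and you half-acknowledge it. The two bounds you actually state for $p$ — namely $\sup_t|p(t)|\le |w_2\,h(q/L,\cdot)|_{L^1}\lappr L/q$ (via Corollary~\ref{c:h}) and $|p(t)|\lappr_M (L/q)^M|t|^{-M}$ — do not combine to give $\int|p|\lappr L/q$: on the region $|t|\le L/q$ the first bound yields $(L/q)^2$, and on $1\le|t|\le L/q$ the second yields $(L/q)^M\int_1^{L/q}t^{-M}dt\lappr(L/q)^M$, again an overshoot; the weight $\lan\delta^2t\ran^{-N}$ only becomes active for $|t|\gtrsim|\vc|L/q$ and cannot repair this. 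The missing ingredient is that $h(q/L,\cdot)$ is not merely bounded by $L/q$ but is \emph{concentrated} on $|v|\lappr q/L$: by Lemma~\ref{l:5} with $m=n=0$, $N=2$, one has $|h(x,v)|\lappr x^{-1}(x^2+\min\{1,x^2/v^2\})$, whence $\int|w_2(v)h(x,v)|\,dv\lappr 1$ and therefore $\sup_t|p(t)|\lappr 1$ (this is the $M=0$ case of \eqref{p(t)<}). With $|p(t)|\lappr 1$ on $|t|\le L/q$ and $|p(t)|\lappr (q|t|/L)^{-2}$ on $|t|>L/q$ one gets $\int|p|\lappr L/q + (L/q)^2\cdot(q/L)\lappr L/q$, which is exactly how the paper closes the argument. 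Your phrase ``structural dichotomy of $h$ across $|v|\sim q/L$'' points at the right phenomenon, but the proof as written stops short of extracting from it the quantitative bound $\sup_t|p(t)|\lappr 1$ that the conclusion requires.
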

 
 {\it Proof.}
  Let $\vl:=\nabla f(0)/|\nabla
 f(0)|$  and  $\cL =\vl\cdot \nabla_\vb$.  Then for $(\va,t)\in S_R$ and $|\vb|\leq \sqrt{d}$ (see \eqref{then}), 
 \be\lbl{nabla_f}
 \begin{split}
   |\cL f(\vb)|& = \left|\cL f(0) +\delta^2 t \nabla
     f(0)\cdot A\vb/|\nabla f(0)|\right|
   \ge
   |\nabla f(0)| - \delta^2|t||A\vb|\\
&   \ge  R\lan\de^2 t \ran
 - \de^2|t|\|A\| \frac{R}{2\|A\|} \ge \tfrac12 R\lan\de^2 t \ran
 \geq R/2.
\end{split} \ee
 Since 
 $
 (2\pi i \cL f(\vb))^{-1} \cL e(f(\vb)) = e(f(\vb)), 
 $
 then integrating by parts  $N$ times  integral $ I_{\va,t}$   we get 
 $$
 |I_{\va,t}| \lappr_{N}
 \max_{|b_i|\le
 	1\, \forall i}\,\max_{0\leq k\leq N}\left|\cL^{N-k} \tilde
 w(\delta \vb +\va)\frac{\big(\cL^2f(\vb)\big)^k}{\big(\cL f(\vb)\big)^{N+k}}\right| \,,
 $$
 where we have used that $\cL^m f(\vb)=0$ for  $m\geq 3$. 
 Since $|\cL^2f(\vb)|\leq \de^2|t||\vl\cdot A\vl|\leq\de^2|t|\|A\|$, then  in view of \eqref{nabla_f} 
 $$
 \left|\frac{\cL^2 f(\vb)}{\cL f(\vb)}\right|  \leq\frac{\de^2|t|\|A\|}{\tfrac12 R\lan\de^2 t \ran }=\frac{2\|A\|}{R}\leq \frac{1}{\sqrt{d}}.
 $$
 So using that $\ds{\Big|\frac{1}{\cL f(\vb)}\Big|\leq\frac2R}$ by \eqref{nabla_f}, we find
 $$
 |I_{\va,t}| \lappr_{N} R^{-N}
 \max_{|b_i|\leq 1 \,\forall i}\,\max_{0\leq k\leq N}\left|\cL^{k} \tilde
 w(\delta \vb +\va)\right|. 
 $$
 Thus, denoting by  ${\bf 1}_{S_R}$  the indicator function of the set $S_R$, we have 
 \be\non
 \begin{split}
 	\int_{\R^{d}}|I_{\va,t}|{\bf 1}_{S_R}\,d\va
 	&\lappr_{N} R^{-N}\!\int_{\R^{d}}  
 	\Big(\lan\va\ran^{d+1} \!\!
 	\max_{|b_i|\le 1\,\forall i}\, \max_{0\leq k\leq N}\big|\cL^k\tilde
 	w(\delta \vb +\va)\big| \Big)\, \frac{d\va}{\lan\va\ran^{d+1}}
 	\\
 	& \lappr_{N} R^{-N}\|\tilde w\|_{N,d+1}
        \lappr_{N,{ m}} R^{-N}\|w\|_{N,d+5}  \,,
 \end{split}
 \ee
 for every $t$.  Then 
 \be
 \mbox{l.h.s. of \eqref{int_to_bound}}\lappr_{N,m} R^{-N}\|w\|_{N,d+5} \int_{-\infty}^{+\infty} |p(t)| \, dt.
 \ee

 To prove  \eqref{int_to_bound} it   remains to show that
 \begin{equation}\label{intp(t)}
 \int_{-\infty}^\infty|p(t)|dt\lappr L/q\,.
 \end{equation}
 In virtue of Lemma~\ref{l:5} with $N=2$,
 $$
 \Big|\frac{\p^k}{\p v^k} h(x,v)\Big| \lappr_k x^{-k-1}\min\{1,x^2/v^2\}\,,\quad k\geq 1,
 $$
 and by Corollary~\ref{c:h}, $|h(x,v)|\lappr x^{-1}$.
 Then an integration by parts in \eqref{p(t)=} shows that,
 for any $M\geq 0$, 
 \[\begin{split}
 |p(t)| & \lappr_M |t^{-M}|
 \Big( 
 \int_{-\infty}^\infty
 |w_2^{(M)}(v)|x^{-1}\,dv \\
 &+  \max_{1\leq k \leq M}\int_{-\infty}^\infty
 |w_2^{(M-k)}(v)| \, x^{-k-1}\min\big\{1,\frac{x^2}{v^2}\big\}\,dv\Big),
 \end{split}
 \] 
 where $x:=q/L$. Writing  the latter integral as a sum $\int_{|v|\leq x} + \int_{|v|>x}$ we see that
 $$
 \int_{|v|\leq x}= x^{-k-1}\int_{|v|\leq x} |w_2^{(M-k)}(v)|\,dv \lappr_{{  M}} x^{-k}
 $$
 and 
 $$
 \int_{|v|>x} = x^{-k+1}\int_{|v|>x} \frac{ |w_2^{(M-k)}(v)|}{v^2}\,dv \lappr_{  M} x^{-k}.
 $$
 Then, for any $M\ge 0$
 \begin{equation}\lbl{p(t)<}
 |p(t)| \lappr_M \left(\frac{q}{L}|t|\right)^{-M}\qmb{if}\qu \frac qL <1 \qmb{and}\qu 
 |p(t)| \lappr_M \left(\frac{q}{L}\right)^{-1} |t|^{-M} \qmb{if}\qu \frac qL \ge 1.
 \end{equation}
 Choosing $M=2$ when $|t|>\lan L/q \ran$ and $M=0$ when $|t|\leq \lan L/q \ran$ we get \eqref{intp(t)}.
 \qed

 \subsubsection{Integral over  ${S_R}^c$.} 
 
 Now we study the integral over the bad set ${S_R}^c$.
 \begin{lemma}\label{l:bad_set}
 	For any {   $d\geq 1$}, $1\le R\le |\vu|^{1/3}$ and {  $0<\beta<1$ }we have
 	$$
 	\int_{S_R^c} |p(t)| |I_{\va, t}| 
 		\, d\va \,dt
 	\lappr_{{m}}  R^{d}|\vu|^{-d/2+1+\beta}\|w\|_{0,K(d,\beta)}\,,
 	$$
 	where $K(d,\beta)=  d+\lceil d^2/2\beta\rceil+4$.
 \end{lemma}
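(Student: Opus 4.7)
The strategy relies on three pointwise ingredients. First, the uniform bound $|p(t)|\lappr 1$, obtained by setting $M=0$ in~\eqref{p(t)<}, together with the polynomial decay $|p(t)|\lappr_M(q|t|/L)^{-M}$ for $M\ge 1$. Second, the trivial amplitude bound $|I_{\va,t}|\lappr_m\|w\|_{0,K}\lan\va\ran^{-K+4}$, obtained by bounding the $\vb$-integral by its $L^1$-norm and using $|\tilde w(\z)|\lappr_m\|w\|_{0,K}\lan\z\ran^{-K+4}$ (since $\tilde w=w\cdot(1+(F^m)^2)$ and $|F^m(\z)|\lappr_m\lan\z\ran^2$) together with $\lan\va+\de\vb\ran\asymp\lan\va\ran$ on the support of $W_0$, valid because $\de\le 1$. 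Third, a description of $S_R^c$: for each $t\neq 0$ the slice $\{\va:(\va,t)\in S_R^c\}$ is the Euclidean ball of radius $r_t\sim R\lan\de^2 t\ran/(|t|\de)$ around $A^{-1}\vu/t$. Since $R\le|\vu|^{1/3}\ll|\vu|^{1/2}$, in the regime $|t|\le|\vu|$ this ball is thin relative to its center distance (forcing $|\va|\sim|\vu|/|t|$ throughout the slice), while for $|t|>|\vu|$ the slice collapses to a ball of radius $R/|\vu|^{1/2}$ lying in a bounded region.

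Given these ingredients, I would decompose the $(\va,t)$-integration dyadically in $|\va|$. Setting $A_j=2^j$ and restricting to $|\va|\in[A_j,2A_j]$ with $j\ge 0$, the geometric coupling forces $|t|$ into an interval of length $\sim|\vu|/A_j$, the $\va$-measure of the bad slice inside the shell is $\lappr(RA_j/|\vu|^{1/2})^d$, and the resulting $(\va,t)$-measure of the shell is $\lappr R^d A_j^{d-1}|\vu|^{-d/2+1}$. Multiplying by the pointwise bounds $|p(t)|\lappr 1$ and $|I_{\va,t}|\lappr A_j^{-K+4}$ and summing,
\[
\sum_{j\ge 0}R^d|\vu|^{-d/2+1}A_j^{d+3-K}\lappr R^d|\vu|^{-d/2+1},
\]
provided $K>d+3$. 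In the sub-regime $A_j<|\vc|$ (equivalently $|t|>L/q$) one replaces $|p(t)|\lappr 1$ by the decay $|p(t)|\lappr(q|t|/L)^{-M}\sim(|\vc|/A_j)^{-M}$, producing an extra $|\vc|^{-M}$ factor that only strengthens the estimate. The complementary regime $|t|>|\vu|$ (with $|\va|$ bounded) is treated analogously: the slice measure is $\lappr R^d|\vu|^{-d/2}$, $\lan\va\ran^{-K+4}\lappr 1$, and $\int_{|\vu|}^\infty|p(t)|\,dt\lappr|\vu|\cdot|\vc|^{-M}$, producing a contribution $\lappr R^d|\vu|^{-d/2+1}|\vc|^{-M}$.

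Since the choice $K(d,\beta)=d+\lceil d^2/(2\beta)\rceil+4$ comfortably satisfies $K>d+3$, all dyadic sums converge absolutely with a constant uniform in the parameters, giving an overall bound $\lappr R^d|\vu|^{-d/2+1}\|w\|_{0,K(d,\beta)}$; this is strictly stronger than the target $R^d|\vu|^{-d/2+1+\beta}\|w\|_{0,K(d,\beta)}$ in view of $\beta>0$ and $|\vu|\ge\alpha^{-1}>1$. The main obstacle is the careful book-keeping at the transitional regime $A_j\sim|\vc|$ (equivalently $|t|\sim L/q$), where $|p(t)|$ switches between its plateau and polynomial-decay regimes: one must verify that the dyadic $|\va|$-decomposition aligns cleanly with the $|t|$-regimes of $|p(t)|$ and confirm that the hypothesis $R\le|\vu|^{1/3}$ is exactly what is needed to keep each slice $S_R^c(t)$ thin enough that the localization of $|\va|$ is unambiguous.
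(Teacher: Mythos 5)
Your argument is correct and takes a genuinely different route from the paper's. The paper splits the $t$-integration into $|t|\ge |\vu|^{1-\beta/d}$ (where it bounds the slice measure uniformly by $R^d|\vu|^{-d/2+\beta}$, producing the factor $|\vu|^{\beta}$) and $|t|\le |\vu|^{1-\beta/d}$ (where it replaces the indicator $\mathbf 1_{S_R^c}$ by the smooth weight $|\vu|^{-d/2+\beta/d}|\va|^{d^2/2\beta-1}$, whose power forces the heavy decay requirement $K(d,\beta)=d+\lceil d^2/2\beta\rceil+4$ to make the $\va$-integral converge). Your dyadic decomposition in $|\va|$ instead tracks the $(\va,t)$-measure of $S_R^c$ exactly at each scale — the slice radius $r_t\sim R\lan \de^2t\ran/(\de|t|)$ couples $|\va|\sim A_j$ to $|t|\sim |\vu|/A_j$ and gives measure $\lappr R^dA_j^{d-1}|\vu|^{1-d/2}$ — and the trivial amplitude bound $|I_{\va,t}|\lappr_m \|w\|_{0,K}\lan\va\ran^{4-K}$ then makes the dyadic sum converge as soon as $K>d+3$. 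This removes the $|\vu|^{\beta}$ loss entirely and drastically weakens the decay needed on $w$, so you actually prove a strictly stronger statement than the lemma claims; since $K(d,\beta)>d+3$ always, the lemma as written follows a fortiori. Two small remarks: your closing worry about the transition $A_j\sim|\vc|$ is unfounded — the dyadic shells in $|\va|$ slice $|t|$ into consecutive intervals and $|p(t)|\lappr 1$ is available uniformly, so no delicate alignment is needed (the polynomial decay of $p$ is only a bonus). Also, the bound $\lan\va+\de\vb\ran\asymp\lan\va\ran$ needs $|\vu|>1$ (so $\de<1$), which holds because $|\vu|\ge L/q>\alpha^{-1}$; it is worth stating that explicitly.
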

 
 {\it Proof.}
 On ${S_R}^c$ we use for $I_{\va, t}$ the easy upper bound
 \begin{equation}\lbl{I(a,t)<}
 |I_{\va, t}|\lappr\max_{|b_i|\le 1\,\forall i}|\tilde w(\de \vb +\va)|\ \leq \|\tilde w\|_{0,0}.  
 \end{equation}
 The fact that $(\va,t)\in {S_R}^c$ implies that the integration in 
  $d \va$ for a  fixed $t$ is restricted to the region, where
$
 \left|A\va - t^{-1} {\vu} \right| \le  ({R}/{\de |t|}) \lan  t / |\vu| \ran \,,
 $
 or
  \begin{equation}\label{bound_int_a}
 \left|\va -\frac{A^{-1}\vu}{t}\right|\le  \|A^{-1}  \| \, \frac{R}{\de |t|} \, \lan  t / |\vu| \ran \,.
 \end{equation}
 We first consider  the case $|t|\ge |\vu|^{1-\beta/d}$. Since $|\vu| >1$, then considering separately the cases $|t| \le |\vu|$ and 
  $|t| \ge |\vu|$ we see that 
 \be\label{grr}
 \frac{R}{\de
 	|t|}\lan  t / |\vu| \ran \le 
	 R|\vu|^{-1/2+\beta/d}\,.
\ee
 In view of  \eqref{I(a,t)<} -\eqref{grr},
 $$
 \left|\int_{\R^{d}} |I_{\va,t}|{\bf 1}_{{S_R}^c}(\va,t)d\va\right|\lappr
 {R^{d}}|\vu|^{-d/2+\beta} \|\tilde
 w\|_{0,0} \,.
 $$
 Since $|F^m(\z)|\lappr_m \lan \z \ran^2$, by definition \eqref{tilde_w_def} of the function $\tilde w$ we have $\|\tilde w\|_{0,0}\lappr_m \|w\|_{0,4}$. Then the r.h.s. above is $\lappr_m {R^{d}}|\vu|^{-d/2+\beta} \|  w\|_{0,4}$. Taking into account that, by  \eqref{intp(t)}, 
 $
 \ds{\int_{|t|\ge |\vu|^{1-\beta/d}} |p(t)|\,dt
  \lappr \frac{L}{q}\le|\vu|\,,}
 $
 we get
 \begin{equation}\label{t>u}
\int_{|t|\ge |\vu|^{1-\beta/d}}\left(\int_{\R^{d}} |p(t)| |I_{\va, t}|{\bf 1}_{{S_R}^c}(\va,t) 
  \,d\va \right) dt
 \lappr_{{m}}
 R^{d}|\vu|^{-d/2+1+\beta} \|  w\|_{0,4}\,.
 \end{equation}
 
 Now let  $|t| \le |\vu|^{1-\beta/d}$. Then  the r.h.s. of \eqref{bound_int_a}  is bounded by the quantity 
 $
  \|A^{-1}  \|  {R}/{(\de |t|)}, 
 $ 
  so that $|\va|\gtrsim |A^{-1}\vu|/|t|- \|A^{-1}  \|  {R}/{(\de |t|)}.$ Since $|A^{-1}\vu|\geq C_A|\vu|$ and $R\le  | \vu |^{1/3}$, then 
 $$
 |\va|\gtrsim_A
  \frac{|\vu|- R {  C'_A}\sqrt{|\vu|}}{|t|}\ge (1-{  C'_A}|\vu|^{-1/6})\frac{|\vu|}{|t|} \ge
 \frac12\frac{|\vu|}{|t|}\ge \frac 12 |\vu|^{\beta/d}\, 
 $$
 with $C_A'=C_A^{-1}\|A^{-1}\|$, since $|\vu|^{-1}\leq \alpha$, 
 if $\alpha$  is so small that $1-C'_A\alpha^{1/6}\geq 1/2$.
 Then $1\lappr |\va|/|\vu|^{\beta/d}$ on ${S_R}^c$, so
   that ${\bf 1}_{{S_R}^c}(\va,t) \lappr   |\vu|^{-d/2+\beta/d}  |\va|^{d^2/2\beta-1}$, and  we deduce from
 \eqref{I(a,t)<} that for such values of $t$
 \begin{equation*}
 \begin{split}
 \left|\int_{\R^d} |I_{\va,t}| {\bf 1}_{{S_R}^c}(\va,t) d\va\right|&\lappr |\vu|^{-d/2+\beta/d}
 \int_{\R^{d}} |\va|^{d^2/2\beta-1} \max_{|b_i|\le 1\,\forall i}|\tilde w(\de \vb +\va)|\, d\va
 \\
 &\lappr_{{m} } |\vu|^{-d/2+\beta/d} \|w\|_{0,K(d,\beta)}\,, 
 \end{split}
 \end{equation*}
 where $K(d,\beta)=d+\lceil d^2/2\beta\rceil+4$.
 On the other hand, by \eqref{p(t)<} with $M=0$, 
 $
 \int_{|t|\le |\vu|^{1-\beta/d}}
  |p(t)|dt \lappr |\vu|^{1-\beta/d}\,,
 $
 from which we obtain
 \begin{equation}\label{t<u}
\int_{|t|\le |\vu|^{1-\beta/d}}\left(\int_{\R^{d}}|p(t)| |I_{\va, t}| {\bf 1}_{{S_R}^c}(\va,t)
  \,d\va\right)dt
  \lappr_{{m}}
 |\vu|^{-d/2+1} \| w\|_{0,K(d,\beta)}\,. 
 \end{equation}
 
 Putting together \eqref{t>u} and \eqref{t<u}  we get the assertion. 
 \qed

\subsubsection{End of the proof }  
 In order to complete the proof of Lemma~\ref{l:22} we combine 
 Lemmas~\ref{l:good_set} and \ref{l:bad_set} to get that 
 $$
 |\tilde I_q(\vc)|\lappr_{N{ ,m}} \left(\frac Lq R^{-N} +R^{d} |\vu|^{-d/2+1+\beta}\right)\left(
 \|w\|_{N,d+5}+\|w\|_{0,K(d,\beta)}\right)\,.
 $$
 We fix here {  $\gamma_1\in (0,1/2)$}, $\beta = {\gamma_1}/2$, $R= |\vu|^{\frac{{\gamma_1}}{2d}}\le|\vu|^{\frac{1}{3}}$ 
 and pick $N= \lceil\tfrac{d^2}{{\gamma_1}}\rceil-2d>0$  (notice that $R \ge \alpha^{-\gamma_1/2d}
 \ge 2\|A\|\sqrt{d}$ if
 $\alpha$ is small enough, so that assumption of Lemma~\ref{l:good_set} is satisfied). 
 Then $K(d,\beta)=N+3d+4$, $R^{-N}\leq|\vu|^{-d/2+\gamma_1}\leq  |\vc| \,(L|\vc|/q)^{-d/2+\gamma_1}  $  since  $|\vc|\ge 1$. Moreover, $R^d |\vu|^{-d/2+1+\beta}=|\vu|^{-d/2+1+\gamma_1} =   (L  |\vc|/q)^{-d/2+1+\gamma_1}$. 
 This concludes the proof.
 \qed

\section{Integrals over quadrics}\label{sec:7}
Our goal in this section is to study integrals $\II(t;w)$ over the quadrics $\Sigma_t$. We start with a case
of  quadratic forms $F$, written in a convenient normal form (Theorem~\ref{t_ap1}),  and 
 show later in  Section~\ref{s_7.4}
 (Theorem~\ref{c_integrals})   how to reduce general integrals $\II(t;w)$ to those, corresponding 
to the quadratic  forms like that. In this section we assume that
$$
d\ge3
$$
and  not use the bold font to denote vectors since most 
 of variables we use are vectors.

\subsection{Quadratic forms in normal form}
On
 $\R^{d}=\R^n_u\times \R_x^{d_1}\times \R_y^{d_1}= \{z=(u,x,y)\}$,
where $d\ge3$,  $n\ge 0$ and ${d_1}\ge 1$, consider the
quadratic form
\begin{equation}\label{p-1}
    F(z)=\tfrac12|u|^2+x\cdot y =\tfrac12 Az\cdot z\,,\quad
    A(u,x,y)=(u,y,x)\,.
\end{equation}
Note that $A$ is an orthogonal operator, $|Az|=|z|$. As in
Section~\ref{s_1.1} we define the quadrics $\Sigma_t=\{z:F(z)=t\}$,
$t\in \R$. Note that for $t\neq 0$ $\Sigma_t$ is a smooth
hypersurface, while $\Sigma_0$ is a cone with a singularity at the
origin. We denote the volume element on $\Sigma_t$ (on
$\Sigma_0\backslash\{0\}$ if $t=0$), induced from $\R^{d}$, as
$dz\!\mid_{\Sigma_t}$ and set
\begin{equation}\label{p0}
  \mu^{\Sigma_t}(dz)=|Az|^{-1}dz\!\mid_{\Sigma_t}\,
\end{equation}
(see below concerning this measure when $t=0$). 

For a  $k_*\in \N\cup\{0\}$ and a function $f$ on $\R^{d}$ satisfying
\begin{equation}\label{p00}
f\in\cC^{k_*,M}(\R^d)\,, \quad  M>{d}\,,
\end{equation}
we will study the integrals
\be\lbl{integrals}
\II(t)=\II(t;f)= \int_{\Sigma_t} f(z)\mu^{\Sigma_t}(dz)\,.
\ee

Our first goal is to demonstrate the following result: 

\begin{theorem}\lbl{t_ap1}
  For the quadratic form  $F(z)$ as in \eqref{p-1} and  a function $f\in\cC^{k_*,M}(\R^d)$, $M>{d}$, consider  integral
  $\II(t;f)$, defined in \eqref{integrals}. Then  the function $\II(t)$, defined by \eqref{integrals}, 
  is $C^k$--smooth  if $k<  {d}/2-1$, $k\le k_*$, 
  and is  $C^k$--smooth outside zero if  $k\le  \min(  {d}/2-1,k_*)$.
  For  $ 0<|t|\le 1$ we have 
      \be\label{log}
      \begin{split}
      &\left| \p^k\II(t)\right|{ \lappr_{k,{ M}}}\|f\|_{k,{M}}\, \quad \text{if}\;\;  k< {d}/2-1,\\
     & \left| \p^k\II(t)\right|{ \lappr_{k,{M}}}\|f\|_{k,{M}}(1-\ln |t|) \quad \text{if}\;\;    k\le {d}/2-1.
      \end{split}
      \ee
      While for $|t|\ge 1$, denoting $ \kappa= \frac{M+2-{d}}2$, we have 
      \be\lbl{q66}
      \begin{split}
      \left| \p^k\II(t)\right| &{ \lappr_{k,M}}\|f\|_{k,M}\lan t\ran^{-\kappa}\, \quad 
     \; \text{if}\; 1\le  k\le  {d}/2-1,\;   k\le k_*,
     \\
       \left| \II(t)\right| &{ \lappr_{M,\kappa'}}\|f\|_{0,M}\lan t\ran^{-\kappa'}\, \quad \forall\, \kappa'<\kappa. 
      \end{split} 
      \ee
\end{theorem}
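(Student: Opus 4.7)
The plan is to exploit the symmetries of the normal form \eqref{p-1} to reduce $\II(t)$ to a one-dimensional parametric integral, then analyze its smoothness at $t=0$ and decay at infinity by differentiation under the integral sign. First, apply the orthogonal substitution $(x,y)\mapsto(v,w)=((x+y)/\sqrt2,(x-y)/\sqrt2)$ and write $U=(u,v)\in\R^{n+d_1}$, so that $F=\tfrac12(|U|^2-|w|^2)$. Pass to polar coordinates $U=R\omega$, $\omega\in S^{n+d_1-1}$, and $w=r\zeta$, $\zeta\in S^{d_1-1}$; the constraint $R^2-r^2=2t$ parametrizes $\Sigma_t$. Since $|\nabla F|=|Az|=\sqrt{R^2+r^2}$, the co-area formula yields
\[
\II(t)=\int_{r_*(t)}^\infty r^{d_1-1}(r^2+2t)^{(n+d_1-2)/2}A\bigl(r,\sqrt{r^2+2t}\bigr)\,dr,
\]
where $r_*(t)=\sqrt{\max(0,-2t)}$ and $A(r,R):=\int_{S^{n+d_1-1}}\int_{S^{d_1-1}}f(R\omega,r\zeta)\,d\omega\,d\zeta$. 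Standard properties of spherical averages make $A$ a $C^{k_*}$ function of $(r^2,R^2)$ with norms controlled by $\|f\|_{k_*,M}$, and the decay of $f$ is inherited by $A$ in $r$ and $R$.

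For the smoothness assertion near $t=0$, fix a cutoff $\chi\in C_c^\infty([0,2])$ equal to $1$ on $[0,1]$ and split $\II=\II_1+\II_2$ via $\chi(r)$. The tail $\II_2$ is bounded away from the cone vertex, so its $t$-derivatives up to order $k_*$ follow by direct differentiation under the integral sign and are controlled by $\|f\|_{k,M}$. For $\II_1$ (and $t\ge 0$), switch to the variable $\sigma=r^2$ to get $t$-independent integration limits:
\[
\II_1(t)=\tfrac12\int_0^\infty\sigma^{(d_1-2)/2}(\sigma+2t)^{(n+d_1-2)/2}\widetilde B(\sigma,\sigma+2t)\,d\sigma,
\]
with $\widetilde B\in C^{k_*}$, compactly supported in its first argument. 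The $k$-th $t$-derivative is dominated by $\int_0^\infty\sigma^{(d_1-2)/2}(\sigma+2t)^{(n+d_1-2)/2-k}|\widetilde B|\,d\sigma$. Splitting the integration at $\sigma=2t$, the inner piece contributes $O(t^{(d-2)/2-k})$, while the outer piece, whose integrand behaves as $\sigma^{(d-4)/2-k}$, is uniformly bounded in $t$ when $k<d/2-1$, giving the first bound in \eqref{log}; at the borderline integer $k=d/2-1$ (possible only for even $d$) the tail integral $\int_{2t}^\infty\sigma^{-1}d\sigma$ yields the logarithmic loss in the second bound. The case $t<0$ is treated analogously by using $\tau=R^2$ as the integration variable, whose range is $[0,\infty)$ independently of $t$.

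For the decay bounds \eqref{q66}, rescale $z=\sqrt{|t|}\,\tilde z$, which maps $\Sigma_t$ onto $\Sigma_{\mathrm{sgn}(t)}$ and multiplies the measure $\mu^{\Sigma_t}$ by $|t|^{(d-2)/2}$. Since $|\tilde z|\ge\sqrt2$ on $\Sigma_{\pm 1}$, the decay hypothesis yields $|f(\sqrt{|t|}\tilde z)|\le\|f\|_{0,M}|t|^{-M/2}|\tilde z|^{-M}$, and combining with the volume factor gives $|\II(t)|\lappr\|f\|_{0,M}|t|^{-\kappa}$ with $\kappa=(M+2-d)/2$. The slight loss $\kappa\to\kappa'<\kappa$ for $k=0$ reflects a borderline logarithmic divergence in the reference integral $\int_{\Sigma_{\pm 1}}|\tilde z|^{-M}\mu$; for $k\ge 1$ the $t$-derivative contributes an extra $|t|^{-1}$ times a bounded functional of the rescaled integrand, restoring the sharp exponent. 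The main technical obstacle is the second step: tracking how repeated $t$-differentiation interacts with the algebraic factor $\sigma^{(d_1-2)/2}(\sigma+2t)^{(n+d_1-2)/2}$ and the profile $\widetilde B$, verifying that the endpoint singularity at the critical order $k=d/2-1$ is exactly logarithmic (and not worse), and that $\partial^k\II$ matches continuously across $t=0$ for $k<d/2-1$ via a dominated-convergence argument.
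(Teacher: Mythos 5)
Your proposal takes a genuinely different route from the paper's proof of Theorem~\ref{t_ap1}, and it is essentially sound. The paper keeps the normal form $F=\tfrac12|u|^2+x\cdot y$, disintegrates $\mu^{\Sigma_t}$ along the fibration $\Pi_t^x:(u,x,y)\mapsto(u,x)$, reduces all quadrics to the fixed cone $\Sigma_0$ via the affine maps $L_t$, and then works with a three-way cutoff $f=f_{00}+f_{10}+f_{11}$, with a further $t$-dependent split of $f_{00}$ into $f_{00<}$ and $f_{00>}$; each piece is estimated separately. You diagonalize $F$ orthogonally to $\tfrac12(|U|^2-|w|^2)$, pass to bi-polar coordinates, and reduce everything to a one-dimensional parametric integral in $r=|w|$ with the angular integrations absorbed into the spherical average $A(r,R)$; the decay bounds \eqref{q66} then drop out of the homogeneity $z=\sqrt{|t|}\,\tilde z$. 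This is more transparent than the paper's case analysis, and the scaling argument for $|t|\ge1$ in particular is considerably cleaner than the paper's handling of $\II_{11},\II_{10}$.

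Two points in the write-up need repair, though neither is fatal to the approach.
\emph{(i)} The claim that $A$ (equivalently $\widetilde B$) is ``a $C^{k_*}$ function of $(r^2,R^2)$ with norms controlled by $\|f\|_{k_*,M}$'' is not literally true: the spherical average of a $C^{k_*}$ function is $C^{k_*}$ and even in $(r,R)$, but as a function of $(\sigma,\tau)=(r^2,R^2)$ it has only roughly half that regularity near $\sigma=0$ or $\tau=0$. What saves the argument is that the chain-rule factors which threaten to eat derivatives come with compensating positive powers of $R$: with $m:=n+d_1-2\ge0$ one has, at fixed $r$ and $\sigma$,
$$
\p_t^k\Bigl[R^{m}A(r,R)\Bigr]\Big|_{R=\sqrt{\sigma+2t}}
 = \sum_{l=0}^{k} c_{k,l}\,R^{\,m-2k+l}\,\p_R^{l}A(r,R)\,,
$$
so only $\|A\|_{C^{k}}\lappr\|f\|_{k,M}$ enters, the most singular factor is $R^{m-2k}$ exactly as your dominated bound assumes, and no hidden factor-of-two loss in $k_*$ occurs. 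You should replace the incorrect regularity claim by this explicit computation (and run the symmetric computation in $\tau=R^2$ for $t<0$, which also eliminates the moving lower limit $r_*(t)$, as you indicate).
\emph{(ii)} The discussion of the $k=0$ decay bound is internally inconsistent. Your scaling identity $\II(t)=|t|^{(d-2)/2}\int_{\Sigma_{\pm1}}f(\sqrt{|t|}\,\tilde z)\,\mu^{\Sigma_{\pm1}}(d\tilde z)$ together with $|\tilde z|\ge\sqrt2$ on $\Sigma_{\pm1}$ and the absolute convergence of $\int_{\Sigma_{\pm1}}|\tilde z|^{-M}\mu^{\Sigma_{\pm1}}(d\tilde z)$ --- valid for every $M>d-2$, and in particular for $M>d$ --- already yields the exponent $\kappa=(M+2-d)/2$ at $k=0$, with no logarithmic divergence. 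The subsequent sentence attributing the relaxation $\kappa\to\kappa'<\kappa$ to such a divergence should simply be deleted: your method gives a marginally stronger bound than \eqref{q66} at $k=0$ (the $\kappa'<\kappa$ in the theorem reflects a lossy step in the paper's own estimate of $\II_{11}$ when $d_1=1$, not a genuine obstruction).
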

An example, see \cite[Example A.3]{arXiv}, shows that in general  the log-factor cannot be removed from the r.h.s. in \eqref{log}.

The theorem is proved below in number of steps. 
In the proof for a given vector $x\in \R^{d_1}$ we consider its orthogonal complement in $\R^{d_1}$ -- the hyperspace $x^\perp$. We denote its elements $\bar x$,  and provide $x^\perp$  with the Lebesgue measure  $d\bar x$. 
If $d_1=1$, then    $x^\perp$ degenerates to the space $\R^0= \{0\}$, and 
$d\bar x$ -- to the $\delta$-measure at $0$. Practically it means that when $d_1=1$, the spaces $x^\perp$ and $y^\perp$ (and integrals over them) 
disappear from our construction. It makes the case $d_1=1$ easier, but notationally different from $d_1\ge2$. For example, in formula \eqref{sigm^x} 
with $d_1=1$ the affine space $\sigma_t^x(u', x')$ becomes the point $(u', x', (t- \frac12 |u'|^2) |x'|^{-2} x')$, the measure $d\mu^{\Sigma_t}\!\mid_{\Sigma_t^x}$
in \eqref{p05} becomes $du\, |x|^{-1} dx$, etc. Accordingly, below we write the proof only for  $d_1\ge2$, leaving the case $d_1=1$ as an easy 
exercise for the  reader.

\subsection{Disintegration of the two measures}
Our goal in this subsection is to find a convenient disintegration of
the measures $dz\!\mid_{\Sigma_t}$ and $\mu^{\Sigma_t}$, following the
proof of Theorem 3.6 in \cite{DK1}.

Recall that we write
elements $z\in\R^d$ as $z=(u,x,y)$,  where $u\in\R^d$ and $x,y\in \R^{d_1}$. 
Let us denote $\Sigma_t^x= \{(u,x,y)\in \Sigma_t: x\neq 0\}$ (if
$t<0$, then $\Sigma_t^x=\Sigma_t$). Then for any $t$  \ $\Sigma_t^x$ is a smooth hypersurface in $\R^d$, and 
the mapping
\begin{equation}\label{p2}
\Pi_t^x:\Sigma_t^x\to\R^n\times \R^{d_1}\backslash\{0\}\,,\quad
(u,x,y)\mapsto  (u,x)\,,
\end{equation}
is a smooth affine euclidean vector bundle. Its fibers are
\be\label{sigm^x}
\sigma^x_t(u',x'):=\big(\Pi_t^x\big)^{-1}(u',x') = \Big(  u', x',
      {x'}^\perp + \frac{t-\tfrac12|u'|^2}{|x'|^2} x'\Big)\,,
\ee
where ${x'}^\perp$ is the orthogonal complement to $x'$ in
$\R^{d_1}$. For any $x'\neq 0$ denote
$$
U_{x'}= \big\{x:|x-x'|\le \frac12 |x'|\big\}\,,\quad U = \R^n\times
U_{x'} \times \R^{d_1}\,. 
$$

Now we construct a trivialisation of the bundle $\Pi_t^x$ over $U$. To do this we
fix in $\R^{d_1}$ any orthonormal frame $(e_1,\ldots,e_{d_1})$ such that
the ray $\R_+ e_1$ intersects $U_{x'}$. Then
$$
x_1>0 \quad \forall x= (x_1,\ldots,x_{d_1})=:(x_1,\bar x) \in
U_{x'}\,.
$$
We wish to  construct an affine in the third argument diffeomorphism 
$$
\Phi_t: \R^n\times U_{x'}\times \R^{{d_1}-1}\to U\cap \Sigma_t
$$
of the form
\begin{equation}\label{p3}
\Phi_t(u,x, \bar \eta) = (u,x,\Phi_t^{u,x}(\bar \eta))\,, \quad
\Phi_t^{u,x}(\bar \eta) = (\varphi_t(u,x, \bar \eta),\bar \eta) \in \R^{d_1} 
\,, \quad \bar\eta\in \R^{d_1-1}. 
\end{equation}
We easily see that $\Phi_t(u,x,\bar \eta)\in \Sigma_t$ if and only if
\begin{equation}\label{p4}
  \varphi_t(u,x,\bar \eta) = \frac{t-\tfrac12|u|^2-\bar x\cdot\bar
    \eta}{x_1}\,.
\end{equation}
The  mapping $\bar\eta\to\Phi_t^{u,x}(\bar\eta)$ with this function $ \varphi_t$ 
 is affine,  and the range of $\Phi_t$  equals $U\cap \Sigma_t$.

In the coordinates $(u,x,\eta_1,\bar\eta)\in \R^n\times U_{x'}\times \R\times \R^{{d_1}-1}$ on the domain 
$U \subset \R^{d}$ 
the hypersurface $\Sigma_t^x$ is embedded in
$\R^{d}$ as a graph of the function $(u,x,\bar \eta)\mapsto \eta_1 =
\varphi_t$. Accordingly, in the coordinates $(u,x,\bar\eta)$ on $U\cap
\Sigma_t$ the volume element on $\Sigma_t$ reads as $\bar
\rho_t(u,x,\bar \eta)du\,dx\,d\bar \eta$, where
\begin{equation*}
  \begin{split}
\bar \rho_t  = \left(1+|\nabla \varphi_t|^2\right)^{1/2}
= \Big(1+\frac{|u|^2+|\bar \eta|^2+|\bar x|^2 + x_1^{-2}(t-\tfrac12
  |u|^2 -\bar x\cdot \bar \eta)^2}{x_1^2}\Big)^{1/2}\!.
\end{split}
\end{equation*}
Passing from the variable $\bar \eta\in \R^{{d_1}-1}$ to
$y=\Phi_t^{u,x}(\bar \eta) \in \sigma_t^x(u,x)$ we replace $d\bar
\eta$ by $|\det\Phi_t^{u,x}(\bar \eta)|d_{\sigma_t^x(u,x)}y$. Here
$d_{\sigma_t^x(u,x)}y$ is the Lebesgue measure on the $(d_1-1)$-dimensional affine euclidean 
space  $\sigma_t^x(u,x)$  while  by $\det\Phi_t^{u,x}$ we denote the determinant of the linear mapping  $\Phi_t^{u,x}$, viewed as a linear isomorphism of the euclidean space $\R^{d_1-1} = \{\bar\eta\}$ and the tangent space to  $\sigma_t^x(u,x)$,  identified 
with the euclidean space $x^\perp\subset \R^{d_1}$. 
Accordingly we write the volume element on $\Sigma_t \cap U$ as 
$
\rho_t(u,x,y) du\,dx\,d_{\sigma_t^x(u,x)}y 
$
with
$$
 \rho_t(u,x,y) = \bar
\rho_t(u,x,\bar \eta)  |\det\Phi_t^{u,x}(\bar \eta)|\,, \quad (u,x,y) \in \Sigma_t,\; \text{where}\;
\Phi_t^{u,x}(\bar \eta)=y. 
$$

Now we will calculate the density $\rho_t$. Let us take any point
$z_*=(u_*,x_*, y_*)\in U\cap \Sigma_t $ and choose a frame $(e_1,\ldots,e_{d_1})$ such
that $e_1=x_*/|x_*|$. Then
$$
x_*= (|x_*|,0)\,, \;\; y_*= \big(y_{*1}, \bar y_*\big)\,, 
\qquad y_{*1}= \Big(\frac{t-\tfrac12|u_*|^2}{|x_*|} \Big), \;\;
 \bar y_*\in \R^{{d_1}-1}\,.
$$
So (see~\eqref{p3}--\eqref{p4}) the mapping $\Phi_t$ is such that
$
\Phi_t^{u_*,x_*}(\bar \eta) = \big(y_{*1}, 
\bar \eta\big)=\tilde y\in\sigma_t^x(u_*,x_*)
$
(i.e. $\varphi_t(z_*) = y_{*1}$). 
In these coordinates 
$ \rho_t(u_*,x_*,y_{*1}, \bar y_*)=\bar \rho_t(u_*,x_*, \bar y_*) $,
which equals 
\begin{equation*}
  \begin{split}
\left(1+|x_*|^{-2}\left(|u_*|^2+|\bar y_*|^2
+|y_{*1}|^2\right)\right)^{1/2}
&=\frac{\left(|x_*|^2 +|u_*|^2+|\bar y_*|^2
+|y_{*1}|^2\right)^{1/2}}{|x_*|}\,.
  \end{split}
\end{equation*}
That is, $\rho_t(z_*) = \tfrac{|z_*|}{|x_*|}$. Since $z_*$ is any point in $U\cap \Sigma_t$, then 
 we have proved
\begin{proposition}\label{p_1}
  The volume element $dz\!\mid_{\Sigma_t^x}$ with respect to the
  projection $\Pi_t^x$ disintegrates as follows:
  \begin{equation}\label{p5}
    dz\!\mid_{\Sigma_t^x} = du\,|x|^{-1}dx\,|z|d_{\sigma_t^x(u,x)}y\,.
  \end{equation}
  That is, for any function $f\in C_0^0(\Sigma_t^x)$,
  $$
\int f(z) dz\!\mid_{\Sigma_t^x}= \int_{\R^n}\int_{\R^{d_1}}
|x|^{-1}\Big(\int_{\sigma_t^x(u,x)} |z|f(z) \, d_{\sigma_t^x(u,x)}y\, \Big)dx\, du\,.
  $$
\end{proposition}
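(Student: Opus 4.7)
The plan is to derive the disintegration \eqref{p5} by parametrizing $\Sigma_t^x$ locally as a graph, applying the standard surface area formula, and then identifying the resulting density with $|z|/|x|$ by a judicious choice of frame. The statement is pointwise, so it suffices to evaluate the density at an arbitrary base point $z_* = (u_*, x_*, y_*)\in\Sigma_t^x$, where we are free to choose coordinates adapted to $z_*$.

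First I would work in the local chart $U\cap\Sigma_t$, with an orthonormal frame $(e_1,\ldots,e_{d_1})$ of $\R^{d_1}$ chosen so that $x_1 \ge \kappa > 0$ on $U_{x'}$. By \eqref{p3}--\eqref{p4}, $\Sigma_t^x$ is then the graph $\eta_1 = \varphi_t(u,x,\bar\eta)$, so the standard graph formula for the induced volume element gives
\[
dz|_{\Sigma_t^x} = \bar\rho_t(u,x,\bar\eta)\, du\, dx\, d\bar\eta,
\qquad
\bar\rho_t = \sqrt{1+|\nabla \varphi_t|^2}.
\]
Changing the $\bar\eta$-variable to $y = \Phi_t^{u,x}(\bar\eta)\in\sigma_t^x(u,x)$ via the affine bundle structure introduces a Jacobian factor $|\det\Phi_t^{u,x}|$ relating $d\bar\eta$ and $d_{\sigma_t^x(u,x)}y$, producing a density $\rho_t(u,x,y) = \bar\rho_t\cdot|\det \Phi_t^{u,x}|$.

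To identify $\rho_t$ with $|z|/|x|$, I would pick the adapted frame $e_1 = x_*/|x_*|$, so that $\bar x_* = 0$ and $x_{*1}=|x_*|$. In this frame the linear part of $\Phi_t^{u_*,x_*}$ reduces to the isometry $\bar\eta \mapsto (0,\bar\eta)$ from $\R^{d_1-1}$ onto $x_*^\perp$, whence $|\det \Phi_t^{u_*,x_*}|=1$. A direct differentiation of \eqref{p4} at $z_*$ (using $\bar x_* = 0$ and $\varphi_t(z_*) = y_{*1} = (t - \tfrac12|u_*|^2)/|x_*|$) yields
\[
1+|\nabla\varphi_t|^2 = \frac{|x_*|^2 + |u_*|^2 + y_{*1}^2 + |\bar y_*|^2}{|x_*|^2} = \frac{|z_*|^2}{|x_*|^2},
\]
so that $\rho_t(z_*) = |z_*|/|x_*|$. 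Since $|z|$ and $|x|$ are frame-independent, this equality persists at every point of $\Sigma_t^x$, which is \eqref{p5}.

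The main bookkeeping issue is to ensure that the area factor $\bar\rho_t$ and the Jacobian $|\det \Phi_t^{u,x}|$ combine into the clean expression $|z|/|x|$; aligning $e_1$ with $x_*$ is exactly what makes both pieces transparent, since it simultaneously kills the $\bar\eta$-dependence of $\varphi_t$ and turns $\Phi_t^{u_*,x_*}$ into an isometry in the $\bar\eta$-direction. No estimates are needed beyond this algebraic simplification.
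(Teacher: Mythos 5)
Your proposal is correct and follows essentially the same route as the paper: graph parametrization via $\varphi_t$, the surface-area density $\bar\rho_t=(1+|\nabla\varphi_t|^2)^{1/2}$ times the Jacobian $|\det\Phi_t^{u,x}|$, and evaluation at an arbitrary $z_*$ in the adapted frame $e_1=x_*/|x_*|$, where the Jacobian becomes $1$ and the density reduces to $|z_*|/|x_*|$. The paper's proof is identical in substance, merely leaving the isometry observation about $\Phi_t^{u_*,x_*}$ implicit.
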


Similarly, if we set $\Sigma_t^y= \{(u,x,y)\in \Sigma_t: y\neq 0\}$
and consider the projection
$$
\Pi_t^y:\Sigma_t^y\to\R^n\times \R^{d_1}\backslash\{0\}\,,\quad
(u,x,y)\mapsto  (u,y)\,,
$$
then
\begin{equation}\label{p55}
  dz\!\mid_{\Sigma_t^y} = du\,|y|^{-1}dy\,|z|d_{\sigma_t^y(u,y)}x\,.
\end{equation}

Let us denote $\Sigma_t^0= \{(u,x,y)\in \Sigma_t: x=y= 0\}$. Then
$\Sigma_t\backslash \Sigma_t^0$ is a smooth manifold and
$dz\!\mid_{\Sigma_t}$ defines on it a smooth measure.

By \eqref{p5} and \eqref{p55}, for any $t$   the volume of the set
$\{z\in\Sigma_t\backslash \Sigma_t^0 : 0 < |x|^2+|y|^2\le\eps\}$ goes
to zero with $\eps$. So  assigning to $\Sigma_t^0$ zero measure we
extend $dz\!\mid_{\Sigma_t}$ to a Borel measure on $\Sigma_t$ such that
each set $\{z\in \Sigma_t: |z|\le R\}$ has a finite measure and
\begin{equation}\label{p555}
  \left(dz\!\mid_{\Sigma_t}\right)\left(\left(\Sigma_t^x\cup \Sigma_t^y\right)^c\right) = 0\,.
\end{equation}

By \eqref{p5} and \eqref{p55}  function
$|z|^{-1}$ is locally integrable $\Sigma_t$ with respect to the measure 
$dz\!\mid_{\Sigma_t}$. So $\mu^{\Sigma_t}$ (see \eqref{p0}) is a well
defined Borel measure on $\Sigma_t$. Since $|Az|=|z|$, then, in view of
\eqref{p5} and \eqref{p55},
\be\label{p05}
d\mu^{\Sigma_t}\!\mid_{\Sigma_t^x}=
du\,|x|^{-1}dx\,d_{\sigma_t^x(u,x)}y\,,\quad 
d\mu^{\Sigma_t}\!\mid_{\Sigma_t^y}=
du\,|y|^{-1}dy\,d_{\sigma_t^y(u,y)}x\,.
\ee

The measure $\mu^{\Sigma_t}$ defines on $\R^{d}$  a Borel measure,  supported by $\Sigma_t$. It will also be denoted  $\mu^{\Sigma_t}$.

\subsection{Analysis of the integral $\II(t;f)$}
Note that for any $t$ the mapping
$$
L_t : \Sigma_0^x\to \Sigma_t^x\,,\quad (u,x,y)\mapsto (u,x,y+t|x|^{-2}x)
$$
defines an affine isomorphism of the bundles
$\Pi_0\!\mid_{\Sigma_0^x}$ and $\Pi_t\!\mid_{\Sigma_t^x}$. Since $L_t$
preserves the Lebesgue measure on the fibers, then in view of
\eqref{p5} it  sends the measure $\mu^{\Sigma_0}$ to
$\mu^{\Sigma_t}$. Using \eqref{p05} we get that for any $t$ 
 the integral $ \II(t)$, defined in \eqref{integrals}, may be written as 
\begin{equation}\label{p6}
  \begin{split}
    \II(t;f)&\int_{\Sigma_0}f(L_t(z))\mu^{\Sigma_0}(dz)\\
    &=\int_{\R^n\times \R^{d_1}} |x|^{-1}\Big(\int_{\sigma(u,x)} f(u,x,
    y+t|x|^{-2}x )d_{\sigma^x(u,x)}y \Big)du\,dx\,.
  \end{split}
\end{equation}
Here
$
\sigma(u,x):=\sigma_0^x(u,x)=x^\perp-\frac12|u|^2|x|^{-2}x\,.
$

We recall that $f(u,x,y)$ satisfies \eqref{p00}. Taking any smooth function
$\varphi(t) \ge 0$ on $\R$ which vanishes for $|t|\ge 2$ and equals
one for $|t|\le 1$ we write
$
f=f_{00}+f_1$, where  $ f_{00} = \varphi(|(x,y)|^2)f$
and 
$ f_{1} =(1- \varphi(|(x,y)|^2))f$.
Denoting $B_r(\R^m)= \{\xi\in \R^m: |\xi|\le r\}$ and  $B^r(\R^m)=
\{\xi\in \R^m: |\xi|\ge r\}$ we see that
\begin{equation}\label{p7}
  \supp f_{00}\subset \R^n\times B_{\sqrt2}(\R^{2{d_1}})\,, \quad  \supp
  f_1\subset \R^n\times B^1(\R^{2{d_1}})\,.
\end{equation}
Setting next $f_{11}(z) = f_1(z)(1-\varphi(4|x|^2))$, $f_{10}(z)=
f_1 (z)\varphi(4|x|^2)$ we write 
$$
f= f_{00}+f_{11} +f_{10}\,.
$$
Since $(x,y)\in B^1(\R^{2{d_1}})$ implies that $|x|\ge 1/\sqrt{2}$ or
$|y|\ge 1/\sqrt{2}$, then in view of \eqref{p7},
\begin{equation}\lbl{p8}
  \begin{split}
  \supp f_{11}&\subset \R^n\times B^{1/2}(\R^{d_1}_x)\times \R^{d_1}_y\,,\\   \supp
  f_{10}&\subset \R^n\times \R^{d_1}_x\times B^{1/\sqrt2}(\R^{d_1}_y)\,.
  \end{split}
\end{equation}
Obviously, for  $i,j=0,1$ we have  
$\|f_{ij}\|_{k,m}\le C_{k,m} \|f\|_{k,m}$, for all $k\leq k_*$, $m\leq M$. 

Setting $\II_{ij}(t)= \II(t;f_{ij})$ we have: 
$$
\II(t;f)= \II_{00}(t)+\II_{10}(t) +\II_{11}(t)\,.
$$

\subsubsection{Integral $\II_{00}(t)$.} \label{s_7.3.1} 
By \eqref{p6} $\II_{00}(t)$ is a 
continuous function, and for $1\le k\le k_*$,
\begin{equation}\label{p9}
  \begin{split}
 & \partial^k \II_{00}(t) = \int_{\R^n}\Big(\int_{B_{\sqrt2}(\R^{d_1})}
  |x|^{-1} dx \Big) du \\
  &\qquad\int_{y\in \sigma(u,x)}
  \big({d^k}/{dt^k}\big)
  f_{00}(u,x,y+t|x|^{-2} x) \, d_{\sigma(u,x)}y\\
  &=\int_{\R^n}\!\int_{B_{\sqrt2}(\R^{d_1})}\!\!
  |x|^{-1}  \Big( \int_{y\in \sigma(u,x)}\!\!
  d_y^k
  f_{00}(u,x,y+t|x|^{-2} x) \left[|x|^{-2}x\right] \, d_{\sigma(u,x)}y \Big)dx du ,
  \end{split}
\end{equation}
 where by $d_y^k
	f_{00} \left[|x|^{-2}x\right]$ we denote the action of the differential $d_y^k f_{00}$ on the set of $k$ vectors, each of which equals to $|x|^{-2}x$.
Setting $\tau=t-\frac12|u|^2$,  for $y\in\sigma(u,x)$ we have
\be\lbl{y_in_sigma}
y+t|x|^{-2} x = \bar y+\tau|x|^{-2}x, \qquad \mbox{for
    some }\bar y\in x^\perp.
\ee
Then we write the integral over $y$ in \eqref{p9} as 
\be\lbl{p99}
\int_{x^\perp} d_y^k f_{00}(u,x,\bar y+\tau|x|^{-2}x) \left[|x|^{-2}x\right] 
\,d\bar y.
\ee
Since $|\bar y+\tau x|x|^{-2}|^2=|\bar y|^2 + \tau^2|x|^{-2}$, then on
the support of the integrand 
\be\lbl{p10}
|x|\leq \sqrt2, \qquad |\bar y|^2+ \tau^2 |x|^{-2}  \leq 2.
\ee
In particular, 
\be\lbl{p11}
|\tau|=\big|t-\frac12|u|^2\big|\leq \sqrt 2 |x|\le 2 \qmb{in \eqref{p99}}.
\ee
By \eqref{p7} the diameter of the domain of integration in \eqref{p99}
is bounded by $\sqrt2$. So, for any $m\geq 0$ integral \eqref{p99} is
bounded by $C_{k,m}  |x|^{-k}\lan u\ran^{-m}\|f\|_{k,m}$.
 Denoting $R=|u|$, $r=|x|$ we get that
\be\lbl{p12}
|\p^k \II_{00}(t)| \lappr_{k,M} \|f\|_{k,M} 
\int_{0}^{\sqrt2} r^{{d_1}-k-2} \Big(\int_0^\infty R^{n-1}\lan
R\ran^{-M}\chi_{|\tau|\le \sqrt 2 r} \,  dR\Big) dr  \,.
\ee
If $n=0$, then the integral over $R$ should be removed from the r.h.s.
 Below we estimate the integral $\p^k \II_{00}(t)$ separately for the cases $n=0$ and $n\ge 1$.

a) If $n=0$, then $\tau=t$, we get from \eqref{p11} that $|x|\geq
t/\sqrt2 $ and see from \eqref{p7} that, for $t\ne 0$, $\II_{00}(t)$
is $C^{k_*}$-smooth (since $f\in C^{k_*}$).  Then from \eqref{p12}
  we obtain
\be\lbl{p122}
|\p^k \II_{00}(t)| \lappr_{k,M} \|f\|_{k,M} 
\int_{|t|/\sqrt2}^{\sqrt2}r^{{d_1}-k-2} \chi_{|t|\leq 2}\, dr   \,.
\ee
	From here it follows that
	\be\lbl{p13}
	\begin{split}
		|\p^k \II_{00}(t)| \lappr_{k}  \|f\|_{k,M} \qmb{if} \qu  k\leq  \min({d_1}-2, k_*),\\
		|\p^k \II_{00}(t)| \lappr_{k}  \|f\|_{k,M}\big(1+\big|\ln |t|\big|\big) \qmb{if} \qu
		k\leq  \min({d_1}-1, k_*)\ ,
	\end{split}
	\ee
	while $\II_{00}(t)=0$ for $|t|\geq 2$.

b) If $n\geq 1$, then to estimate $\p^k \II_{00}(t)$ we split the integral for $ \II_{00}(t)$ in a sum of two. Namely, for a 
  fixed $t\neq 0$ we write  $f_{00} $  as 
    $f_{00} = f_{00<}+f_{00>}$, with   $f_{00<}=f_{00}\varphi(8|x|^2/t^2)$, where  $\varphi$ is the function, used to define
    the functions $f_{ij}$, $0\le i,j\le1$. 
     Then
  \be\label{supp}
  \supp f_{00<} \subset \{ 2|x| \le |t|\}, \quad  \supp f_{00>} \subset \{ 2\sqrt2 |x| \ge |t|\}.
  \ee
  With an obvious notation  we
  have $\II_{00}(t)=\II_{00<}(t)+\II_{00>}(t)$, where
  \begin{equation*}
    \begin{split}
  \II_{00<}(t) = & \int_{\R^n}\int_{B_{\sqrt2}(\R^{d_1})\cap B_{|t|/2}(\R^{d_1})}
  |x|^{-1}  \\
  &\qquad\Big(\int_{\substack{y\in \sigma(u,x)\\|x|^2+|y+t|x|^{-2}x|^2\le2}}
  f_{00<}(u,x,y+t|x|^{-2} x) \, d_{\sigma(u,x)}y \Big) dxdu \,, \\
  \II_{00>}(t) = & \int_{\R^n}\int_{B_{\sqrt2}(\R^{d_1})\cap B^{|t|/2\sqrt2}(\R^{d_1})}
  |x|^{-1}  \\
  &\qquad\Big(\int_{\substack{y\in \sigma(u,x)\\|x|^2+|y+t|x|^{-2}x|^2\le2}}
  f_{00>}(u,x,y+t|x|^{-2} x) \, d_{\sigma(u,x)}y \Big)dx du \,.
    \end{split}
  \end{equation*}

  Consider first  function   $\II_{00<}(t)$.   We observe that,  by \eqref{y_in_sigma},  for $y\in \sigma(u,x)$ and $|x|\le |t|/2$ (cf. \eqref{supp})
  $$
|y+t|x|^{-2}x|\ge { |\tau||x|^{-1}} = \left|t-\frac12|u|^2\right||x|^{-1}\ge-t|x|^{-1}>
\sqrt 2  \,,\quad \mbox{for } t<0  \,, 
  $$
so that  $\cI_{00<}(t)= 0$ for
$t<0$. For $t>0$,  performing the change of 
  variables $\sqrt{t} 
  u'=u$, $tx'=x$, we get
  \begin{equation*}
\begin{split}
 \II_{00<}(t) = & t^{{d}/2-1}\int_{\R^n}\int_{B_{\sqrt2/t}(\R^{d_1})\cap B_{1/2}(\R^{d_1})}
  |x'|^{-1} \varphi(8|x'|^2)  \\
  &\;\Big(\int_{\substack{y\in \sigma(u',x')\\|x'|^2t^2+|y+|x'|^{-2}x'|^2\le2}}
  f_{00}(\sqrt{t}u',tx',y+|x'|^{-2} x')\,
  d_{\sigma(u',x')}y \Big) dx' du'\,,
    \end{split} 
  \end{equation*}
  where we notice that $\sigma(u',x') =\sigma(u,x)$. We differentiate with
  respect to $t$, observing that, by induction in $k$, for any $l$ and    $k$   we have
\begin{equation*}
  \begin{split}
    \frac{d^k}{dt^k} t^lg(\sqrt t u',tx') =& \sum_{l_1+l_2+l_3=k}
    c_{l_1,l_2,l_3} t^{l-l_1-l_2/2}\left({u'}^{l_2}\cdot
    \nabla_u\right)^{l_2}\\
    &\left({x'}^{l_3}\cdot \nabla_x\right)^{l_3}g(\sqrt t u',tx')   \,,
  \end{split}
\end{equation*}
for any sufficiently regular function $g$ and suitable
  constants $c_{l_1,l_2,l_3}$.
From this we get
\begin{equation*}
 \begin{split}
    \left|\p^k \II_{00<}(t)\right| &\lappr_{k,M} 
    \max_{l_1+l_2+l_3=k}t^{{d}/2-1-l_1-l_2/2}  \|f\|_{k,M}  \int_{\R^n}{|u'|}^{l_2} \lan
    u'\sqrt t\ran^{-M} \\
    &\int_{B_{\sqrt2/t}(\R^{d_1})\cap B_{1/2}(\R^{d_1})}
    |x'|^{l_3-1}  
   \Big( \int_{\substack{y\in \sigma(u',x')\\|x'|^2t^2+|y+|x'|^{-2}x'|^2\le2}}
    \,
    d_{\sigma(u',x')}y \Big) dx' du'\,.
    \end{split} 
\end{equation*}
Denoting points of the space $x^\perp$ as $\bar y$, we see that the  integral over $d_{\sigma(u',x')}y$
is bounded by 
\be\label{gg1}
\int_{\substack{\bar y\in x^\perp\\|x'|^2t^2+|\bar y+\tau'|x'|^{-2}x'|^2\le2}}  1
\,d\bar y,\qquad \tau'=1-\frac12|u'|^2\,.
\ee
By \eqref{p11},  on
the support of the integrand $ |\tau'|   \le \sqrt 2 |x'| $. So there 
\be\label{gg2}
 1-\sqrt2|x'|\le\frac{|u'|^2}2\le 1+\sqrt2 |x'|  \,.
\ee
As the the domain of integration in $\bar y$ is bounded, then 
 integral \eqref{gg1} is bounded by a constant. So 
 putting $|x'|=r'$, $|u'|=R'$ and using \eqref{gg2}   we have
\begin{equation*}
 \begin{split}
\left|\p^k\II_{00<}(t)\right|\lappr_{k,M}& \max_{l_1+l_2+l_3= k}
\|f\|_{k,M}t^{{d}/2-l_1-l_2/2-1}\int_0^{1/2} {r'}^{\dd-2+l_3}\\
&\qquad\Big(\int_{\sqrt2\sqrt{1-\sqrt2 r'}}^{\sqrt2\sqrt{1+\sqrt2 r'}} \,{R'}^{n-1+l_2}\lan
{R'}^2t\ran^{-M/2} \, dR' \Big) dr'\,.
    \end{split} 
\end{equation*}
Since  $r'\le 1/ 2$, then on the domain of integration  $\sqrt{2-\sqrt2}\le R'\le \sqrt{2+\sqrt2}$, while
$
\sqrt2\sqrt{1+\sqrt2 r'}-\sqrt2\sqrt{1-\sqrt2 r'}\lappr r'  \,.
$
So the integral in  $dR'$ is bounded by $C \lan t\ran^{-M/2}
r'$. 
  Therefore 
\begin{equation*}
  \begin{split}
\left|\p^k\II_{00<}(t)\right| \lappr_{k,M} \max_{ l_1+l_2+l_3= k}
\|f\|_{k,M}t^{{d}/2-l_1-l_2/2-1}  
\lan t\ran^{-M/2}   \int_0^{1/2} {r'}^{\dd-1+l_3}\, dr'\,.
    \end{split}
\end{equation*}
This implies that for $0< t\le4$, for  any $k\le  k_*$ and any $\dd\ge 1$ we have
\be\label{int1}
|\p^k\II_{00<}(t)|\lappr_{k}\|f\|_{k,0}t^{{d}/2-k-1}\,.
\ee
While for any $t \ge4$ and any $k\le  k_*$, 
\be\label{int2}
\begin{split}
|\p^k\II_{00<}&(t)|  \lappr_{k,M} \max_{ l_1+l_2+l_3= k}
\|f\|_{k,M,{d}}t^{{d}/2- M/2-l_1-l_2/2-1}  \\
&\times 
\int_0^{\sqrt2/t} {r'}^{\dd-1+l_3} \, dr'
\lappr_{k,M}\|f\|_{k,M}  
t^{-(M+2+k +2d_1-{d})/2}  \,.
\end{split}
\ee
We recall that $\cI_{00<}(t)$ vanishes for $t<0$. 
\medskip

For $\II_{00>}(t)$ we first note  that by \eqref{p10} and \eqref{supp}    function  $\II_{00>}(t)$ vanishes  if $|t|> 4$. Next, by
 induction in $k$, we observe that 
\begin{equation}\label{eq:ind}
  \begin{split}
  \frac{d^k}{dt^k} g(tx|x|^{-2})(1-\varphi(8|x|^2/t^2)) =&
  \sum_{l_1+l_2+l_3=k} c_{l_1,l_2,l_3}|x|^{2(l_2-l_1)}t^{-3l_2-l_3}\\ \times
  &\left(\left(x\cdot
  \nabla\right)^{l_1}g\right)\,\frac{d^{l_2}}{dy^{l_2}}(1-\varphi)   \,,
    \end{split}
\end{equation}
where $c_{l_1,l_2,l_3}= 0$  if $l_3>0$ and $l_2=0$. Since
$\varphi'\neq 0$ only for 
$|t|/2\sqrt2\le|x|\le |t|/2$, then 
$$
\frac{d^{l_2}}{dy^{l_2}}(1-\varphi)t^{-3l_2-l_3}\lappr_{l_2,l_3}
|x|^{-3l_2-l_3}  \,,\quad l_2>0  \,,
$$
so that
$$
\left|\frac{d^k}{dt^k} g(tx|x|^{-2})(1-\varphi(8|x|^2/t^2)) \right|
\lappr_k |x|^{-k}\|g\|_{k,0}  \,.
$$
From here, in a way analogous to \eqref{p12}, putting again $|x|=r$ and 
$|u|=R$, we get that
  \begin{equation*}
 \begin{split}
|\p^k\II_{00>}(t)| &\lappr_{k,M} \|f\|_{k,M} 
\int_{|t|/2\sqrt 2}^{\sqrt2 }r^{\dd-k-2} \Big(\int_0^\infty R^{n-1}\lan
R\ran^{-M}\chi_{|\tau|\le \sqrt 2 r} \, dR \Big)dr
    \end{split}   
  \end{equation*}
  (here and below $\int_a^b dr =0$ if $b\le a$). 
  Since on the  integration domain, due to \eqref{supp} and    the indicator function
$\chi_{|\tau|\le \sqrt 2 r}$, we have $R^2\le 6\sqrt 2r$, then
  \begin{equation}\label{int3}
    \begin{split}
|\p^k \II_{00>}(t)| &\lappr_{k,M,n}
\|f\|_{k,M}\int_{|t|/2\sqrt2}^{\sqrt2}dr\,r^{{d}/2-k-2}\\
&\lappr_{k,M} \left\{\begin{array}{cc}
  \|f\|_{k,M}  \,,& k<{d}/2-1,\\
  \|f\|_{k,M}(1+|\ln|t||)  \,,& k\le  {d}/2-1.
\end{array}
\right.\ 
    \end{split}
  \end{equation}

If  $k< {d}/2-1$, then by the above 
 $\p^k \II_{00}(t)$ is bounded for all $t$. In this case, modifying the integrand in \eqref{p9} 
by the factor $\chi_{|x|\ge\eps}$, we see that thus obtained
functions  $\II_{00>}^\eps, \II_{00<}^\eps $ satisfy the same
estimates as the functions $\II_{00>}, \II_{00<}$ above, so the function $\II_{00}^\eps$ also does. The functions $\p^k
\II_{00}^\eps(t)$ with $\eps>0$ obviously are continuous in $t$ and
converge to  $\p^k \II_{00}(t)$ uniformly on bounded intervals. So the
latter function also is continuous. Similar  $\p^k \II_{00}(t)$ with
$k=d/2-1$ is continuous on any set $|t| \ge\eps>0$, so is continuous for
$t\ne0$.

\subsubsection{Integral $\II_{11}(t)$.}
Due to \eqref{p8} and similar to \eqref{p9}, \eqref{p99},  for any $k\le k_*$ we have 
$$
\p^k\II_{11}(t)=\int_{\R^n}\int_{|x|\geq 1/2}|x|^{-1}\,  \Big(\int_{x^\perp} d_y^k\, 
f_{11}(u,x,\bar y+\tau x |x|^{-2})[x|x|^{-2}]\,d\bar y \Big)dx du  \, .
$$
We easily see that $\II_{11}(t)$ is a $C^k$-smooth function and, since $M>{d}$ and 
 $\big|\bar y+\tau x|x|^{-2}\big|\geq |\bar y|,$ then
\be\lbl{q0}
|\p^{k}\II_{11} (t)\big|\lappr_{k,M}\|f\|_{k,M} \quad \forall t.
\ee
Now let $|t|\geq 1$. Let us write  $\p^k\II_{11}$  as 
\be\lbl{q1}
\p^k\II_{11}(t)=\int_{\R^n} \int_{|x|\geq 1/2}|x|^{-k-1} \int_{x^\perp} \Phi_k(\bar z)\,d\bar y dx du\,, 
\ee
where $\bar z=(u,x,\bar y)$, $\bar y\in x^\perp$, and 
\be\lbl{q2}
|\Phi_k(\bar z)|{ \lappr_{k} \|f\|_{k,M}}\lan \hat z\ran^{-M}, \quad\hat z=(u,x,\bar
y+\tau x|x|^{-2}). 
\ee
Obviously,
\be\lbl{q3}
|\hat z|\ge |\bar z|\,,\quad |\hat z|\ge 2^{-1/2} \big( |\bar z| + |\tau| |x|^{-1}\big).
\ee
Below we separate the cases $n\ge 1$ and $n=0$.

1) Let $n\ge 1$.

a) We first integrate in \eqref{q1} over $u$ in the spherical layer
$$
O:=\{u: |\tau| = 
\left|t-\tfrac12|u|^2\right|\le\tfrac12   t\}\,.
$$
It is empty if $t<0$, while  for $t\ge 0$,
$
O=\{u: t \le  |u|^2\le 3t\}\,.
$
By \eqref{q2} and the first relation in \eqref{q3}, for $t\ge0$  the part of the  integral in \eqref{q1} 
with $u\in O$ is bounded by
$$
K := C_k\|f\|_{k,M}   \int_O \int_{|x|\geq 1/2}|x|^{-k-1} \int_{x^\perp}
\left( |t|+|x|^2+|\bar y|^2\right)^{-M/2}\,d\bar y dx du\,. 
$$
 Since $\int_O 1\,du\le
Ct^{n/2}$, then by putting $r=|x|$, $|t|+r^2=T^2$ and $R=|\bar y|/T$ we find that 
$$
K{ \lappr_{k}}\|f\|_{k,M}t^{n/2}  \int_{1/2}^{\infty}r^{\dd-2-k}\,
T^{\dd-1-M}\,\int_0^\infty R^{\dd-2}
\left( 1+R^2\right)^{-M/2}\,dR  dr\,.
$$
The integral in $dR$ is bounded since $M>\dd$, so that
$$
K{ \lappr_{k,M}}\|f\|_{k,M}t^{n/2}\int_{1/2}^{\infty}r^{\dd-2-k}\,
\left( |t|+r^2\right)^{(\dd-1-M)/2}\,dr\,.
$$
Recalling that we are considering  the case $t\ge 1$, we put $r=
\sqrt {t}\,l$. Then
$$
K{ \lappr_{kM}}\|f\|_{k,M}t^{\frac{n+1+\dd-2-k+\dd-1-M}2}\int_{t^{-1/2}/2}^\infty
  l^{\dd-2-k} (1+l^2)^{\frac{\dd-1-M}2}\,dl  \,.
  $$
  Since $M>2\dd$, the integral over $l$ converges and we get
  $$
K { \lappr_{k,M}}\|f\|_{k,M}|t|^{-(M+2-{d}+k)/2}
 |t|^{\max(0,  k+1-\dd)/2} 
  Y(t)  \,,
$$
with $ Y=\ln t$ if $k=\dd-1$ and $ Y=1$ otherwise. 
 Then,  in the case $ Y=1$ the component 
of \eqref{q1}, corresponding to $u\in O$, is bounded by
\begin{equation}\lbl{q5}
C{ (k, M, {d})}
\|f\|_{k,M}|t|^{- \kappa}\,, \qquad \kappa=\frac{M+2-{d}}2, 
\end{equation}
 for all $ |t|\ge 1$, since $\max(0,
k+1-\dd)\leq k$. 
If $ Y=\ln t$ the same estimate holds in the case $\dd\geq 2$ since $\max(0,
k+1-\dd)< k$. In the case $\dd=1$ and $ Y=\ln t$ (i.e. $k=0$) we get \eqref{q5} with 
$\kappa$ replaced by any $\kappa'<\kappa$ (and  constant $C$ depending on $\kappa'$).

b) Now consider the integral for $u\in O^c= \R^n\backslash O$. There
$|\tau| =|t-\tfrac12|u|^2|\ge \frac12 |t|$. So, by inequalities 
 \eqref{q2} and \eqref{q3},
$|\Phi_k(\bar z) | { \lappr_{k}} \lan(u,\bar y)\ran^{-M}$ and $|\Phi_k(\bar z) |{ \lappr_{k}}(|t||x|^{-1} +|x|)^{-M}$. Let $M=M_1+M_2$, $M_j\ge 0$. Then
the part of the integral \eqref{q1} for $u\in O^c$ is bounded by
$$
C\|f\|_{k,M}   \int_{|x|\ge 1/2}
|x|^{-1-k}\left(t|x|^{-1}+|x|\right)^{-M_1}
\Big(\int_{\R^n}\int_{x^\perp} \lan(u,\bar y)\ran^{-M_2}\ d\bar y du \Big) dx\, 
$$
Choosing $M_2=n+{d_1}-1+\gamma$  with $0<\gamma<1$ (then $M_1, M_2>0$ since $M>d$) 
we achieve that the integral over
$du\,d\bar y$ is bounded by $C(\gamma)$, for any $\gamma$. Since by
Young's inequality
\footnote{Indeed, by Young's inequality with $p=1/a,\, q=1/(1-a)$ we have
	that $A^a B^{(1-a)} \le aA + (1-a)B \le C_a(A+B)$. This proves the
	assertion.}
$$
(A+B)^{-1}\le C_aA^{-a}B^{a-1}\,,\quad 0<a<1\,,
$$
for any $A,B>0$, then
$
\left(t|x|^{-1}+|x|\right)^{-M_1}\le C_a|x|^{(2a-1)M_1}|t|^{-aM_1}
$
 $ (0<a<1)$. 
So the integral above is bounded by
$$
C(\gamma)\|f\|_{k,M}|t|^{-aM_1}\int_{|x|\ge 1/2}
|x|^{-1-k+bM_1}\,dx\,,\quad b=2a-1 \in (-1, 1) \,.
$$
Denote $b_*=\tfrac{1+k-\dd}{M_1}$. Then for $b=b_*$ the exponent for
$|x|$ in the formula above equals $-{d_1}$, and $b_*>-1$  if
$\gamma$ is sufficiently small,  since $M>d$. Noting that
$$
a(b_*)M_1 = \frac{b_*+1}2M_1= \frac{M+2+k-{d} -\gamma}2 = 
 \kappa +\frac{k}2 - \frac\gamma2
$$  
($\kappa$ was defined in \eqref{q5}), 
we see that the part of 
integral \eqref{q1}, corresponding to $u\in O^c$, 
\be\label{int4}
\begin{split}
\text{ is bounded by
\eqref{q5} if $k\ge1$, while for $k=0$ it is bounded by }\\
\text{
\eqref{q5} with $\kappa$ replaced by any $\kappa'<\kappa$. }
\end{split}
\ee

2) Now let $n=0$. Then
\be\lbl{q6}
\left|\p^k\II_{11}(t)\right|\le \int_{|x|\ge 1/2}
|x|^{-1-k} \int_{x^\perp} \Phi_k(\bar z) \, d\bar y  dx \,, \quad \bar
z=(x,\bar y)\,,
\ee
where
$
|\Phi_k(\bar z) |{  \lappr_k}\lan \hat z\ran^{-M}
$
with  $ \hat z = (x,\bar y+ tx|x|^{-2})$.
Repeating literally the step 1b) above with $n=0$ we get that for
$|t|\ge 1$ the integral in \eqref{q6} may also be bounded by
\eqref{q5}. We recall that for $|t|\le 1$ the derivative
$\partial^k\II_{11}(t)$ was estimated in \eqref{q0}.

\subsubsection{Integral $\II_{10}(t)$.}
Now we use the second disintegration in \eqref{p05} instead of the first. Since
by \eqref{p8} on the support of the integrand $|y|\ge 1/\sqrt 2$, then
repeating the argument above with $x$ and $y$ swapped we get that
$\II_{10}(t)$ meets the same estimates as $\II_{11}(t)$.

\subsubsection{
End of the proof of Theorem \ref{t_ap1} 
}

Finally, 

\noindent --  combining together  relations \eqref{p13}, \eqref{int1},
\eqref{int3} and  \eqref{q0} we estimate $ \p^k\II(t)$ for $0<|t|\le4$,

while

\noindent --  combining together    \eqref{int2}, \eqref{q5}, \eqref{int4} and using  the fact that  $ \p^k\II_{00>}(t)$ and  $ \p^k\II_{00}(t)$
vanish for $|t|\ge4$ when $n=0$, 
 we estimate $ \p^k\II(t)$ for  $t\ge4$.

For the reason, explained at the end of Section \ref{s_7.3.1}, the involved derivatives are continuous functions. This proves the theorem.

\subsection{Linear transformations of quadrics}\label{s_7.4}

In this subsection we denote by $C_0$ spaces of continuous functions with compact support. 

In $\R^{d}=\{z\}$ let us consider a quadratic form with real coefficients\,\footnote{ Section \ref{s_7.4}-\ref{s_7.5}
 is the only part of our work, where 
quadratic forms  are allowed to have non-rational coefficients.}
$
F(z) = \tfrac12 Az \cdot z
$
of signature $(n_0, n_+, n_-)$ such that 
$
n_0=0, \; n_+ \ge n_-  =: {d_1}\ge1.
$
Denote $n=n_+ - n_-$. 

Using the standard diagonal normal form for a symmetric quadratic form,  we
	construct a linear transformation 
$$
L: \R^{d} \to \R^{d}, \quad z \mapsto Z=(u,x,y), \quad u\in \R^n,\; \;\;x, y\in \R^{{d_1}}, 
$$
such that 
$
Q(L(z)) = F(z)$, where 
$Q(Z) = \tfrac 12 |u|^2 + x\cdot y. 
$
Consider the corresponding quadrics 
$
\Sigma ^Q_t = \{ Z: Q(Z) =t\} $, $ \Sigma^F_t = \{ z: F(z) =t\} , 
$
and the $\delta$-measures $\mu^Q_t, \mu^F_t$  on them (e.g. see   \cite[Section~II.7]{Khin}): 
\be\label{2}
\lan \mu^Q_t,  f^Q \ran = \lim\limits_{\eps\to 0} \frac1{2\eps} \int_{ t-\eps \le Q(Z) \le t+\eps}  f^Q(Z)\, dZ,
\ee
$$
\lan \mu^F_t,  f^F \ran = \lim\limits_{\eps\to 0} \frac1{2\eps} \int_{ t-\eps \le F(z) \le t+\eps}  f^F(z)\, dz,
$$
where $f^Q, f^F \in C_0(\R^d)$ and 
 $\lan \mu, f\ran$ signifies the integral of a function $f$ against a measure $\mu$.  Then $\mu^Q_t$ and $\mu^F_t$ are Borel
 measures in $\R^d$, supported by $\Sigma_t^Q$ and $\Sigma_t^F$ respectively, and 
  for  $f^Q \in C_0\big(\Sigma_t^Q\setminus \{0\}\big)$
and $f^F  \in C_0\big(\Sigma_t^F\setminus \{0\}\big)$  we have 
$$
\lan \mu^Q_t,  f^Q \ran =  \int_{ \Sigma ^Q_t}  \frac{f^Q(Z)}{ |\nabla  Q(Z)|}\, dZ\!\mid\!_{\Sigma ^Q_t},  \quad
\lan \mu^F_t,  f^F \ran =  \int_{ \Sigma^F_t}  \frac{f^F(z)}{ |\nabla  F(z)|}\, dz\!\mid\!_{\Sigma^F_t},
$$
where $dZ\!\mid\!_{\Sigma^{Q (\text{or}\, F)}_t}$ is the volume element on $\Sigma^{Q (\text{or}\, F)}_t\setminus\{0\}$, induced from $\R^{d}$, see \cite{Khin}. Now let $f^F = f^Q \circ L$. Then the integral in \eqref{2} equals
$$
 \int_{ t-\eps \le Q(Z) \le t+\eps}  f^Q(Z)\, dZ = | \det(L)|   \int_{ t-\eps \le F(z) \le t+\eps}  f^F(z)\, dz,
$$
so passing to the limit we get that
\be\label{transform}
L \circ \big(  | \det(L)|  \mu_t^F \big) = \mu_t^Q. 
\ee
  Thus,

\noindent { to examine the function 
\be\label{I1} 
t\mapsto \cI^F(t;f)= \lan \mu^F_t, f\ran, \qquad \mu^F_t = | \nabla F(z)|^{-1}dz\!\mid_{\Sigma^F_t},
\ee
 we are free to use any linear  coordinate system in $\R^{d}$ since changing the coordinates we only modify 
 function $\cI^F$ 
  by a constant factor. }

\subsection{Sign definite forms}\label{s_7.5}
Finally let us  consider the case when $n_0=0$ and $\min(n_+, n_-)=0$, i.e. when 
the form $
F(z) = \tfrac12 Az \cdot z
$
is sign--definite and non degenerate. Suppose for definitenes 
 that $n_-=0$. Then there exists a 
linear transformation $L$ such that $F(z)=Q(L(z))$, where 
$Q(Z)=\tfrac12|Z|^2$, $Z\in \R^d$. The quadric  $\Sigma_t$ reduces to
the empty set for $t<0$, so  function $\II^F(t)$ (see \eqref{I1}) vanishes for $t<0$.
The calculation of previous subsection remains true in this case, so \eqref{transform} and 
 the change of coordinates $Z=\sqrt{2t}\,Z'$ show that
\[ \begin{split}
\II^F &(t;f)=  C(d,L) t^{-1}\int_{|Z| =\sqrt{2t} } f^Q( Z)\,\mu_{S^{d-1}_{\sqrt{2t}} }(dZ) \\
&= C(d,L) t^{d/2-1}\int_{|Z'|=1} f^Q(\sqrt{2 t} Z')\,\mu_{S_1^{d-1}}(dZ'),  \qquad t > 0,\;\  f^Q = f\circ L^{-1}
  \,,
\end{split}
\]
where $\mu_{S_r^{d-1}}$ is the volume element  on the $d-1$ sphere of radius $r$. From this relation we immediately 
get that for any $k\le \min(d/2-1,k_*)$, 
$$
\left|\p^k\II^F(t)\right| \lappr_{k}  \|f\|_{k,0}  \quad \text{if}\quad 0\le t\le1, 
$$
and 
$$
\left|\p^k\II^F(t)\right| \lappr_{k,M}  \|f\|_{k,M}
t^{-(M+2+k-d)/2}    \quad \text{if}\quad  t\ge1.  
$$

\subsection{General result}
We  sum up the obtained results in the following

\begin{theorem}\label{c_integrals}
Consider any nondegenerate quadratic form 
$
F(z) = \tfrac12 Az \cdot z
$
 on $\R^{d}$, $d\ge
3$, and   a function $f\in\cC^{k_*,M}(\R^d)$, $M>{d}$. 
Then the corresponding integral $ \cI^F(t;f)= \lan \mu^F_t, f\ran$ (see \eqref{I1}) meets the assertions of Theorem~\ref{t_ap1}. 
\end{theorem}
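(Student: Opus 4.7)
The plan is to reduce the general assertion to the two already-established cases: Theorem~\ref{t_ap1} for the indefinite normal form $\tfrac12|u|^2+x\cdot y$ and Subsection~\ref{s_7.5} for sign-definite forms. The reduction proceeds via Sylvester's law of inertia together with the change-of-coordinates identity \eqref{transform} derived in Subsection~\ref{s_7.4}. Since $A$ is non-degenerate, $F$ has signature $(n_+,n_-)$ with $n_++n_-=d$, and there is a linear isomorphism $L\colon\R^d\to\R^d$ with $F(z)=Q(L(z))$, where $Q$ is one of the normal forms already handled.

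I would split the argument into three cases. If $F$ is sign-definite, then $Q(Z)=\pm\tfrac12|Z|^2$ and the bounds of Subsection~\ref{s_7.5} apply; a direct comparison of the exponents shows these are at least as strong as the conclusion of Theorem~\ref{t_ap1}. If $F$ is indefinite with $n_+\ge n_-=:d_1\ge 1$, then the choice $Q(Z)=\tfrac12|u|^2+x\cdot y$ on $\R^n\times\R^{d_1}\times\R^{d_1}$ with $n=n_+-n_-\ge 0$ places us directly in the setting of Theorem~\ref{t_ap1}. If instead $n_-> n_+\ge 1$, I apply the indefinite case to $-F$, observing $\cI^{-F}(t;f)=\cI^F(-t;f)$; since the bounds \eqref{log}--\eqref{q66} depend on $t$ only through $|t|$ and $\lan t\ran$, they are symmetric under $t\mapsto -t$ and transfer back to $F$ unchanged.

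In each case, \eqref{transform} yields $\cI^F(t;f)=|\det L|^{-1}\,\cI^Q(t;f\circ L^{-1})$, and the chain rule together with the bi-Lipschitz bound $\lan L^{-1}z\ran\gtrsim_L\lan z\ran$ gives
$$\|f\circ L^{-1}\|_{k,M}\lappr_L\|f\|_{k,M}.$$
Feeding this into the bounds of Theorem~\ref{t_ap1} or Subsection~\ref{s_7.5} completes the argument, with constants depending on $\|A\|$ and $\|A^{-1}\|$ through $L$, as permitted by the conventions of the paper. The main point requiring a word of care is that \eqref{transform} was derived for $f^Q,f^F\in C_0(\R^d)$, whereas here $f\in\cC^{k_*,M}(\R^d)$ merely decays at infinity; I would justify the extension by truncating $f$ against a sequence of smooth compactly supported cutoffs and invoking dominated convergence, using that the polynomial growth of the densities read off from the disintegrations \eqref{p05} renders the relevant tails integrable against $\lan z\ran^{-M}$ whenever $M>d$.
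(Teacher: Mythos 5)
Your proposal is correct and follows essentially the same route as the paper: reduce to the normal form via Sylvester's law and the measure-transformation identity \eqref{transform}, treat the sign-definite case by Subsection~\ref{s_7.5}, and handle $n_->n_+$ by passing to $-F$ with $\cI^{-F}(t;f)=\cI^{F}(-t;f)$. The extra remarks on the norm comparison $\|f\circ L^{-1}\|_{k,M}\lappr_L\|f\|_{k,M}$ and on extending \eqref{transform} beyond $C_0$ are sensible refinements of details the paper leaves implicit.
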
 
\begin{proof}
i) If $n_+\ge n_-$, then by means of a linear change of variable  $F$ may be put to the normal form \eqref{p-1}, where 
$d_1\ge0$. Now  the assertion follows from the argument in Subsections \ref{s_7.4}, \ref{s_7.5}
 and Theorem~\ref{t_ap1}.

ii) If $n_- >n_+$, then the quadratic form $-F$ is as in i), and the assertion follows again since obviously
 $ \ 
\cI^{-F}(t;f) = \cI^F (-t; f) .
$
\end{proof}

\appendix
\section{The $J_0$ term: case $d=4$}
\lbl{s:appendix}
 In this section we find asymptotic for the term $J_0$ from \eqref{eq:J><} in the case 
 \be\label{dm}
 d=4\quad\text{ and } \quad m=0.
 \ee
 Below in this section we always assume \eqref{dm}.

\subsection{Preliminary results and definitions}\lbl{sec:app1}

We will need Lemmas~30 and 31 of \cite{HB}, restricted for the case $m=0$ and $d=4$, which we state below without  a proof.
 Recall that  constants $\sigma^*_\vc(A)$ are defined in \eqref{eq:sigma_p} and  $\sigma^*(A)=\sigma^*_{\mathbf 0}(A)$.
 Set $\alpha:=7/2$ and
recall~\eqref{dm}.
\begin{lemma}[Lemma~30 of \cite{HB}]\lbl{l:30}
   For any $\eps>0$ and $X\in\N$, 
\begin{equation}\label{eq:30}
\sum_{q\le X} S_q(\vc; A,0) = \eta(\vc)\sigma_\vc^*(A)\sum_{q\le X}q^{d-1} +
O_{\eps}(X^{\alpha+\eps} (1+|\vc|))  \,,
\end{equation}
 where $\eta(\vc)=1$ if $\vc\cdot A^{-1}\vc = 0$ and at the same time $\det A$ is a square of an integer, and  $\eta(\vc) = 0$ otherwise.
Moreover, $|\sigma_\vc^*(A)|\lappr_{\eps} 1+|\vc|^\eps$ when
$\eta(\vc)\neq 0$.
\end{lemma}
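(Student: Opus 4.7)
The plan is to reduce to prime powers via multiplicativity, evaluate the resulting prime-power sums explicitly using quadratic Gauss sums, and then extract the main term so that the leftover collapses into the stated error via cancellation over $q$.

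First I would generalise \eqref{Smult} to $S_{qq'}(\vc) = S_q(\vc)S_{q'}(\vc)$ whenever $(q,q')=1$. The proof of Lemma~\ref{l:31HB} carries over essentially verbatim: under the change $a = qa_{q'}+q'a_q$ and $\vb = q\bar q \vv_{q'}+q'\bar q'\vv_q$ the extra linear term splits as $\vc\cdot\vb = q\bar q\,\vc\cdot\vv_{q'}+q'\bar q'\,\vc\cdot\vv_q$, and absorbing the units $\bar q,\bar q'$ into $a_q,a_{q'}$ produces the desired factorisation. Consequently the formal Dirichlet series $\sum_q q^{-s}S_q(\vc)$ factors as the Euler product $\prod_p\sum_{l\ge 0}p^{-sl}S_{p^l}(\vc)$, and it suffices to understand $S_{p^l}(\vc)$ at prime powers.

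Second, for $p\nmid 2\det A$ I would complete the square modulo $p^l$: setting $\vb=\vb'-(2a)^{-1}A^{-1}\vc$, one obtains
\begin{equation*}
S_{p^l}(\vc)=\sum_{a(\!\!\!\!\!\!\mod p^l)}^{*}\! e_{p^l}\!\Bigl(-\tfrac{(2a)^{-1}}{2}\,\vc\cdot A^{-1}\vc\Bigr)\,G_{p^l}(a),
\end{equation*}
where $G_{p^l}(a)=\sum_{\vb'(\!\!\!\mod p^l)}e_{p^l}(\tfrac a2 A\vb'\!\cdot\!\vb')$ is a pure quadratic Gauss sum. For $d=4$ the standard evaluation gives $G_{p^l}(a)=p^{2l}\bigl(\tfrac{\det A}{p}\bigr)^{l}\omega_{p^l}(a)$ with $\omega_{p^l}(a)$ a bounded root of unity depending on $a\bmod p$. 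When $\det A$ is a perfect square the $l$-th power of the Jacobi symbol is trivial, and the $a$-sum reduces to a Ramanujan-type sum whose geometric series in $l$ produces the local density $\sigma_p^\vc(A,0)$; otherwise the Jacobi factor gives cancellation which, after multiplication over $p$, suppresses the main term entirely. The finitely many bad primes $p\mid 2\det A$ are handled by direct case analysis and contribute a bounded factor. The $\prod_p(1-p^{-1})$ in $\sigma^*_\vc(A)$ appears when translating the Euler product at $s=d-1$ into the arithmetic sum $\sum_{q\le X}q^{d-1}$ through the local zeta factor, and the condition $\vc\cdot A^{-1}\vc=0$ is precisely what makes the residual Ramanujan sum attain its maximal value of $\phi(p^l)$; combining with the square-determinant condition yields $\eta(\vc)\in\{0,1\}$ as stated.

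The main obstacle will be the error estimate $O_\eps(X^{\alpha+\eps}(1+|\vc|))$ with $\alpha=7/2$. Lemma~\ref{l:25HB} only gives $|S_q(\vc)|\lappr q^{d/2+1}=q^3$, so naive termwise summation yields $O(X^4)$, too weak by a full factor $X^{1/2}$. The saving must come from cancellation \emph{in the sum over $q$}, driven by oscillation of $\omega_{p^l}(a)$ and of the Jacobi-symbol factor $\bigl(\tfrac{\det A}{q}\bigr)$ when the main term does not contribute. Following the strategy of \cite{HB}, I would factor $q=q_1q_2^2$ into squarefree and square parts, apply the explicit evaluation above to convert the squarefree factor into a sum of Dirichlet characters modulo divisors of $\det A$, and invoke the P\'olya--Vinogradov bound to gain the required $X^{-1/2}$ over the trivial estimate. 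The linear dependence on $1+|\vc|$ stems from the fact that completing the square is unavailable when $p$ divides $2a$ in some coordinate: handling those exceptional residue classes separately costs a factor bounded by $|\vc|$ per prime, which assembles to the $(1+|\vc|)$ prefactor after summation.
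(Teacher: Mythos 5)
The paper does not prove this lemma: the text immediately preceding it reads ``We will need Lemmas~30 and 31 of \cite{HB} \dots which we state below without a proof.'' There is therefore no in-paper argument to compare your proposal against; the statement is imported verbatim from Heath-Brown.

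That said, your sketch is broadly consistent with the route Heath-Brown actually takes: multiplicativity of $S_q(\vc)$ via the Chinese remainder theorem (your generalisation of \eqref{Smult} is correct, though the substitution needed is a simultaneous change $a_q\mapsto \bar q'^2 a_q$, $\vv_q\mapsto q'\vv_q$, not merely ``absorbing the units into $a_{q}$''), reduction to prime-power local sums, completion of the square to expose the dual form $F^*(\vc)=\tfrac12\vc\cdot A^{-1}\vc$ and a Gauss sum, and extraction of the local density when $F^*(\vc)=0$ and $\det A$ is a square. Two points in the error analysis are, however, not convincing as written. First, the claim that handling bad residue classes ``costs a factor bounded by $|\vc|$ per prime, which assembles to the $(1+|\vc|)$ prefactor after summation'' cannot be right: a factor per prime would compound multiplicatively and wildly overshoot $1+|\vc|$. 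In Heath-Brown's argument the $\vc$-dependence enters through bounds in terms of the integer $F^*(\vc)\ll|\vc|^2$ and divisor-type estimates, not through a per-prime loss. Second, the step from the trivial $\sum_{q\le X}|S_q(\vc)|\ll X^{4}$ down to $X^{7/2+\eps}$ is not a single P\'olya--Vinogradov application on the squarefree part; Heath-Brown combines his Lemmas~25, 26 and 28 (explicit squarefree evaluation, a separate bound for square parts, and a Kloosterman-type oscillation in $a$ when $F^*(\vc)\not\equiv0$) and the saving is distributed across these. Your outline identifies the right ingredients but the assembly, and in particular the source of the $(1+|\vc|)$ factor, would need to be reworked to produce a correct proof.
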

\begin{lemma}[Lemma~31 of \cite{HB}]\lbl{l:31}
	Assume that the determinant $\det A$ is a square of an integer. Then
	for any $\eps>0$ and $X\in\N$,
	\begin{equation}\non
	\sum_{q\le X} q^{-d}S_q(0; A,0) = \sigma^*(A)\log X + \hat C_A +
	O_{\eps}(X^{\alpha+\eps -d}) \,,
	\end{equation}
	where $\hat C_A$ is a constant depending only on $A$.
	Otherwise, if $\det A$ is not a square of an integer, then for any $\eps>0$ and $X\in\N$
	\begin{equation}\non
	\sum_{q\le X} q^{-d}S_q(0; A,0) = L(1,\chi)\prod_p (1-\chi(p)p^{-1})\sigma_p(A,0) + O_{\eps}(X^{-1/2+\eps}) \,,
	\end{equation}
	  where  $\chi$ is the Jacobi symbol $(\tfrac{\det(A)}{*})$
	and $L(1,\chi)$ is the Dirichlet $L$--function.
\end{lemma}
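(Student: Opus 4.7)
The plan is to deduce both cases from Lemma~\ref{l:30} (taken at $\vc=0$) by Abel (partial) summation. Set $T(X):=\sum_{q\le X}S_q(0;A,0)$. Using the elementary estimate $\sum_{q\le X}q^{d-1}=X^d/d+O(X^{d-1})$, the $O(X^{d-1})=O(X^3)$ can be absorbed into the error term of Lemma~\ref{l:30} since $\alpha=7/2>3=d-1$, producing
\[
T(X)=\tfrac{\eta(0)\sigma^*(A)}{d}X^d+O_\eps(X^{7/2+\eps}).
\]
Abel summation with weight $t^{-d}$ then gives
\[
\sum_{q\le X}q^{-d}S_q(0)=X^{-d}T(X)+d\int_1^X T(t)\,t^{-d-1}\,dt.
\]

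In the square case ($\eta(0)=1$), substitute and separate: the main-term part of $T$ contributes $\sigma^*(A)\int_1^X t^{-1}\,dt=\sigma^*(A)\log X$; the remainder $O_\eps(t^{7/2+\eps})$ is integrated against $t^{-d-1}$, and since $7/2+\eps-(d+1)=-3/2+\eps<0$ this integral converges absolutely on $[1,\infty)$, giving a constant plus an $O_\eps(X^{-1/2+\eps})$ tail. The boundary term $X^{-d}T(X)$ likewise yields $\sigma^*(A)/d+O_\eps(X^{-1/2+\eps})$, and all constants are lumped into $\hat C_A$. In the non-square case ($\eta(0)=0$), only the bound $T(t)=O_\eps(t^{7/2+\eps})$ remains; both the boundary term and the tail of $\int T(t)\,t^{-d-1}\,dt$ are $O_\eps(X^{-1/2+\eps})$, so $\sum_{q\le X}q^{-d}S_q(0)=C+O_\eps(X^{-1/2+\eps})$ for some absolute constant $C$. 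Invoking the multiplicativity $S_{qq'}(0)=S_q(0)S_{q'}(0)$ when $(q,q')=1$ (established inside the proof of Lemma~\ref{l:31HB}), this $C$ formally equals the Euler product $\prod_p\sigma_p(A,0)$.

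The main obstacle is matching this formal Euler product with $L(1,\chi)\prod_p(1-\chi(p)p^{-1})\sigma_p(A,0)$ in the non-square case. The key ingredient is a local Gauss-sum evaluation of $S_{p^l}(0;A,0)$ at each good prime $p\nmid 2\det A$ in dimension $d=4$, which produces a factorisation of the shape $\sigma_p(A,0)=(1-\chi(p)p^{-1})^{-1}\bigl(1+O(p^{-2})\bigr)$; then the corrected product $\prod_p(1-\chi(p)p^{-1})\sigma_p(A,0)$ is absolutely convergent, and the formal identity $\prod_p\sigma_p=L(1,\chi)\cdot\prod_p(1-\chi(p)p^{-1})\sigma_p$ together with the Euler product $L(1,\chi)=\prod_p(1-\chi(p)p^{-1})^{-1}$ gives the claim. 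The delicate point is that in the non-square case $\prod_p\sigma_p$ is only \emph{conditionally} convergent, so the above rearrangement has to be legitimised; I would do this by introducing the auxiliary Dirichlet series $\sum_q q^{-d-s}S_q(0)$ for $\Re s>0$, where absolute convergence of the Euler product permits the factorisation as $L(1+s,\chi)\cdot\prod_p(1-\chi(p)p^{-1-s})\sigma_{p}(A,0)$ with an absolutely convergent product, and then letting $s\to 0^+$ by an Abel-continuity argument using the $O_\eps(X^{-1/2+\eps})$ error already derived. The finitely many bad primes $p\mid 2\det A$ contribute explicit finite local factors handled separately, with an explicit template for this local analysis provided in Appendix~B in the diagonal case $F=\sum_i x_iy_i$.
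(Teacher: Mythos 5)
The paper itself does not prove this lemma: it is explicitly stated ``without a proof,'' being taken verbatim from Lemma~31 of \cite{HB} (and the paper's proof of Theorem~\ref{th:3-4} openly relies on it as an imported result). So there is no internal proof to compare against, and your attempt is essentially a reconstruction of Heath-Brown's original argument. The scaffolding you set up is sound and does match the strategy used in \cite{HB}: Abel summation of $q^{-d}$ against the cumulative sums $T(X)=\sum_{q\le X}S_q(0)$, combined with Lemma~\ref{l:30} at $\vc=0$, correctly produces $\sigma^*(A)\log X+\mathrm{const}+O_\eps(X^{-1/2+\eps})$ in the square case and $C+O_\eps(X^{-1/2+\eps})$ in the non-square case (with $\alpha=7/2$, $d=4$, both errors being $O_\eps(X^{\alpha-d+\eps})$).

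The genuine gap lies in the arithmetic identification of the constant in the non-square case. You reduce it to (i) a local Gauss-sum evaluation giving $\sigma_p(A,0)=(1-\chi(p)p^{-1})^{-1}(1+O(p^{-2}))$ at good primes, and (ii) an Abelian limit argument via the auxiliary Dirichlet series $\sum_q q^{-d-s}S_q(0)$. Item (i) is precisely the arithmetic content of \cite[Lemmas~26--29]{HB} and is the bulk of the work: it requires evaluating $S_{p^l}(0)$ for \emph{all} $l\ge1$ (not only $l=1$) and handling the ramified primes $p\mid 2\det A$ as well; asserting a ``factorisation of the shape\dots'' leaves the proof open. Item (ii) is fine in outline but needs a precise Abelian theorem invoked, and your displayed factorisation $L(1+s,\chi)\prod_p(1-\chi(p)p^{-1-s})\sigma_p(A,0)$ should have the $s$-deformed local factor $\sigma_p(s)=\sum_l p^{-(d+s)l}S_{p^l}(0)$ in place of $\sigma_p(A,0)$; otherwise the Euler products do not match. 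Finally, your appeal to Appendix~B as a ``template'' for the local analysis is not quite apt: the hyperbolic form $F=\sum x_iy_i$ there has square determinant, so $\eta(0)=1$ and $\chi$ is trivial, which is the opposite case to the one causing trouble. In short, the outer structure is correct but the lemma's substance---the explicit local computations---is deferred rather than established.
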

  We will also need the following construction.
  Let us define for $r\in\R_{>0}$
\begin{equation}\lbl{eq:I*}
I^*(r) := \tilde I_{rL}(0) =\int_{\R^{d}} w(\z)\,
h\left(r, { 
F^0}(\z)\right)
\,d\z\, .
\end{equation}
Consider a function $K(\rho;w,A)$, $\rho\in\R_{>0}$, given by
\begin{equation}\lbl{eq:def_K}
K(\rho): =\eta(0)\sigma^*(A)\left( \sigma_\infty(w;A,0)\log \rho +
\int_{\rho}^\infty r^{-1}I^*(r)\,dr\right) +\sigma_\infty(w;A,0){ \hat C_A}  \,,
\end{equation}
where  constant $\eta(0)$ is
defined according to
Lemma~\ref{l:30} and $\hat C_A$~--- according to Lemma~\ref{l:31}. Note that  functions $I^*(r)$ and $K(\rho)$ do not depend on $L$. 

We claim that  function $K(\rho)$, $\rho>0$, can be extended at $\rho=0$ by continuity. Indeed, for $0<\rho_1<\rho_2\le 1$
\be\lbl{K1-K2}
K(\rho_2)-K(\rho_1)= \eta(0)\sigma^*(A)\left(
\sigma_\infty(w;A,0)\log(\rho_2/\rho_1) - \int_{\rho_1}^{\rho_2}
r^{-1}I^*(r)\,dr\right).
\ee
Using that $I^*(r)=L^{-d} I_{rL}(0)$ (see \eqref{Iq-t_Iq}), we write the term $I^*(r)$ from \eqref{K1-K2}  in the
 form, given by  Proposition~\ref{l:I_q(0)=}\,b).  Then $I^*(r)$ takes the form of the r.h.s. of \eqref{I_q(0)=-4}, divided by $L^d$, with $q=rL$. The leading term in the obtained formula for $I^*(r)$ is $\sigma_\infty(w;A,0)$ and the corresponding integral $\int_{\rho_1}^{\rho_2} r^{-1}\sigma_\infty\,dr$ in \eqref{K1-K2}
   cancels the first term in the brackets of \eqref{K1-K2}. Then, setting $M=d/2-1,$ $\beta=r^{\bar \gamma}$,
   $\bar\gamma =\gamma/d$ and
   $0< \gamma<1$ in the just discussed formula for $I^*(r)$, obtained from \eqref{I_q(0)=-4},  we get the estimate 
\begin{equation*}
  \begin{split}
{   |}K(\rho_2)-K(\rho_1){   |}
\lappr&_{N} { \|w\|_{d/2-1,d+1}}\int_{\rho_1}^{\rho_2}
\left(r^{d/2(1-\bar \gamma)-2}\lan \log r\ran+ r^{N-2}+r^{\bar \gamma N-2}\right)dr  \\
\lappr&_\gamma \, \rho_2^{d/2-1-\gamma}{ \|w\|_{d/2-1,d+1}}  \,.
\end{split}\end{equation*}
The last inequality here is obtained by  choosing $N=N(\gamma)$ to be sufficiently large and writing 
$r^{d/2(1-\bar \gamma)-2}\lan \log r\ran \lappr_\gamma r^{d/2(1-\bar \gamma)-2 - \gamma/2} = r^{d/2 -2 - \gamma}.$
Therefore $K(\rho)$ extends at $\rho=0$ by continuity and
\begin{equation}\lbl{eq:K(rho)}
 \left| K(\rho)- K(0)\right|\lappr_{\gamma}\rho^{d/2-1-\gamma}
 { \|w\|_{d/2-1,d+1}}   \,,
\end{equation}
so the  function $K$ is $(d/2-1-\gamma)$-H\"older continuous at zero, for any $\gamma>0$. 
\subsection{Estimate for $J_0$}

Argument in this section is related to Section~13 of \cite{HB}. Here we restrict ourselves for the case when the determinant $\det A$ is a square of an integer, so in particular $\eta(0)=1$. 
We use this specification only in the proof of Lemma~\ref{l:I_B-4}, when applying Lemma~\ref{l:31}. The case of non-square determinant is easier and can be obtained similarly,  using the second assertion of Lemma~\ref{l:31}.

\begin{proposition}\lbl{l:n(0)-4} Assume that the determinant $\det A$ is a square of an integer. Then for any $0<\eps<1/5$,
  \begin{equation*}
    \begin{split}
	J_0= &\sigma_\infty(w;A,0)\sigma^*(A)L^d\log L + K(0;w,A)
        L^d \\
        &+ O_{\eps}(L^{d-\eps}
        \left(\|w\|_{d/2-1,d-1}+\|w\|_{0,d+1}\right)).
    \end{split}
  \end{equation*}
\end{proposition}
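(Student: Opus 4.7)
Following the scheme of Proposition~\ref{l:n(0)}, split $J_0 = J_0^- + J_0^+$ via \eqref{eq:n(0)=} with the cutoff $\rho = L^{-\gamma_2}$ for a small $\gamma_2 \in (0,1)$ to be fixed later. Two new features distinguish $d=4$ from $d \ge 5$: the series $\sum q^{-d} S_q(0)$ is no longer summable, and Lemma~\ref{l:31} instead produces a $\sigma^*(A)\log X$ main term; and the large-$q$ tail $J_0^+$ cannot be discarded, since it supplies the integral appearing in \eqref{eq:def_K}. The strategy is to extract $L^d \sigma_\infty(w)\sigma^*(A)\log L$ from $J_0^-$, to extract $\sigma^*(A)\int_\rho^\infty r^{-1} I^*(r)\,dr \cdot L^d$ from $J_0^+$ by Abel summation, to assemble the remaining $\rho$-dependent pieces into $K(\rho) L^d$, and finally to replace $K(\rho)$ by $K(0)$ using \eqref{eq:K(rho)}.

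For $J_0^-$, apply Proposition~\ref{l:I_q(0)=}\,b) with $M = d/2-1 = 1$ to write $I_q(0) = L^d \sigma_\infty(w) + R_q$, and use Lemma~\ref{l:31}:
$$\sum_{q \le \rho L} q^{-d} S_q(0) = \sigma^*(A)(\log L + \log \rho) + \hat C_A + O_\eps\!\big((\rho L)^{\alpha+\eps-d}\big).$$
The residual $\sum_{q\le\rho L} q^{-d} S_q(0)\,R_q$ is bounded using $|S_q(0)| \lappr q^{d/2+1}$ (Lemma~\ref{l:25HB}) and the explicit two-term form of $R_q$ from \eqref{I_q(0)=-4}; its dominant contribution is of order $\beta^{-2}\rho L^d\lan \log L\ran \|w\|_{1,d+1}$, which is kept below $L^{d-\eps}$ by a suitable choice of $\beta \le 1$, the remaining tails being made negligible by taking $N$ large.

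For $J_0^+$, rewrite $I_q(0) = L^d I^*(q/L)$ (see \eqref{Iq-t_Iq}, \eqref{eq:I*}) and Abel-summate against $A(X) := \sum_{q\le X} q^{-d} S_q(0)$:
$$\frac{J_0^+}{L^d} = -A(\rho L)\,I^*(\rho) - \int_\rho^\infty A(rL)\,\p_r I^*(r)\,dr.$$
The boundary term at infinity vanishes because Lemma~\ref{l:4}\,(1,3) confines the support of $h(r,\cdot)$ to $\{|F(z)|\ge r/2\}$ for $r \ge 1$, giving $I^*(r)\lappr r^{-3/2}\|w\|_{0,d+1}$, which beats the $\log$-growth of $A$. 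Substituting $A(rL) = \sigma^*(A)(\log L + \log r) + \hat C_A + E(rL)$ with $|E(rL)| \lappr_\eps (rL)^{-1/2+\eps}$, and integrating the $\log r$ piece by parts against $dI^*(r)$, all the $\log L$, $\log\rho$ and $\hat C_A$ boundary contributions collapse against $-A(\rho L)I^*(\rho)$, leaving
$$\frac{J_0^+}{L^d} = \sigma^*(A)\int_\rho^\infty \frac{I^*(r)}{r}\,dr \;-\; E(\rho L)\,I^*(\rho) \;-\; \int_\rho^\infty E(rL)\,\p_r I^*(r)\,dr.$$

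Summing $J_0 = J_0^- + J_0^+$, the coefficient of $\log L$ is precisely $\sigma_\infty(w)\sigma^*(A)L^d$, while the remaining $\rho$-dependent pieces assemble (via \eqref{eq:def_K} with $\eta(0)=1$) into $K(\rho) L^d$; the H\"older bound \eqref{eq:K(rho)} then replaces $K(\rho)$ by $K(0)$ at cost $O_\gamma(L^{d-\gamma_2(1-\gamma)}\|w\|_{d/2-1,d+1})$. Besides this H\"older error, the principal contributions are the Lemma~\ref{l:31} remainder (order $L^{d-(1-\gamma_2)(1/2-\eps)}$), the $R_q$-residual controlled above, and $\int_\rho^\infty E(rL)\,\p_r I^*(r)\,dr$; estimating the last uses the crude bound $|\p_r I^*(r)| \lappr r^{-2}\|w\|_{0,d+1}$ obtained from Lemma~\ref{l:5} (with $N=0$) together with \eqref{yyes}, and yields a contribution of order $L^{d-(1/2-3\gamma_2/2)}$. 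Balancing the H\"older exponent $\gamma_2(1-\gamma)$ against $1/2 - 3\gamma_2/2$ leads to the optimal choice $\gamma_2 \approx 1/5$ and hence the threshold $\eps < 1/5$ stated in the proposition. The main obstacle is therefore the Abel-summation step for $J_0^+$: it forces one to simultaneously control $\p_r I^*(r)$, to verify convergence of $\int_\rho^\infty r^{-1} I^*(r)\,dr$ at both endpoints (as in the extension of $K$ to $\rho=0$ performed in Section~\ref{sec:app1}), and to absorb the singular integral of $E(rL)$ against $dI^*(r)$.
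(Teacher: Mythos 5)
Your proposal is correct and arrives at the stated asymptotics with the same threshold $\eps<1/5$, but it handles the tail $J_0^+$ by a genuinely different (and somewhat slicker) mechanism than the paper's Lemma~\ref{l:I_A-4}. The paper performs discrete summation by parts block-by-block on dyadic ranges $(R_l,2R_l]$ with $f_q=q^{-d}I_q$ and $g_q=\sum_{R_l<q'\le q}S_{q'}$, invokes Lemma~\ref{l:30} ($\sum S_q\sim\sigma^*\sum q^{d-1}+O(R^{\alpha+\gamma})$) to replace the inner partial sums, recovers $\sigma^*\sum_{q>\rho L}q^{-1}I_q$ by reversing the summation by parts, and only at the very end compares this discrete sum with the integral $\int_{\rho}^\infty r^{-1}I^*(r)\,dr$ via an Euler--Maclaurin argument that isolates the error $\rho^{-2}L^{d-1}|w|_{L_1}$. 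You instead apply Riemann--Stieltjes integration by parts in one shot against the full partial-sum function $A(X)=\sum_{q\le X}q^{-d}S_q$ and feed in Lemma~\ref{l:31} directly (including the constant $\hat C_A$), so the $\log L$, $\log\rho$ and $\hat C_A$ boundary pieces cancel algebraically against the corresponding pieces coming from $J_0^-$, and the Heath--Brown error shows up once as $E(rL)=O_\eps((rL)^{-1/2+\eps})$. Both routes produce identical exponents (the $\rho^{-3/2}L^{7/2}$-type error is the Lemma~\ref{l:30} error in the paper and the $\int E\,\p_r I^*$ error in yours), and the balance $\gamma_2\approx1/5$ with $M=d/2-1$ and small H\"older loss $\gamma$ gives the claimed threshold in both. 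Two small points worth flagging in your version: (a) Lemma~\ref{l:31} is stated for integer $X$, so $A(rL)$ should be read as $A(\lfloor rL\rfloor)$, with the replacement $\log\lfloor rL\rfloor=\log(rL)+O((rL)^{-1})$ absorbed into $E$; and (b) the Riemann--Stieltjes identity $J_0^+/L^d=-A(\rho L)I^*(\rho)-\int_\rho^\infty A(rL)\,\p_r I^*(r)\,dr$ is exact (not just approximate), since $I^*$ is $C^1$ and the step function $\tilde A(r)=A(\lfloor rL\rfloor)$ has no discontinuities in common with it, so your formula does not in fact carry any hidden discretisation error --- this is cleaner than your closing remark suggests. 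On the norm appearing in the remainder, the proof (like the paper's Lemma~\ref{l:I_B-4}) naturally produces $\|w\|_{d/2-1,d+1}$; the statement's $\|w\|_{d/2-1,d-1}$ appears to be a harmless typographical slip in the paper.
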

{\it Proof.}
 To establish Proposition~\ref{l:n(0)-4} we write
  $J_0$ in the form 
\eqref{eq:n(0)=}, $J_0=J^+_0+J^-_0$, where
$$
J^+_0:=\sum_{q>\rho L}  q^{-d}
S_q(0) I_q(0)\,\qmb{and}\qu J_0^-:=\sum_{q\leq \rho L} q^{-d}
S_q(0) I_q(0)\,,
$$
with $\rho\leq 1$.
Then the assertion   follows from Lemmas~\ref{l:I_A-4} and
\ref{l:I_B-4} below.
Recall that $\alpha=7/2$.

\begin{lemma}\lbl{l:I_A-4}
  Let $w\in L_1(\R^{d})$.  Then for any $\gamma>0$, any $\rho\le 1$ and $L$ satisfying $\rho L>1$,
  $$
  \left|J_0^+-L^d\eta(0)\sigma^*(A)\int_{\rho}^\infty r^{-1}I^*(r)\,dr\right|\lappr_{   \gamma}
 (\rho^{\alpha+\gamma-d-1} L^{\alpha+\gamma} + \rho^{-2}L^{d-1})|w|_{L_1}  \,. 
  $$
\end{lemma}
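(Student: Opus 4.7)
The plan is to apply Abel summation to $J_0^+=\sum_{q>\rho L}G(q)S_q(0)$, where $G(q):=q^{-d}I_q(0)=L^dq^{-d}I^*(q/L)$ (and $G$ extends naturally to a smooth function on $\R_{>0}$). Set $X:=\lfloor\rho L\rfloor\ge 1$ and $T(q):=\sum_{n\le q}S_n(0)$. Corollary~\ref{c:h} yields $|I^*(r)|\lappr r^{-1}|w|_{L_1}$, so $|T(Y)G(Y)|\lappr L^{d+1}|w|_{L_1}Y^{-1}\to 0$ as $Y\to\infty$, and Abel summation gives
$$J_0^+=-T(X)G(X+1)+\sum_{q>X}T(q)(G(q)-G(q+1)).$$
Using Lemma~\ref{l:30} I would decompose $T(q)=\eta(0)\sigma^*(A)\tilde T(q)+R(q)$, where $\tilde T(q):=\sum_{n\le q}n^{d-1}$ and $|R(q)|\lappr_\gamma q^{\alpha+\gamma}$, splitting $J_0^+=M+E$ accordingly. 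Reversing partial summation on the main piece yields
$$M=\eta(0)\sigma^*(A)\sum_{q>X}q^{d-1}G(q)=\eta(0)\sigma^*(A)L^d\sum_{q>X}q^{-1}I^*(q/L).$$

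I would then compare $M$ with the target $\eta(0)\sigma^*(A)L^d\int_\rho^\infty r^{-1}I^*(r)\,dr$ by a standard Euler--Maclaurin estimate: after the change of variable $r=t/L$, the sum--integral difference for $\phi(t):=t^{-1}I^*(t/L)$ is controlled by $|\phi(X)|+\int_X^\infty|\phi'(t)|\,dt$. Lemma~\ref{l:5} (with $m=1,n=0$) gives $|I^{*\prime}(r)|\lappr r^{-2}|w|_{L_1}$, which together with the bound on $I^*$ yields $|\phi(t)|\lappr Lt^{-2}|w|_{L_1}$ and $|\phi'(t)|\lappr Lt^{-3}|w|_{L_1}$. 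Hence the sum--integral error is of order $\rho^{-2}L^{-1}|w|_{L_1}$, and multiplying by $L^d$ produces exactly the second term $\rho^{-2}L^{d-1}|w|_{L_1}$ in the claim.

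The error contribution $E=-R(X)G(X+1)+\sum_{q>X}R(q)(G(q)-G(q+1))$ I would bound directly from the two pointwise estimates $|G(X+1)|\lappr \rho^{-d-1}|w|_{L_1}$ (using $|I^*(r)|\lappr r^{-1}|w|_{L_1}$) and $|G(q)-G(q+1)|\lappr L^{d+1}q^{-d-2}|w|_{L_1}$ (mean value theorem plus the bounds on $I^*$ and $I^{*\prime}$). The boundary term is then $\lappr\rho^{\alpha+\gamma-d-1}L^{\alpha+\gamma}|w|_{L_1}$, and the series becomes
$$\Big|\sum_{q>X}R(q)(G(q)-G(q+1))\Big|\lappr_\gamma L^{d+1}|w|_{L_1}\sum_{q>X}q^{\alpha+\gamma-d-2}\lappr_\gamma \rho^{\alpha+\gamma-d-1}L^{\alpha+\gamma}|w|_{L_1},$$
the summation converging since $\alpha+\gamma-d-2=-5/2+\gamma<-1$ for $d=4$, $\alpha=7/2$ and small $\gamma$. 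Summing all contributions gives the claimed bound.

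The main potential obstacle is making the Euler--Maclaurin comparison precise with only a single derivative of $I^*$ at our disposal; however, the $r^{-1}$ decay of $I^*$ and $r^{-2}$ decay of $I^{*\prime}$ (supplied by Corollary~\ref{c:h} and Lemma~\ref{l:5}) are exactly strong enough. The rest is bookkeeping of exponents.
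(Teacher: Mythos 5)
Your proof is correct and follows essentially the same strategy as the paper's: Abel summation for $J_0^+$, Lemma~\ref{l:30} to extract the main term from the partial sums $\sum_{q\le X}S_q(0)$, the pointwise bounds $|I^*(r)|\lappr r^{-1}|w|_{L_1}$ and $|I^{*\prime}(r)|\lappr r^{-2}|w|_{L_1}$ (from Corollary~\ref{c:h} and Lemma~\ref{l:5}), and a sum--integral comparison at the end. The one organizational difference is bookkeeping: the paper runs the summation-by-parts over dyadic blocks $[R_l,R_{l+1}]$ with $R_l=\lfloor 2^l\rho L\rfloor$, applying Lemma~\ref{l:30} on each block (where the error $O(R^{\alpha+\gamma})$ is small relative to the block's main term) and summing a geometric series; you instead do a single Abel summation over the whole tail $q>X$ and split $T(q)=\eta(0)\sigma^*(A)\tilde T(q)+R(q)$ with $|R(q)|\lappr_\gamma q^{\alpha+\gamma}$, reversing partial summation on the $\tilde T$ piece. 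The two routes are equivalent in substance and give the same exponents; your version is arguably a little more streamlined, while the paper's dyadic decomposition makes the ``linear functional'' substitution step visually explicit. One small point worth noting explicitly: your convergence of $\sum_{q>X}q^{\alpha+\gamma-d-2}$ requires $\gamma<d+1-\alpha=3/2$, but since $\rho L>1$ the right-hand side $(\rho L)^{\alpha+\gamma-d-1}L^{d+1}$ is monotone increasing in $\gamma$, so establishing the bound for small $\gamma$ suffices for all $\gamma>0$ (the paper's dyadic version has the same implicit restriction, so this is not a gap).
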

{\it Proof.} To simplify the notation, in this proof we denote $I_q:=I_q(0)$ and $S_q:=S_q(0)$.
Let us recall the summation by parts formula for sequences $(f_q)$ and $(g_q)$:
$$
\sum_{m<q\leq n} f_q (g_q-g_{q-1}) = f_ng_n-f_{m+1}g_{m} - \sum_{m<q<n} (f_{q+1}-f_q)g_q.
$$
We take arbitrary $R\in\N$ and apply the latter with $m=R$, $n=2R$, $f_q=q^{-d}I_q$ and $g_q=\sum_{R<
	q'\leq q} S_{q'}$, so that $g_R=0$ and $S_q=g_q-g_{q-1}$ for $q>R$.
We find
\begin{equation}\label{eq:sum_parts}
  \begin{split}
\sum_{R< q\le 2R} q^{-d}S_qI_q=& (2R)^{-d}I_{2R} \sum_{R<
  q\leq 2R} S_q\\
&- \sum_{R< q< 2R}\tilde\partial_q(q^{-d}I_q)
\sum_{R< q'\le q} S_{q'}  \,,
  \end{split}
\end{equation}
where for a sequence $(a_q)$ we denote $\tilde\partial_q a_q:=a_{q+1}-a_q$.
By \eqref{Iq-t_Iq}--\eqref{tilde_I_q},
$$
I_q=L^{d}\int_{\R^{d}} w(\z) h(q/L, { F^0}(\z))\,d\z\,.
$$
So,
\be\lbl{I_q-ests}
|I_q|\lappr \fr{L^{d+1}}{q}  |w|_{L_1} \qquad\mbox{and}\qquad
|\partial_q I_q|\lappr \fr{L^{d+1}}{q^2}  |w|_{L_1}  \,,
\ee
where the first estimate above follows from Corollary~\ref{c:h} while the second one~--- from  Lemma~\ref{l:5} with $m=1, n=N=0$.
  Then, $|\tilde\partial_q(q^{-d}I_q)|\lappr L^{d+1}q^{-d-2}  |w|_{L_1}$.
According to \eqref{eq:30} with $\eps$ replaced by $\gamma$, for $R'\leq 2R$
\be\lbl{SRR'}
\sum_{R< q\le R'} S_{q} =  \eta(0)\sigma^*(A)\sum_{R< q\le R'}q^{d-1} +
O_{\gamma}(R^{\alpha+\gamma})  \,,
\ee
where we recall that $\sigma_{\mathbf 0}^*(A)=\sigma^*(A)$. 
Let us view the r.h.s. of \eqref{eq:sum_parts} as a linear functional $G\big((S_q)\big)$ on the space of 
 sequences $(S_q)$. Then, inserting formula \eqref{SRR'} in the r.h.s. of \eqref{eq:sum_parts}, we get
\begin{equation}\lbl{SqIq}
\begin{split}
\sum_{R< q\le 2R}& q^{-d}S_qI_q  = \eta(0)\sigma^*(A) G\big((q^{d-1})\big)
\\&+ O_\gamma\Big( 
L^{d+1}|w|_{L_1}\big(R^{-d-1+\alpha +\gamma} + \sum_{R< q\le 2R}
q^{-d-2+\alpha+\gamma}\big)\Big),
\end{split}
\end{equation}
where the $O_\gamma$ term is obtained by applying \eqref{I_q-ests} together with the estimate for $\tilde\partial_q(q^{-d}I_q)$ above and replacing the sums $\sum S_q, \, \sum S_{q'}$ in the r.h.s. of \eqref{eq:sum_parts} by $O_{\gamma}(R^{\alpha+\gamma})$.
According to the summation by parts formula \eqref{eq:sum_parts} with $S_q$ replaced by $q^{d-1}$, we have 
$\sum_{R< q\le 2R}
q^{-d}q^{d-1}I_q=G\big((q^{d-1})\big).$
Thus, by \eqref{SqIq},
\be\non
\sum_{R< q\le 2R} q^{-d}S_qI_q= \eta(0)\sigma^*(A)\sum_{R< q\le 2R}
q^{-1}I_q +O_\gamma\left(L^{d+1}R^{-d-1+\alpha +\gamma}
|w|_{L_1}\right)  .
\ee
Then,
setting $R_l=\lfloor 2^l\rho L \rfloor$ we get
\begin{equation*}
  \begin{split}
J_0^+    =& \sum_{l=0}^\infty\sum_{R_l<
  q \le R_{l+1}}q^{-d}I_qS_q \\
 = & \eta(0)\sigma^*(A)\sum_{q> \rho L} q^{-1}I_q + O_\gamma\left(
 \rho^{\alpha+\gamma-d-1}L^{\alpha+\gamma}|w|_{L_1}\sum_{l=0}^\infty
    2^{-l(d+1-\alpha-\gamma)}\right)\\
    = & \eta(0)\sigma^*(A)\sum_{q> \rho L} q^{-1}I_q
    +O_\gamma\left(\rho^{\alpha+\gamma-d-1}L^{\alpha+\gamma}|w|_{L_1}\right)  \,.
  \end{split}
\end{equation*}
It remains to compare the sum $A:=\sum_{q> \rho L} q^{-1}I_q$ with the integral
$B:=L^d\int_{\rho}^\infty r^{-1}I^*(r)\,dr$. 
Since $L^dI^*(r)=I_{rL}$, then changing the variable of integration $r$ to $q=rL$, $B$  takes the form
$\int_{\rho L}^\infty q^{-1}I_{q}\,dq.
$
Then, 
\begin{equation}\lbl{int-sum}
|A-B|\leq \Big|\sum_{q> \rho L} q^{-1}I_q - \int_{\lfloor \rho L\rfloor +1 }^\infty q^{-1}I_{q}\,dq\Big| + \Big|\int_{\rho L}^{\lfloor \rho L\rfloor +1 } q^{-1}I_{q}\,dq\Big|.
\end{equation}
Due to \eqref{I_q-ests}, $|q^{-1}I_{q}|\lesssim q^{-2}L^{d+1}|w|_{L^1}$ and
$|\p_q (q^{-1}I_{q})|\lesssim q^{-3}L^{d+1}|w|_{L^1}$. 
Thus,the both terms in the r.h.s. of \eqref{int-sum} are bounded by $(\rho L)^{-2} L^{d+1}|w|_{L^1}=\rho^{-2}L^{d-1}|w|_{L^1}$.
\qed

\smallskip

Recall that $\hat C_A$ is a constant arising in Lemma~\ref{l:31}.
\begin{lemma}\lbl{l:I_B-4} 
	Assume that the determinant $\det A$ is a square of an integer. 
	Then for any $\gamma>0$, $N>1$, any $\rho\le
        1$ and $L$ satisfying $\rho L>1$, 
	\begin{equation*}
          \begin{split}
	    J_0^-=&L^{d}\sigma_\infty(w;A,0)\left(\sigma^*(A)\log(\rho L)
+\hat C_A\right)+ O_{\gamma,N}\Bigl(
              \Bigl(\rho^{\alpha+\gamma-d}L^{\alpha+\gamma} 
            \\
            & +L^d\bigl(
              \rho\log L+\rho^{N-1} +L^{1-d}\bigr)\Bigr){ \|w\|_{d/2-1,d+1}}\Bigr).
	  \end{split}
          \end{equation*}
\end{lemma}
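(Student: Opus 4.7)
The plan is to invoke Proposition \ref{l:I_q(0)=} b) with $M=d/2-1=1$ (admissible since $d=4$) and $\beta=1$ to replace each integral $I_q(0)$ by its principal part $L^d\sigma_\infty(w;A,0)$, then control the main arithmetic sum by Lemma \ref{l:31} and estimate the remainder by Lemma \ref{l:25HB}. Concretely, write $I_q(0)=L^d\sigma_\infty+R_q$, where Proposition \ref{l:I_q(0)=} b) together with the bound $\lan\log(q/L)\ran\lappr\log L$ (valid for $1\le q\le\rho L\le L$) yields
\begin{equation*}
|R_q|\lappr qL^{d-1}\log L\,\|w\|_{1,d+1}+\bigl(q^NL^{d-N}+1\bigr)\bigl(1+Lq^{-1}\bigr)\|w\|_{0,d+1}.
\end{equation*}

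The natural split is then $J_0^-=L^d\sigma_\infty\sum_{q\le\rho L}q^{-d}S_q(0)+\sum_{q\le\rho L}q^{-d}S_q(0)R_q$. For the main sum I would apply Lemma \ref{l:31} at $X=\lfloor\rho L\rfloor$ with $\eps=\gamma$ (permissible since $\rho L>1$), rewriting it as $\sigma^*(A)\log(\rho L)+\hat C_A+O_\gamma((\rho L)^{\alpha+\gamma-d})$. Multiplying by $L^d$ and using $|\sigma_\infty|\lappr\|w\|_{0,d+1}$ (from Theorem \ref{c_integrals}) contributes exactly the $\rho^{\alpha+\gamma-d}L^{\alpha+\gamma}$ term of the claimed error. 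The rounding discrepancy $\log\lfloor\rho L\rfloor-\log(\rho L)=O((\rho L)^{-1})$, after being inflated by $L^d\|w\|_{0,d+1}$, is absorbed into the same term thanks to the elementary inequality $L^{d-1}/\rho\lappr\rho^{\alpha+\gamma-d}L^{\alpha+\gamma}$, which is equivalent to $(\rho L)^{1/2+\gamma}\ge 1$ and hence holds under the standing assumption $\rho L>1$.

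For the remainder sum, Lemma \ref{l:25HB} gives $q^{-d}|S_q(0)|\lappr q^{-d/2+1}=q^{-1}$, so each piece of $R_q$ is handled by a short direct summation: $\sum_{q\le\rho L}q^{-1}\cdot qL^{d-1}\log L\lappr\rho L^d\log L$, matching $L^d\rho\log L$; $\sum_{q\le\rho L}q^{-1}\cdot Lq^{N-1}L^{d-N}\lappr_N\rho^{N-1}L^d$, matching $L^d\rho^{N-1}$; and $\sum_{q\le\rho L}Lq^{-2}\lappr L=L^d\cdot L^{1-d}$, matching the final subterm. The two residual contributions $q^{N-1}L^{d-N}$ and $q^{-1}$, summed over $q\le\rho L$, yield $\rho^NL^d$ and $\log L$, both dominated by the bounds already obtained (using $\rho\le1$ and $\log L\le L$). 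Assembling these pieces gives precisely the error bound in the statement.

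The argument is essentially a calibration exercise: the choice $M=1$ is forced by the constraint $M\le d/2-1$ and the desire to produce $\|w\|_{d/2-1,d+1}$ in the main error, while $\beta=1$ is the simplest choice that makes the logarithmic factor in the Proposition \ref{l:I_q(0)=} b) error controllable and pairs the $\beta^{-M-1}$ prefactor with the favorable sum $\sum_{q\le\rho L}1=\rho L$. The main (minor) subtlety is the verification that the discretization error in invoking Lemma \ref{l:31} at integer cutoff is absorbed by the main error under $\rho L>1$; no new analytic input is needed beyond the three ingredients already cited.
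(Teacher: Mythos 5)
Your proof is correct and follows essentially the same route as the paper's: the same decomposition $J_0^-=L^d\sigma_\infty\sum_{q\le\rho L}q^{-d}S_q(0)+\sum_{q\le\rho L}q^{-d}S_q(0)R_q$, the same use of Proposition~\ref{l:I_q(0)=}\,b) with $M=1$, $\beta=1$, Lemma~\ref{l:31} for the main sum, and Lemma~\ref{l:25HB} for the remainder. Your only addition is the explicit treatment of the integer cutoff $X=\lfloor\rho L\rfloor$ in Lemma~\ref{l:31} (the paper applies it at $\rho L$ without comment), and your verification that the resulting discrepancy is absorbed under $\rho L>1$ is a harmless, correct refinement.
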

{\it Proof.}
Inserting Proposition~\ref{l:I_q(0)=} b) with $M=d/2-1=1$ and $\beta=1$ into the
definition of the term $J_0^-$, we get 
$
J_0^-=I_A+I_B,
$
where
$$
I_A :=L^{d}\sigma_\infty(w)\sum_{q\leq \rho L} q^{-d}S_q(0), 
\qquad
I_B:= \sum_{q\leq 
  \rho L}S_q(0) q^{-d} (f_q+g_q) \,,
$$
with
\begin{align}\non
|f_q|&\lappr q L^{d-1}\left\lan\log(\frac{q
}{L})\right\ran { \|w\|_{d/2-1,d+1}} \,,\\\non
|g_q|&\lappr_{N}\left(q^NL^{d-N}+1\right)
{ Lq^{-1}\|w\|_{0,d+1}}.
\end{align} 
By Lemma~\ref{l:31},
$$\sum_{q\leq \rho L} q^{-d}S_q(0)=\sigma^*(A)\log(\rho L)
+{ \hat C_A} + O_{\gamma}((\rho L)^{\alpha+\gamma-d}).$$ 
So,
$$
I_A=L^{d}\sigma_\infty(w)\left(\sigma^*(A)\log(\rho L)
+\hat C_A\right) + O_{\gamma}(\sigma_\infty(w)
L^{\alpha+\gamma}\rho^{\alpha+\gamma-d})\,,
$$
whereas 
\be\lbl{sig_inf}
|\sigma_\infty(w)|=|\sigma_\infty(w;A,0)|=|\II({ 0})| \le \|\II\|_{0,0}\lappr_A
{ \|w\|_{0,d+1}}
\ee
on account of \eqref{yes}.  As for the term
$I_B$, since $d=4$, Lemma~\ref{l:25HB} implies that
\begin{equation*}
  \begin{split}
|I_B|\lappr\sum_{q\le \rho L}
q^{-1}(|f_q|+|g_q|)
\lappr_{N} L^d\left(\rho\log L +
\rho^{N-1}+L^{1-d}\right){ \|w\|_{d/2-1,d+1}}  \,,
  \end{split}
\end{equation*}

for $N\geq 2$.
The obtained estimates on $I_A$ and $I_B$ imply the assertion.
\qed

\smallskip

Now we conclude the proof of Proposition~\ref{l:n(0)-4}. The leading term of $J_0$ is given by the sum of leading terms from formulas for $J_0^+$ and $J_0^-$ in Lemmas~\ref{l:I_A-4} and \ref{l:I_B-4}. Since $\eta(0)=1$, it takes the form
\be\non
\begin{split}
L^d\sigma^*(A)&\Big(\int_\rho^\infty r^{-1}I^*(r)\,dr + \sigma_\infty(w)\log(\rho L)\Big) + L^d\sigma_\infty(w) 
\hat C_A \\
&=\sigma_\infty(w)\sigma^*(A)  L^d\log L   +K(0) L^{d} 
+ O_\gamma\big(L^{d}\rho^{d/2-1-\gamma}{ \|w\|_{d/2-1,d+1}}\big),
\end{split}
\ee
where in the last equality we used  \eqref{eq:def_K} and \eqref{eq:K(rho)}.
Then we find
\begin{equation*}
  \begin{split}
J_0 =& \sigma_\infty(w)\sigma^*(A)L^d\log L + K(0)
L^d+ O_{\gamma,N}\Bigl(\bigl(\rho^{\alpha+\gamma-d-1}L^{\alpha+\gamma} + \rho^{-2}L^{d-1}\\
&
+
L^d(\rho^{d/2-1-\gamma}+\rho\log
L+\rho^{N-1}+L^{1-d})\bigr){ \|w\|_{d/2-1,d+1}}
\Bigr)  \,,
  \end{split}
\end{equation*}
since $|w|_{L_1}\lappr\|w\|_{0,d+1}$.
We now pick $\rho = L^{-1/5}$ and $N=2$, and, using that $d=4$, get the assertion of proposition.
\qed

\subsection{Estimate for $\sigma_1(w;A,L)$}
\lbl{s:sigma_1}

In this section we get an upper bound for the subleading order term $\sigma_1$ of the asymptotics from Theorem~\ref{th:3-4}.

In the case when the determinant $\det A$ is not 
a square of an integer, $\sigma_1$ is given by \eqref{non-sq} and  the task is not complicated. Indeed, according to Lemma~\ref{l:31}, the product $\prod_p(1-\chi(p)p^{-1})\sigma_p(A,0)$ is finite (and independent from $L$). On the other hand, by \eqref{sig_inf}, 
$|\sigma_\infty(w;A,0)|\lappr \|w\|_{0,d+1}.$
Thus,
\be\non
|\sigma_1(w;A,L)|\lappr \|w\|_{0,d+1}.
\ee

In the case when $\det A$ is a square, $\sigma_1$ is given by  \eqref{def:sigma_1} and the required estimate is less trivial.
\begin{proposition}\lbl{l:sigma_1}
	Assume that $\det A$ is a square of an integer. Then 
	\be\non
	|\sigma_1(w;A,L)|\lappr \|w\|_{\tilde N,\tilde N+3d+4}, \quad\mbox{ where }\quad \tilde N:=d^2(d+3)-2d.
	\ee
\end{proposition}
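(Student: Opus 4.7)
The plan is to decompose
$\sigma_1(w;A,L) = K(0;w,A) + S$
with $S := \sum_{\vc\neq 0}\eta(\vc)\sigma^*_\vc(A)\sigma_\infty^\vc(w;A,L)$, and estimate the two pieces separately.

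For $K(0;w,A)$: The H\"older-type estimate \eqref{eq:K(rho)} at $\rho = 1$ gives $|K(0)|\le|K(1)|+C\|w\|_{d/2-1,d+1}$, so it is enough to bound $K(1)$ directly from \eqref{eq:def_K}. Since $\log 1 = 0$, we only need $|\sigma_\infty(w;A,0)|\lappr \|w\|_{0,d+1}$ (from \eqref{sig_inf}) and an estimate on $\int_1^\infty r^{-1}I^*(r)\,dr$. For the latter, item~1 of Lemma~\ref{l:4} forces $h(r,F^0(\z))$ to vanish for $r\ge 1$ whenever $|F^0(\z)|<r/2$, so on the support of the integrand of $I^*(r)$ defined in \eqref{eq:I*} one has $|\z|\gtrsim \sqrt r$; combined with Corollary~\ref{c:h}, this yields $|I^*(r)|\lappr r^{-3/2}\|w\|_{0,d+1}$, making the integral absolutely convergent. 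Hence $|K(0)|\lappr \|w\|_{d/2-1,d+1}$, which is comfortably dominated by $\|w\|_{\tilde N,\tilde N+3d+4}$.

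For the sum $S$: by Lemma~\ref{l:30}, $|\sigma^*_\vc(A)|\lappr_\epsilon 1+|\vc|^\epsilon$ whenever $\eta(\vc)\neq 0$, so the task reduces to an $L$-uniform estimate on $|\sigma_\infty^\vc(w;A,L)|$ with enough polynomial decay in $|\vc|$ to ensure absolute summability. A useful reformulation: the change of variable $z=Ly$ in \eqref{eq:I'} shows that $\tilde I_q(\vc) = L^{-d}I_q(\vc) = \int w(y)h(q/L,F(y))e(-y\vc L/q)\,dy$ depends on $L$ only through the ratio $q/L$, so $\sigma_\infty^\vc = \sum_q q^{-1}\tilde I_q(\vc)$ is a Riemann-type sum for an $L$-independent integral in the variable $r=q/L$. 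Concretely I would split at $q=L$: for $q>L$, Proposition~\ref{l:19} and $\sum_{q>L}L/q^2\lappr 1$ yield a uniform bound $\sum_{q>L}q^{-1}|\tilde I_q(\vc)|\lappr |\vc|^{-N}\|w\|_{N,2N+d+1}$; for $q\le L$, I would adapt the stationary-phase analysis of Section~\ref{subsec:6.1}, retaining the $|\vu|=|\vc|L/q$ dependence in Lemmas~\ref{l:good_set} and~\ref{l:bad_set}, with the choice $R=|\vu|^{\gamma_1/(2d)}$, $\beta=\gamma_1/2$. The precise choice $\gamma_1 = 1/(d+3)$ makes $\bar N(d,\gamma_1) = \lceil d^2/\gamma_1\rceil - 2d$ coincide with $\tilde N = d^2(d+3) - 2d$ and matches the derivative/weight exponents $(\tilde N,\tilde N+3d+4)$ in the final bound.

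The main obstacle is simultaneously securing $L$-uniformity and enough polynomial $|\vc|$-decay to beat the growth of $|\sigma^*_\vc|$ over $\vc\in\Z^d$, particularly on the (possibly dense) set $\{\vc\cdot A^{-1}\vc=0\}$ where $\eta(\vc)\neq 0$ and stationary points of the oscillatory phase on $\Sigma_0$ accumulate along an entire line. A naive use of Proposition~\ref{l:19} on the small-$q$ range carries an irremovable factor $L/q$ whose sum is of order $L$, while a naive use of Lemma~\ref{l:22} alone yields only decay of the order $|\vc|^{-(d/2-1-\beta)}$. The resolution rests on joining the two estimates precisely at the threshold $q=L$ and at the balance $\gamma_1=1/(d+3)$: this is the unique choice for which the $L$-factors from the two halves cancel, the required derivative count $\bar N$ equals $\tilde N$, and the resulting $|\vc|$-decay, together with the cardinality of the isotropic integer points, produces an absolutely convergent sum bounded by $\|w\|_{\tilde N,\tilde N+3d+4}$.
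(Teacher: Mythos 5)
Your decomposition $\sigma_1 = K(0) + \sum_{\vc\neq 0}\eta(\vc)\sigma^*_\vc\sigma_\infty^\vc$ and the treatment of $K(0)$ via $\eqref{eq:K(rho)}$ at $\rho=1$ and a direct bound on $K(1)$ match the paper (your $r^{-3/2}$ bound on $I^*(r)$ via Lemma~\ref{l:4} is valid but overkill; the paper just uses $|I^*(r)|\lappr r^{-1}|w|_{L_1}$ from Corollary~\ref{c:h}, which already gives a convergent $\int_1^\infty r^{-2}\,dr$). You also correctly identify that the refined Lemma~\ref{l:22,d=4} --- the version of Lemma~\ref{l:22} retaining the $|\vu|$-dependence --- is the right tool for the small-$q$ range. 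But your split point $q=L$ does not work, and this is a genuine gap.

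Concretely, with the refined bound $|I_q(\vc)| \lappr_\gamma L^{d/2+1+\gamma}q^{d/2-1-\gamma}|\vc|^{-d/2+1+\gamma}$ and $d=4$, summing over the range $q\le L$ gives
\begin{equation*}
L^{-d}\sum_{q\le L} q^{-1}|I_q(\vc)| \lappr L^{-4}\cdot L^{3+\gamma}\,|\vc|^{-1+\gamma}\sum_{q\le L}q^{-\gamma}\lappr |\vc|^{-1+\gamma},
\end{equation*}
since $\sum_{q\le L}q^{-\gamma}\sim L^{1-\gamma}$. The resulting $|\vc|^{-1+\gamma}$ decay is nowhere near absolutely summable over $\vc\in\Z^4$, nor even over the set of isotropic $\vc$ (which has $\sim R^2\log R$ points with $|\vc|\le R$, so the sum $\sum_{\vc}|\vc|^{-1+\gamma}$ still diverges). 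The $L$-factors do cancel at $q=L$, as you observe, but $L$-uniformity alone is not enough: the split point must be chosen to convert the excess summation length into $|\vc|$-decay. The paper splits instead at $q\le L|\vc|^{-M}$ with $M=d$; then $\sum_{q\le L|\vc|^{-M}}q^{-\gamma}\sim (L|\vc|^{-M})^{1-\gamma}$ contributes an extra factor $|\vc|^{-M(1-\gamma)}$, yielding $|Y_1|\lappr |\vc|^{-(M+1)(1-\gamma)}$; on the complementary range $q>L|\vc|^{-M}$ Proposition~\ref{l:19} gives $|Y_2|\lappr|\vc|^{-N+M}$. The choices $M=d$, $N=2d+1$, $\gamma=1/(d+3)$ then make both exponents exceed $d$, so the series in $\vc$ converges even against the $|\vc|^\eps$ growth of $\sigma^*_\vc$ from Lemma~\ref{l:30}, and one reads off $\tilde N=\bar N(\gamma)$ and the weight exponent $\tilde N+3d+4$ exactly as you predicted. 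So your reading of the parameter balance is right but the $\vc$-dependent cut is the missing idea.
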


{\it Proof.}
Since $\eta(\vc)$ takes values $0$ or $1$, then according to the definition \eqref{def:sigma_1} of $\sigma_1$, we have
\be\lbl{s_1-start}
|\sigma_1(w)|\leq |K(0)| + \sum_{\vc \ne 0:\,\eta(\vc)=1} |\sigma_\vc^*(A) \sigma_\infty^\vc(w)|.
\ee
Let us first estimate the term $K(0)$. According to \eqref{eq:K(rho)}, 
\be\lbl{K(1)-K(0)}
|K(1)-K(0)|\lappr {  \|w\|_{d/2-1,d+1}}.
\ee
On the other hand, $\sigma^*(A)$ is independent from $L$ and, in view of Lemma~\ref{l:31} is finite. Then,
by the definition \eqref{eq:def_K} of $K(\rho)$, 
$$
|K(1)|\lappr \int_1^\infty r^{-1} |I^*(r)|\, dr + |\sigma_\infty(w;A,0)\hat C_A|.
$$
Due to the definition \eqref{eq:I*} of the integral $I^*(r)$ and Corollary~\ref{c:h}, $|I^*(r)|\lappr r^{-1}|w|_{L_1}\lappr r^{-1}\|w\|_{0,d+1}$.
Then,  in view of \eqref{sig_inf}, $|K(1)|\lappr \|w\|_{0,d+1}$, so that, by \eqref{K(1)-K(0)}, 
\be\lbl{K(0)-est}
|K(0)|\lappr { \|w\|_{d/2-1,d+1}}.
\ee

Let us now estimate the terms $\sigma_\infty^\vc(w)$, which are given by \eqref{sigma_infty^c}:
$$
\sigma_\infty^\vc(w)=L^{-d}\sum_{q=1}^\infty q^{-1} I_q(\vc;A,0,L)=Y_1(\vc) + Y_2(\vc),
$$
where $Y_1=   L^{-d}
\sum_{q=1}^{L|\vc|^{-M}}q^{-1}I_q(\vc)$, $Y_2= L^{-d}
\sum_{q>L|\vc|^{-M}}q^{-1}I_q(\vc)$ and $M\in\N$ will be chosen later.
Using that $d=4$, according to Lemma~\ref{l:22},
\be\non
|Y_1(\vc)|\lappr_\ga L^{-1+\ga}|\vc|^{-1+\ga}C(w) \sum_{q=1}^{L|\vc|^{-M}} q^{-\ga}\lappr|\vc|^{-(1-\ga)(M+1)}C(w),
\ee
where we  denoted $C(w):=\|w\|_{\bar N,d+5} + \|w\|_{0,\bar N+3d+4}.$
On the other hand, by Proposition~\ref{l:19}, $|I_q(\vc)|\lappr_N L^{d+1}q^{-1}|\vc|^{-N}\|w\|_{N,2N+d+1}$ for every $N\in\N$. So, 
\be\non
|Y_2(\vc)|\lappr_N L|\vc|^{-N}\|w\|_{N,2N+d+1} \sum_{q>L|\vc|^{-M}}q^{-2} \lappr |\vc|^{-N+M} \|w\|_{N,2N+d+1}.
\ee
Thus,
\be\non
|\sigma_\infty^\vc(w)|\lappr_{\ga,N} \big(|\vc|^{-(1-\ga)(M+1)} + |\vc|^{-N+M}\big)
\big(\|w\|_{\bar N,\bar N+3d+4} + \|w\|_{N,2N+d+1}\big).
\ee  
By Lemma~\ref{l:30}, $|\sigma_\vc^*(A)|\lappr_\ga 1+ |\vc|^\ga$ if $\eta(\vc)=1$, so we get
\be\non
\sum_{\vc \ne 0:\,\eta(\vc)=1} |\sigma_\vc^*(A) \sigma_\infty^\vc(w)|\lappr_{\ga,N}
\|w\|_{\bar N,\bar N+3d+4} + \|w\|_{N,2N+d+1},
\ee
once $M$ and $N-M$ are sufficiently large and $\ga$ is sufficiently small. 
Choosing $M=d$, $N=2d+1$ and $\ga=1/(d+3)$, we get $\bar N=d^2(d+3)-2d$.
Together with \eqref{s_1-start} and \eqref{K(0)-est}, this implies the assertion of the proposition.

\section{Constants $\sigma(A,0)$ and $\sigma^*(A)$}
It is clear that our result provides an
		approximation to the series $N_L(w;A,m)$ through the singular
		integral $\sigma_\infty(w)$ only if the singular series $\sigma(A,m)$ or 
		$\sigma^*(A)$ are strictly positive. In fact,  the
		singular series is known to be strictly positive under a very
		general condition, namely, for non-singular forms of any
	degree that have non-singular solutions in $\R$ and in every p-adic  field (provided the singular series is absolutely convergent), see,
	e.g., Section 7 of \cite{Bi}. However, since the most interesting case in
	applications to mathematical physics is the case of the quadratic
	form $F_d(x,y)$ below, we give in this Appendix a direct elementary
	treatment of  the evaluation of the constants
		$\si(A, 0)$ for $d\geq 5$ and $\si^*(A)$ for $d=4$ in this case, independent of the general theory.

 In this section we consider the case when the quadratic form reads as
\be\lbl{F=F_d}
F(x,y)=\Sigma_{i=1}^{d/2} x_iy_i =: F_d(x,y)\qquad \mbox{where}\qquad  d=2s\ge 4
\ee
and  $x=(x_1,\dots,x_{s})$, $y=(y_1,\dots,y_{s})$.
Our goal is to evaluate the constants $\si(A, 0)$ for $d\geq 5$ and $\si^*(A)$ for $d=4$. Below we 
 use the usual notation for the relation that an integer $m$ 
divides  or non-divides an integer vector $s$ (e.g. $2 | (8,6)$ and $2\nmid (8,7)$).

In view of the definitions \eqref{eq:sigma_p}--\eqref{77}, our first aim is to compute the constants $\si_p(A,0)$. For a prime $p$ and $k\in\N$ let consider the set 
$$
S_p(k)=\{ (x,y) \!\!\! \mod p^{k}:\ F_d(x,y) = 0\!\!\! \mod p^{k}  \}
$$ 
and denote $ N_p(k):=\sharp S_p(k)$. Note that the set $S_p(k)$ and the constant $N_p(k)$ depend on $d$.
Then the constants $\si_p$ can be rewritten as
\be\label{sigma}
\si_p(d):=\si_p(A,0)=\lim_{k\to \infty}\frac{  N_p(k) }{p^{(d-1)k}}.
\ee
  This relation is mentioned in  \cite{HB}, p.\,199,
   without a proof; we  sketch  its rigorous derivation at
   the end of this appendix.
 
 Let $ {\mathcal N}_p(d):= N_p(1)$ be the number of
 $\F_p$--rational points on $\{F_d=0 \mod p\}$.

\begin{lemma}\lbl{l:s_p}
For any prime~$p,$
\begin{equation}\label{sip} \si_p(d) =\frac{ {\mathcal N}_p(d) -1 }{p^{d-1}-p^{1-d}}.
\end{equation}
\end{lemma}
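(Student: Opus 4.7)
The plan is to evaluate the limit \eqref{sigma} by a Hensel-type lifting recursion, partitioning solutions by their reduction modulo $p$. I would split $S_p(d;k)$ into the \emph{primitive} piece $T_p(d;k):=\{(x,y)\in S_p(d;k):(x,y)\not\equiv 0\pmod p\}$ and the \emph{imprimitive} piece $U_p(d;k):=S_p(d;k)\setminus T_p(d;k)$, with cardinalities $t_k$ and $u_k$. Primitivity is preserved under the reduction $\Z/p^k\to\Z/p^{k-1}$ and under its lifts, so the two pieces iterate independently.

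For the primitive part I would use the Taylor expansion
\[
F(\bar x+p^{k-1}u,\,\bar y+p^{k-1}v)\equiv F(\bar x,\bar y)+p^{k-1}(\bar x\cdot v+\bar y\cdot u)\pmod{p^k},\qquad k\ge 2,
\]
where the quadratic remainder $p^{2(k-1)}F(u,v)$ is absorbed mod $p^k$. If $F(\bar x,\bar y)=p^{k-1}m$, the lift condition reduces to the linear congruence $m+\bar x\cdot v+\bar y\cdot u\equiv 0\pmod p$ in $(u,v)\in\F_p^d$; the coefficient vector $(\bar y,\bar x)\bmod p$ is nonzero by primitivity, so exactly $p^{d-1}$ lifts work. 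This gives $t_k=p^{d-1}t_{k-1}$ for $k\ge 2$, and combined with $t_1=\mathcal N_p(d)-1$ yields $t_k=p^{(d-1)(k-1)}(\mathcal N_p(d)-1)$.

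For the imprimitive part I would write $(x,y)=(pa,pb)$ with a unique $(a,b)\in(\Z/p^{k-1})^d$; the identity $F(pa,pb)=p^2F(a,b)$ turns the congruence $F(x,y)\equiv 0\pmod{p^k}$ into $F(a,b)\equiv 0\pmod{p^{k-2}}$, a condition depending only on $(a,b)\bmod p^{k-2}$. Each solution mod $p^{k-2}$ has $p^d$ lifts to $(\Z/p^{k-1})^d$, so $u_k=p^d N_p(d;k-2)$ for $k\ge 2$. Combining the two counts produces the central recursion
\[
N_p(d;k)=p^{(d-1)(k-1)}\bigl(\mathcal N_p(d)-1\bigr)+p^d\,N_p(d;k-2),\qquad k\ge 2.
\]
Dividing by $p^{(d-1)k}$, the normalised sequence $a_k:=N_p(d;k)/p^{(d-1)k}$ satisfies the affine contraction $a_k=p^{1-d}(\mathcal N_p(d)-1)+p^{2-d}a_{k-2}$; since $|p^{2-d}|<1$ for $d\ge 3$, the even and odd subsequences converge to the unique common fixed point, which I would solve algebraically to obtain the closed form \eqref{sip}.

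The main obstacle is the Hensel step, specifically verifying carefully that the quadratic Taylor remainder is absorbed mod $p^k$ (which requires $k\ge 2$) and that primitivity is an invariant of the reduction tower so the splitting $T\sqcup U$ iterates consistently. To close the appendix's program I would also, as promised in the paper, sketch \eqref{sigma} itself from \eqref{eq:sigma_p}: using $\sum_{a\bmod p^k}e_{p^k}(aF(b))=p^k\cdot\mathbf 1_{p^k\mid F(b)}$ and grouping $a$ by $\gcd(a,p^k)=p^j$ gives $p^k N_p(d;k)=\sum_{j=0}^k p^{(k-j)d}S_{p^j}(0)$, equivalently $a_k=\sum_{j=0}^k p^{-jd}S_{p^j}(0)$, whose limit as $k\to\infty$ is precisely $\sigma_p(d)$.
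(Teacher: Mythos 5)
Your proof is correct, and the key ideas coincide with the paper's: Hensel lifting for the primitive part (linearization of $F$ around a primitive solution, giving the factor $p^{d-1}$), and the $p^2$-scaling identity $F(pa,pb)=p^2F(a,b)$ for the imprimitive part. Where you diverge is in the bookkeeping. The paper stratifies $S_p(k)$ by the exact $p$-adic valuation $j$ of $(x,y)$, proves $\sharp S_p(k,j)=(\mathcal N_p-1)p^{(d-1)(k-2j-1)}$ for $j\le\lfloor(k-1)/2\rfloor$, handles the deep tail $j>\lfloor(k-1)/2\rfloor$ by a crude $O(p^{dk/2})$ bound, and sums the geometric series. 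You instead package the imprimitive part as a single step $u_k=p^dN_p(d;k-2)$, which produces a two-term recursion
\[
N_p(d;k)=p^{(d-1)(k-1)}(\mathcal N_p(d)-1)+p^dN_p(d;k-2),
\]
and you solve the normalised affine contraction $a_k=p^{1-d}(\mathcal N_p-1)+p^{2-d}a_{k-2}$ by identifying its fixed point. This is a genuinely different (and arguably cleaner) organisation: the recursion absorbs the paper's tail term automatically into the base cases $a_0,a_1$, and the contraction argument replaces the explicit geometric summation while manifestly giving existence of the limit. The trade-off is that unrolling your recursion reproduces precisely the paper's sum over $j$, so the computational content is the same. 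For the verification of \eqref{sigma} you also use a slightly different route: grouping $a$ by $\gcd(a,p^k)=p^j$ to get $a_k=\sum_{i=0}^k p^{-di}S_{p^i}(0)$ directly, whereas the paper proceeds by induction on $k$ using the orthogonality relation \eqref{ep}; both are standard and correct.
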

{\it Proof.}  
For $j=0,1, \dots, k$ we define $S_p(k,j)$ as a set of $(x,y) \in S_p(k)$ such that 
$$
(x,y)  = p^j (x',y')\! \!\mod  p^k, \; \; \text{where} \;\;\;
p \nmid (x',y').
$$
So
$
S_p(k,0) = \{  (x,y) \in S_p(k):  p \nmid  (x,y) \}
$
and 
$
S_p(k,k) = \{(0,0)\}. 
$
Sets $S_p(k,j) $ and $S_p(k,j') $ with $j\ne j'$ do not intersect, and denoting $ {N}_p(k,j)=\sharp  {S}_p(k,j)$ we have 
$$
S_p(k)={\bigcup}^{k}_{j=0} {S}_p(k,j),\quad N_p(k)={\sum}^{k}_{j=0} {N}_p(k,j)\,.
$$  
In particular, 
$
 {N}_p(1,0)={\mathcal N}_p-1$ since $
  {N}_p(1,1)=1.
$
 We claim that
$$ 
{N}_p(k,0)= {N}_p(k-1,0) p^{(d-1)},
$$
and thus 
\be\lbl{Np(k,1)}
 {N}_p(k,0)= {N}_p(1,0) p^{(d-1)(k-1)}=\left({\mathcal N}_p-1\right) p^{(d-1)(k-1)}.
\ee
 Indeed, we argue by induction in $k$.  Let $k=2$ and $(x,y)  \in  {S}_p(2,0)$. Let us  write $(x,y)$ 
 as $(x_0+pa, y_0+pb)$  with $(x_0, y_0),\, (a,b) \in \F_p^d$. Then $p \nmid (x_0,y_0) $, so $(x_0, y_0)\in  {S}_p(1,0)$.
  Let  us now fix any $(x_0, y_0)\in  {S}_p(1,0)$ and look for $(a,b)\in  \F_p^{d}$ 
 such that $(x_0+pa, y_0+pb)\in  {S}_p(2,0)$. 
 Since    $p^2 F(a,b) =0 $ mod $p^2$ and 
  $p \nmid (x_0,y_0) $, then relation $F(x,y) =0 $ mod $ p^2$ implies 
     a    non-trivial  linear equation on $(a,b) \in \F_p^d$. 
 So each $(x_0,y_0) \in S_p(1,0)$ 
 generates exactly $p^{d-1}$ vectors $(x,y)\in  {S}_p(2,0)$, which  proves the formula for $k=2$.  This argument remains valid for any $k\ge 2$,
 by representing $(x,y) \!\!\! \mod p^{k}$ in the form $(x_0+p^{k-1}a, y_0+p^{k-1}b)$  with $(x_0, y_0)\in\F_{p^{k-1}}^d$ and $(a,b) \in \F_p^d$.

 Let now $(x,y)\in  {S}_p(k,j)$ with $j\geq 1$. Then $(x,y)=p^{j}(x',y') \! \!\mod  p^k$, where $p \nmid  (x',y')$ and $(x',y')$
  satisfies $p^{2j}F(x',y')=0 \mod p^k$. Thus  $(x',y')\in  {S}_p(k-2j,0)$, if $j\le \frac{k-1}{2}$,  i.e. 
  $
  j\le \lfloor \frac{k-1}{2}\rfloor = : j_k.
  $
  The correspondence $(x,y) \mapsto (x',y')$ is a well defined mapping from  $ {S}_p(k,j)$ to $ {S}_p(k-2j,0)$. Indeed, if $(x_1, y_1)\sim(x,y)$ in
   $ {S}_p(k,j)$, then $p^{k-j} |   \big((x'_1, y'_1) - (x',y')\big)$, so  $(x'_1, y'_1) \sim (x',y')$ in $ {S}_p(k-2j,0)$. Since this map is obviously 
   surjective, then it is   
   a bijection of $ {S}_p(k,j)$ onto $ {S}_p(k-2j,0),$ which in view of \eqref{Np(k,1)}     implies
$$
 {N}_p(k,j)= {N}_p(k-2j,0)=\left({\mathcal N}_p-1\right) p^{(d-1)(k-2j-1)}.
$$ 
By \eqref{Np(k,1)} this formula  as well holds for $j=0$.

Any $(x,y)$  such that \, $p^{j} | (x,y)$  with $j\ge j_k+1$ 
satisfies  $F(x,y) = 0 \!\!\mod p^{k}$. Thus 
$$
\sum_{j={   j_k+1}}^{k}\! {N}_p(k,j)= \sharp\{(x,y) \!\!\!\mod p^k\!: (x,y) = 0\! \!\!\!\mod p^{{j_k+1}}\}= p^{d(k-{ j_k} -1)}\le p^{dk/2}.
$$
Therefore
$$
N_p(k)=\left({\mathcal N}_p-1\right)p^{(d-1)(k-1)} {\sum_{j=0}^{{j_k}}}p^{-2 j (d-1)}+O(p^{dk/2}).
$$
So
$$ \si_p= \lim_{k\to \infty}\frac{N_p(k)}{p^{(d-1)k}}=\left({\mathcal N}_p-1\right) p^{1-d } \sum^{\infty}_{j=0} p^{-2 j(d-1)} 
 =\frac{p^{1-d } ({\mathcal N}_p-1 ) }{1-p^{2 -2d}} \,,
$$
which proves \eqref{sip}.
\qed
\smallskip

Let then deduce a  formula for ${\mathcal N}_p(d)$ using induction in $d/2=s$. 
	For $d=2$ we have
	$
	{\mathcal N}_p(2)=\sharp\{(x,y) \in \F_p^2 : xy=0 \!\!\!\ \mod p\}=2p-1$. 
Next, 
\[\begin{split}
{\mathcal N}_p(d+2)=\sharp \{ \hbox{\rm solutions with }   x_{s+1}=0\}+\sharp \{ \hbox{\rm solutions with } x_{s+1}\ne 0\}\\
 =p{\mathcal N}_p(d )+(p-1)p^{d}. \hskip 5 cm
\end{split}
\]
Therefore for any even $d=2s\ge2$, 
$$
{\mathcal N}_p(d)=p^{d-1}+p^{s} -p^{s-1},
$$ 
and thus 
$$\; \si_p(d)=\frac{1+p^{1-s}-p^{-s}-p^{1-d}}{1-p^{2-2d}}=\frac{(1+p^{1-s})(1-p^{-s})}{1-p^{2-2d}}  .$$
Since by Euler's formula
$
\prod_p (1-p^{-l}) = 1/\zeta(l)$ for any $l>1
$, 
then in the case $d=4$ we get from \eqref{77} and the obtained formula for $\si_p(d)$ that 
$$
 \si(A,0;d=4)=\prod_{  p}\si_p(4)=\frac{\zeta(6)}{\zeta(2) }\prod_{  p}\big(1+p^{-1}  \big).
$$
This 
 does not converge, but
$$\si^*{(A;d=4)}=\prod_{  p}(1-p^{-1})\si_p(4)=\frac{\zeta(6)}{\zeta(2)^2}=\frac{4\pi^2}{105}\simeq  0.376,$$ 
converges. Further, 
$$\si{(A,0;d=6)} =\frac{\zeta(2)\zeta(10) }{\zeta(3)\zeta(4)}\simeq 1.265,
\quad \si{(A,0;d=8)} =\frac{\zeta(3)\zeta(14) }{\zeta(4)\zeta(6)}\simeq 1.092,$$
whereas
$$
  1<\si{ (A,0;d)} =\frac{\zeta(s-1)\zeta(2d-2)
  }{\zeta(s)\zeta(d-2)}=\frac{(1+2^{1-s})(1+2^{2-4s})}{(1+2^{-s})(1+2^{2-2s})}
  + o(1)=1+ o(1)
$$
tends to 1 when $d=2s\ge 10$ grows.

  It remains to prove \eqref{sigma}. By definition
   \eqref{eq:sigma_p}, $\si_p=\sum_{t=0}^\infty p^{-dt}S_{p^t}(\bf{ 0})$,
where
$$S_{p^t}({\bf{0}})={\sum_{a\, \mathrm{mod}\,p^t}}^*\sum_{ {\bf{b}}\, \mathrm{mod}\,p^t} e_{p^t}(aF(\bf{b})) .$$
Note that $ p^{-dt}S_{p^t}({\bf{ 0}}) =1$  for $t=0$, while
for $t=1$: 
\begin{equation*}
  \begin{split}
    p^{-d}S_{p}({\bf{ 0}})&=p^{-d}\sum_{a=1 }^{p-1}\sum_{ {\bf{b}}\, \mathrm{mod}\,p} e_{p}(aF({\bf{b}})) \\
 & = p^{-d}\sum_{a=1 }^{p-1}\sum _{ {\bf{b}}\, \mathrm{mod}\,p,
  \,p|F({\bf{b}}) }1+p^{-d}\sum_{a=1 }^{p-1}\sum _{{\bf{b}}\,
  \mathrm{mod}\,p,\,p\nmid F({\bf{b}})  }e_{p}(aF({\bf{b}}))\\
&=p^{-d}(p-1){\mathcal N}_p(d)+p^{-d}(-1)(p^d-{\mathcal
      N}_p(d))=p^{1-d} {\mathcal N}_p(d)-1\ ,
  \end{split}
\end{equation*}
since 
\begin{equation}\label{ep}\sum_{a=1 }^{m-1}e_{m}(an)=-1\ ,\end{equation} 
for any $n,m\neq 0$ such that $(m,n)=1$. Therefore
$ \sum_{t=0}^1 p^{-dt}S_{p^t}({\bf 0}) =p^{1-d}  {  N}_p(1).$

We proceed now by induction, supposing that, for $k\ge 1$,
$$
\sum_{t=0}^k p^{-dt}S_{p^t}({\bf 0}) =p^{(1-d)k}  {  N}_{p}(k)\ .
$$
Then we write
$$  S_{p^{k+1}}({\bf {0}}) = {\sum_{a\, \mathrm{mod}\,p^{k+1}}}^*\sum_{{\bf b}\, \mathrm{mod}\,p^{k+1}} e_{p^{k+1}}(aF({\bf b})) =  \Sigma_1 + \Sigma_2+  \Sigma_3\ ,$$
where we have defined
\begin{equation*}
  \begin{split}
    \Sigma_1&:={\sum_{a\, \mathrm{mod}\, p^{k+1}}}^{\!\!\!\!\!*} \sum_{ p^{k+1}|F({\bf{b}})}1 = p^{k}(p-1)  {
  N_{p}(k+1)}\ ,\\
    \Sigma_2&:={\sum_{a\, \mathrm{mod}\,
    p^{k+1}}}^{\!\!\!\!\!*}\sum_{F({\bf{b}})=lp^k}
    e_{p }(al) = 
-p^{k}(p^d{  N}_{p}(k)-{  N}_{p}(k+1)    )\ ,\\
    \Sigma_3&:= {\sum_{a\, \mathrm{mod}\,
    p^{k+1}}}^{\!\!\!\!\!\!*} \,\sum_{s=0}^{k-1}\sum_{F({\bf{b}})=lp^s}
    e_{p^{k+1-s} }(al) = 0\ ,
  \end{split}
\end{equation*}
  with a non-zero
   $l =l(b) $ such that $p\nmid l$. The equalities above
  essentially  follow by a repeated application of \eqref{ep}.

This way we have got
$$
\frac{S_{p^{k+1}}({\bf {0}})}{ p^{d(k+1)}} =\frac{p^{k+1}
 {N}_{p}(k+1) -  p^{d+k}  {  N}_{p}(k)  }{ p^{d(k+1)}}
=\frac{  {N}_{p}(k+1)}{p^{(d-1)(k+1)}} -
\frac{  {N}_{p}(k)}{p^{(d-1)k}}\ ,
$$
which completes the induction step, thus proving \eqref{sigma}.

\end{document}